\documentclass[12pt,leqno]{article}
\usepackage{amsfonts}
\pagestyle{plain}
\usepackage{amsmath, amsthm, amsfonts, amssymb, color}
\usepackage{mathrsfs}
\usepackage{url}
\usepackage{xcolor}
\setlength{\topmargin}{0cm} \setlength{\oddsidemargin}{0cm}
\setlength{\evensidemargin}{0cm} \setlength{\textwidth}{16.5truecm}
\setlength{\textheight}{22.5truecm}

\newtheorem{thm}{Theorem}[section]

\newtheorem{lem}[thm]{Lemma}

\newtheorem{rem}[thm]{Remark}
\theoremstyle{definition}

\newcommand{\scr}[1]{\mathscr #1}
\definecolor{wco}{rgb}{0.5,0.2,0.3}

\numberwithin{equation}{section} \theoremstyle{remark}

\newcommand{\ua}{\uparrow}

\title{{\bf  Wasserstein Convergence for Empirical Measures of Subordinated Dirichlet Diffusions on Riemannian Manifolds }
}

\author{Huaiqian Li\footnote{Email: {\color{blue}huaiqianlee@gmail.com}}
\quad Bingyao Wu\footnote{Email: {\color{blue}bingyaowu@163.com}}
  \vspace{2mm}
\\
{\footnotesize Center for Applied Mathematics, Tianjin University, Tianjin 300072, P. R. China}
}

\date{}

\begin{document}
\allowdisplaybreaks
\def\R{\mathbb R}  \def\ff{\frac} \def\ss{\sqrt} \def\B{\mathbf
B}\def\TO{\mathbb T}
\def\I{\mathbb I_{\pp M}}\def\p<{\preceq}
\def\N{\mathbb N} \def\kk{\kappa} \def\m{{\bf m}}
\def\ee{\varepsilon}\def\ddd{D^*}
\def\dd{\delta} \def\DD{\Delta} \def\vv{\varepsilon} \def\rr{\rho}
\def\<{\langle} \def\>{\rangle} \def\GG{\Gamma} \def\gg{\gamma}
  \def\nn{\nabla} \def\pp{\partial} \def\E{\mathbb E}
\def\d{\text{\rm{d}}} \def\bb{\beta} \def\aa{\alpha} \def\D{\scr D}
  \def\si{\sigma} \def\ess{\text{\rm{ess}}}
\def\beg{\begin} \def\beq{\begin{equation}}  \def\F{\scr F}
\def\Ric{{\rm Ric}} \def\Hess{\text{\rm{Hess}}}
\def\e{\text{\rm{e}}} \def\ua{\underline a} \def\OO{\Omega}  \def\oo{\omega}
 \def\ttt{\tilde}\def\tt{\theta}
\def\cut{\text{\rm{cut}}} \def\P{\mathbb P} \def\ifn{I_n(f^{\bigotimes n})}
\def\C{\scr C}      \def\aaa{\mathbf{r}}     \def\r{r}
\def\gap{\text{\rm{gap}}} \def\prr{\pi_{{\bf m},\varrho}}  \def\r{\mathbf r}
\def\Z{\mathbb Z} \def\vrr{\varrho} \def\ll{\lambda}
\def\L{\scr L}\def\Tt{\tt} \def\TT{\tt}\def\II{\mathbb I}
\def\i{{\rm in}}\def\Sect{{\rm Sect}}  \def\H{\mathbb H}
\def\M{\scr M}\def\Q{\mathbb Q} \def\texto{\text{o}} \def\LL{\Lambda}
\def\Rank{{\rm Rank}} \def\B{\scr B} \def\i{{\rm i}} \def\HR{\hat{\R}^d}
\def\to{\rightarrow}\def\l{\ell}\def\iint{\int}
\def\EE{\scr E}\def\Cut{{\rm Cut}}\def\W{\mathbb W}
\def\A{\scr A} \def\Lip{{\rm Lip}}\def\S{\mathbb S}
\def\BB{\mathbf B}\def\Ent{{\rm Ent}} \def\i{{\rm i}}\def\itparallel{{\it\parallel}}
\def\g{{\mathbf g}}\def\Sect{{\mathcal Sec}}\def\T{\mathcal T}\def\V{{\bf V}}
\def\PP{{\bf P}}\def\HL{{\bf L}}\def\Id{{\rm Id}}\def\f{{\bf f}}\def\cut{{\rm cut}}
\def\Ss{\mathbb S}
\def\BL{\scr A}\def\Pp{\mathbb P}\def\Pp{\mathbb P} \def\Ee{\mathbb E}
\def\hp{\hat{\phi}}\def\vv{\varepsilon}\def\ww{\wedge}
\maketitle

\begin{abstract}
We investigate long-time behaviors of empirical measures associated with subordinated Dirichlet diffusion processes on a compact Riemannian manifold $M$ with boundary $\partial M$ to some reference measure, under the quadratic Wasserstein distance. For any initial distribution not concentrated on $\partial M$, we obtain the rate of convergence and even the precise limit for the conditional expectation of the quadratic Wasserstein distance conditioned on the process killed upon exiting $M\setminus\partial M$. In particular, the results coincide with the recent ones proved by F.-Y. Wang in \cite{eW2} for Dirichlet diffusion processes.
 \end{abstract} \noindent
{\bf MSC 2020:}  primary  60D05, 58J65; secondary 60J60, 60J76\\
\noindent
{\bf Keywords:}  Empirical measure, subordinated diffusion process, Riemannian manifold, Wasserstein distance, eigenvalue
 \vskip 2cm

\section{Introduction and main results}
Let $M$ be a $d$-dimensional connected compact Riemannian manifold with smooth  boundary $\pp M$ and $\scr{P}$ be the class of all Borel probability measures on $M$.  Let $V\in C^2(M)$ such that $\mu(\d x)=\e^{V(x)} \d x$ and $\mu\in\scr{P}$, where $\d x$ stands for the Riemannian volume measure on $M$. Let $(X_t)_{t\geq0}$ be the diffusion process generated by $\mathcal{L}:=\DD+\nn V$ with the hitting time
$$\tau:=\inf\{t\ge 0:X_t\in\pp M\},$$
where $\Delta$ and $\nabla$  are the Laplace--Beltrami operator and the gradient operator on $M$, respectively.

Let $x\in M$ and $\nu\in\scr{P}$. We use the usual notation $\P^\nu(\cdot)=\int_M \P^x(\cdot)\,\nu(\d x)$,  where $\P^x$ stands for the law of the corresponding process with initial point $x$.  The expectations $\E^x$ and $\E^\nu$ are associated with $\P^x$ and $\P^\nu$, respectively. As usual, for every $p\in[1,\infty]$, we use $L^p(\nu):=L^p(M,\nu)$ to denote the $L^p$ function space with norm $\|\cdot\|_{L^p(\nu)}$.  For every $f\in L^1(\nu)$, let $\nu(f)$ be the shorthand notation for $\int_M f\,\d\nu$.

For every $q\in(0,\infty)$, the Wasserstein (or Kantorovich) distance with power $q$, induced by the Riemannian distance $\rr$ on $M$, is defined as
$$\W_q(\mu_1,\mu_2)= \inf_{\pi\in \C(\mu_1,\mu_2)} \bigg(\int_{M\times M} \rr(x,y)^q \pi(\d x,\d y) \bigg)^{\frac{1}{q\vee1}}\in[0,\infty],\quad \mu_1,\mu_2\in \scr P,$$
where $q\vee1:=\max\{q,1\}$ and $\C(\mu_1,\mu_2)$ is the class of all Borel probability measures on the product space $M\times M$ with
marginal distributions $\mu_1$ and $\mu_2$, respectively. In the sequel, we will mainly focus on $\W_2$, which is also called quadratic Wasserstein distance. For instance, see \cite[Chapter 5]{ChenMF2004}, \cite{Villani2008} and \cite{Santam2015} for a comprehensive study on the Wasserstein distance and its connection to the optimal transport theory.

Let $\mathbb{N}=\{1,2,\cdots\}$ and $\mathbb{N}_0=\mathbb{N}\cup\{0\}$.  Let $\phi_m,\lambda_m$, $m\in\mathbb{N}_0$, be Dirichlet eigenfunctions and Dirichlet eigenvalues of the operator $-\mathcal{L}$ in $L^2(\mu)$ respectively (see Section 2 below for more details). Set
$$\mu_0:=\phi_0^2\mu.$$
It is clear that $\mu_0\in\scr{P}$. Let $\delta_\cdot$ stands for the Dirac measure. Consider the family of empirical measures associated with the diffusion process $(X_t)_{t\geq0}$, i.e.,
$$\mu_t:=\frac{1}{t}\int_0^t \delta_{X_s}\,\d s,\quad t>0,$$
which are random probability measures. In a recent paper \cite{eW2}, F.-Y. Wang studied the rate of convergence of $\E^\nu[\W_2(\mu_t,\mu_0)^2|t<\tau]$ as $t$ tends to $\infty$, where the initial distribution $\nu$ belongs to $\scr P_0$. Here and in what follows,
$$\scr P_0:=\{\nu\in\scr P:\ \nu(\mathring{M})>0\},$$
and $\mathring{M}:=M\setminus\partial M$, the interior of $M$.

The main purpose of the present work is to extend the main results in \cite{eW2} to a large class of Markov processes subordinated to $(X_t)_{t\geq0}$.

In order to introduce our main results, we should begin by recalling some basics that are of importance in the sequel. A continuous function $B:[0,\infty)\rightarrow[0,\infty)$ is called a Bernstein function if, $B$ is infinitely differentiable in $(0,\infty)$ and for every $n\in\mathbb N$,
 $$ (-1)^{n} \ff{\d^n}{\d r^n}B(r)\leq0,\quad  r>0.$$
The following class of  Bernstein functions is of importance, i.e.,
\beg{align*}
\mathbf{B}:= \big\{B: B\text{\ is\ a\ Bernstein\ function\ with } B(0)=0, B'(0)>0\big\}.
\end{align*}
It is known that, for every $B\in {\bf  B}$, there exists a unique subordinator $(S_t^B)_{t\geq0}$, which means an increasing process $S^B: [0,\infty)\rightarrow [0,\infty)$ with stationary, independent increments and $S_0^B=0$ almost surely such that $B$ is the Laplace exponent of $(S_t^B)_{t\geq0}$, i.e.,
\begin{equation}\label{LT}
\E \e^{-\ll S_t^B}= \e^{-t B(\ll)},\ \ t,\ll \ge 0.
\end{equation}
In particular, when $B(\ll)=\ll^\aa$, $\aa\in(0,1)$, the  corresponding process $(S_t^B)_{t\ge 0}$ is also called $\aa$-stable subordinator.
For any $\aa\in (0,1]$, let
$$\BB^\aa:=\Big\{B\in \BB:\ \liminf_{\ll\to\infty}  \ll^{-\aa} B(\ll)>0\Big\},\quad \BB_\aa:= \Big\{B\in \BB:\ \limsup_{\ll\to\infty} \ll^{-\aa} B(\ll)<\infty\Big\}.$$
Typical examples belong to the classes $\BB^\aa$ and $\BB_\aa$ are $B(\ll)=\ll$ and $B(\ll)=\ll^\alpha$ with $\alpha\in(0,1)$. The former goes back to the situation of \cite{eW2}, and the later corresponds to the setting of the classical $2\alpha$-stable process on $M$ killed upon exiting $\mathring{M}$ (see e.g. \cite{SV2003}). In addition, there are many other types of Bernstein functions belong to the class $\BB^\aa$ or $\BB_\aa$ (see e.g. \cite[Chapter 16]{SSV2012}).

Let $B\in {\bf B}$. As defined above,  let $(S^B_t)_{t\ge 0} $ be the unique subordinator independent of $(X_t)_{t\geq0}$. The Markov process on $M$ generated by $-B(-\mathcal{L})$ is denote by $(X_t^B)_{t\geq0}$, which can be regarded as the time-changed process of  $(X_t)_{t\geq0}$, i.e.,
 $$X_t^B= X_{S^B_t\wedge\tau},\quad t\ge 0.$$
We call $(X_t^B)_{t\geq0}$ the Dirichlet diffusion process subordinated to $(X_t)_{t\geq0}$ or $B$-subordinated Dirichlet diffusion process. For more details on Bernstein functions and subordinated processes, refer to \cite{SSV2012,Bertoin97,BSW} for instance.

Let
$$\si_\tau^B:=\inf\{t\ge0:\ S_t^B>\tau\},$$
which can be regarded as the hitting time of the $B$-subordinated Dirichlet diffusion process $(X_t^B)_{t\geq0}$ at the boundary $\partial M$. We consider the family of empirical measures associated with $(X_t^B)_{t\geq0}$, defined by
 $$\mu_t^B=\ff 1 t \int_0^t \dd_{X_{s}^B}\,\d s,\quad t>0.$$
Recall that $\mu_0=\phi_0^2\mu$. As announced earlier, we will investigate the rate of convergence of $\E^\nu[\W_2(\mu_t^B,\mu_0)^2|t<\si_\tau^B]$ as $t\to\infty$ for every initial distribution $\nu\in\scr{P}_0$. Note that the reason to consider $\nu$ from $\scr{P}_0$ instead of $\scr{P}$ is to avoid $\Pp^\nu(\si_\tau^B>t)=0$.

Our first main result is stated in the next theorem.
\beg{thm}[Upper  bounds] \label{T1.2} Let $\aa\in (0,1]$
and $B\in \BB^\aa$. Then for any $\nu\in\scr{P}_0$, the following assertions hold.
\beg{enumerate}
 \item[$(1)$] If $d< 2(\alpha+1)$, then there exists a constant $c>0$, such that
\begin{equation}\label{0T1.2}
 \limsup_{t\to \infty}  \sup_{T\ge t}   \Big\{t \E^\nu [\W_2(\mu^B_{t},\mu_0)^2|T<\si_\tau^B]\Big\}\le
 c\sum_{m=1}^\infty\ff 2 {(\ll_m-\ll_0) [B(\ll_m)-B(\ll_0)]}<\infty.
\end{equation}
If $\aa\in(\ff 1 2,1]$ and $d<6\aa-2$,  then \eqref{0T1.2} holds with $c=1$.

\item[$(2)$] If $d>2(1+\aa)$, then
\begin{equation}\label{2T1.2} \limsup_{t\to \infty}  \sup_{T\ge t}    \Big\{t^{\ff{2}{d-2\aa}} \E^\nu [\W_2(\mu^B_{t},\mu_0)^2|T<\si_\tau^B]\Big\}<\infty.
\end{equation}
\item[$(3)$] If $d=2(1+\aa)$, i.e., either $(\aa,d)=(1,4)$ or $(\aa,d)=(\ff 1 2,3)$, then
\begin{equation}\label{3T1.2} \limsup_{t\to \infty}  \sup_{T\ge t}    \Big\{ \ff t{\log t}  \E^\nu [\W_2(\mu^B_{t},\mu_0)^2|T<\si_\tau^B]\Big\}<\infty.
\end{equation}
\end{enumerate}
   \end{thm}
\begin{rem}
(1) If $\aa\in (0,1]$, $B\in\mathbf{B}_\alpha$ and $d\geq2(1+\alpha)$, then the first inequality of \eqref{0T1.2} trivially holds. Indeed, by \eqref{equ-B}, \eqref{BR} and the fact that $B(\ll)\leq k_1 \ll^\alpha+k_2$, $s\geq0$, for some constant $k_1>0$ and $k_2\geq0$ (see e.g. \cite[Section 4.2]{eWW}), we have
$$\sum_{m=1}^\infty\ff 1 {(\ll_m-\ll_0) [B(\ll_m)-B(\ll_0)]}\geq C\sum_{m=1}^\infty\frac{1}{m^{2(1+\alpha)/d}}=\infty,$$
where $C$ is a positive constant.

(2) Note that, in the second assertion of Theorem \ref{T1.2}(1), there is no need to consider the case when $\alpha\in(0,\frac{1}{2}]$ since $6\alpha-2\leq1$.
\end{rem}

To present our second main result, we first recall that the boundary $\partial M$ is convex means that the second fundamental form of $M$ is nonnegative definite.
\beg{thm}[Lower bounds]\label{T1.1}
Let $\nu\in\scr{P}_0$. The following assertions hold.
\beg{enumerate} \item[$(1)$] Let $B\in\mathbf{B}$.
There exists a constant $c\in (0,1]$ such that
\begin{equation}\label{1T1.1}
\liminf_{t\to \infty}   \Big\{t \inf_{T\ge t}\E^\nu [\W_2(\mu^B_{t},\mu_0)^2|T<\si_\tau^B]\Big\}
\ge   c  \sum_{m=1}^\infty\ff 2 {(\ll_m-\ll_0) [B(\ll_m)-B(\ll_0)]}.
\end{equation}
Moreover, if $\pp M$ is convex, then \eqref{1T1.1} holds with $c=1$.

\item[$(2)$] Let $B\in \BB_\aa$   for some  $\aa\in (0,1]$. If $d>2(1+\aa)$, then  for any $p>0$,
\begin{equation}\label{2T1.1}  \liminf_{t\to \infty}    \Big\{t^{\ff 2{d-2\aa}}\inf_{T\ge t}\big( \E^\nu [\W_p(\mu^B_{t},\mu_0)|T<\si_\tau^B]\big)^{\ff 2{p\land 1}} \Big\}>0.\end{equation}
\end{enumerate}
   \end{thm}
\begin{rem}
From the proof of Theorem \ref{T1.1} in Section 4, we should point out that the assumption $B\in \mathbb{B}$ in \cite[Theorem 1.1(1)]{eWW} is not necessary and it can be weakened by $B\in\mathbf{B}$, where
$$\mathbb{B}:= \left\{B\in {\bf B}:\  \int_1^\infty s^{\ff d 2 -1}\e^{-r B(s)}\,\d s<\infty\mbox{ for all }r>0\right\}.$$
Obviously, $\mathbb{B}\subset\mathbf{B}$; however, the converse is not true. For instance,  letting $\aa\in(0,1)$ and setting $B(\ll):=1-(1+\ll)^{\aa-1}$ for any $\lambda\geq0$, we can easily verify that $B$ belongs to $\BB$ but it is not in $\mathbb{B}$.
\end{rem}

As an immediate consequence of Theorems \ref{T1.1} and \ref{T1.2}, we have the following corollary.
\beg{cor}\label{C1.3} Let $\nu\in\scr{P}_0$.
\beg{enumerate} \item[$(1)$] Suppose that $\partial M$ is convex. If $\alpha\in(\frac{1}{2},1]$, $B\in \BB^\aa$ and   $d<6\alpha-2$, then
\begin{equation*}\begin{split}
\lim_{t\to \infty}   \Big\{t \inf_{T\ge t}\E^\nu [\W_2(\mu^B_{t},\mu_0)^2|T<\si_\tau^B]\Big\}
&=\lim_{t\to \infty}   \Big\{t \sup_{T\ge t}\E^\nu [\W_2(\mu^B_{t},\mu_0)^2|T<\si_\tau^B]\Big\}\\
&=\sum_{m=1}^\infty\ff 2 {(\ll_m-\ll_0) [B(\ll_m)-B(\ll_0)]}<\infty.
\end{split}\end{equation*}

\item[$(2)$] If $\alpha\in(0,1]$, $B\in \BB^\aa\cap \BB_\aa$ and $d>2(\alpha+1)$, then for every $p\in(0,\alpha)$, there exist constants $c\geq c_p>0$ such that
\begin{equation*}\begin{split}
c_pt^{-\frac{2}{d-2\alpha}}&\leq \inf_{T\ge t}\big(\E^\nu [\W_p(\mu^B_{t},\mu_0)|T<\si_\tau^B]\big)^{2/p}
\leq \inf_{T\ge t} \E^\nu [\W_2(\mu^B_{t},\mu_0)^2|T<\si_\tau^B] \\
&\leq \sup_{T\ge t} \E^\nu [\W_2(\mu^B_{t},\mu_0)^2|T<\si_\tau^B]\leq c t^{-\frac{2}{d-2\alpha}},
\end{split}\end{equation*}
for all big enough $t>0$.

\item[$(3)$] If $\alpha\in(0,1]$, $B\in \BB^\aa$ and $d=2(\alpha+1)$, then there exists a constant $c>0$ such that
$$\sup_{T\ge t} \E^\nu [\W_2(\mu^B_{t},\mu_0)^2|T<\si_\tau^B]\leq c t^{-1}\log t,$$
for all big enough $t>0$.
 \end{enumerate}
\end{cor}

Some further remarks on the results above are in order.
\begin{rem}
(i) In particular, if $B(\ll)=\ll$ for every $\ll\geq0$, then results presented in Theorems \ref{T1.2} and \ref{T1.1} turn out to be exactly the ones in \cite[Theorem 1.1]{eW2}.

(ii) In the same setting of the present paper but with a distinct target, we studied recently the rate of convergence of $\W_2(\mu_t^{B,\nu},\mu_0)^2$ in \cite{LiWu2204}, where $(\mu_t^{B,\nu})_{t>0}$ is a family of conditioned empirical measures defined by
$$\mu_t^{B,\nu}=\E^\nu\left(\left.\ff 1 t\int_0^t \dd_{X_s^B}\,\d s \right|\si_\tau^B>t\right).$$
From \cite[Theorems 1.1 and 1.3]{LiWu2204},  as $t\rightarrow\infty$, we see that $\W_2(\mu_t^{B,\nu},\mu_0)^2$ behaves of order $t^{-2}$, which is faster than the decay order of $\E^\nu [\W_2(\mu^B_{t},\mu_0)^2|t<\si_\tau^B]$ obtained in Theorems \ref{T1.2} and \ref{T1.1} above. The order of decay in the later coincides with the one of $\E^\nu[\W_2(\mu_t^B,\mu)^2]$ proved recently in \cite[Theorems 1.1 and 1.2]{eWW} (see also \cite[Theorems 2.3 and 2.7]{LiWu2201} for extensions on noncompact Riemannian manifolds), where $(\mu_t^B)_{t>0}$ is a family of empirical measures associated with the $B$-subordinated (reflected) diffusion process $(Y_t^B)_{t\geq0}$ on compact Riemannian manifolds (with boundary), i.e.,
$$\mu_t^B:=\ff 1 t \int_0^t\delta_{Y_s^B}\,\d s,\quad t>0,$$
and $\mu$ is the invariant probability measure  of $(Y_t^B)_{t\geq0}$.

(iii) In the critical dimension case, i.e., $d=2(\alpha+1)$, as $t\rightarrow\infty$, the lower bound estimate on the rate of convergence of $\E^\nu [\W_2(\mu^B_{t},\mu_0)^2|t<\si_\tau^B]$ with the matching order $t^{-1}\log t$ is a challenging problem. The problem is even open in the particular $B(\ll)=\ll$ case, i.e., in the setting of diffusion processes generated by $\mathcal{L}$ on closed $4$-dimensional Riemannian manifolds; however, see \cite[Theorem 1.3]{eWZ} for a partial answer on the torus $\mathbb{T}^4$.
\end{rem}

In the literature, besides the works cited above, let us mention the very recent papers \cite{eW1} and \cite{eW3} on quantifying the long time asymptotic behavior of empirical measures associated with diffusion processes respectively on compact and on noncompact Riemannian manifolds under the Wasserstein distance. See also \cite{eW4} for further investigations on this subject for SPDEs. We should mention that the study on asymptotic behaviours of empirical measures associated with i.i.d. random variables under Wasserstein distances, especially on estimating the rate of convergence, has been attracted a lot of attentions over years; see \cite{FG2015,BLG2014,DSS2013,GL2000} to name a few. See also the recent breakthrough \cite{eAST} on the optimal matching problem and related papers \cite{Zhu,Bor2021,BoLe2021,LedZhu2021,St2021,BL2019,Led2017} for instance. Chapter 4 of the recent book \cite{Tal2021} is also recommended.

The remainder of the paper is organized as follows. In Section 2, we gather some notations and known results on the Dirichlet eigenvalue, the Dirichlet eigenfunction, the Dirichlet diffusion semigroup and the Dirichlet heat kernel, as well as the subordinated case. Sections 3 and 4 serve to provide proofs of Theorem \ref{T1.2} and Theorem \ref{T1.1}, respectively. An appendix is included to present some detailed calculations for the reader's convenience. Let us simply mention that, even if the presentation of our work has some similarities to \cite{eW2}, new ideas are necessary in order to overcome problems in the present non-local setting.

\section{Preparations}
The results presented in this section are mainly borrowed from \cite[Sections 1 and 2]{eW2} and \cite[Section 2]{LiWu2204}. For more details, one should consult \cite{Wang2014,Ouhabaz05,Davies89,Chavel} for instance.

It is well known that the diffusion operator $-\mathcal{L}$ defined on $M$ has only discrete spectrum, which consists of an increasing sequence $(\lambda_m)_{m\in\mathbb{N}_0}$ of nonnegative eigenvalues of $-\mathcal{L}$, counting multiplicities, such that $\lambda_m\rightarrow\infty$ as $m$ tends to $\infty$. Let $(\phi_m)_{m\in\mathbb{N}_0}$ be the complete orthonormal system in $L^2(\mu)$ such that, for each $m\in\mathbb{N}_0$, $\phi_m$ is an eigenfunction corresponding to $\lambda_m$ satisfying the Dirichlet boundary condition, i.e.,
$$-\mathcal{L}\phi_m=\lambda_m\phi_m,\quad\quad \phi_m|_{\partial M}=0.$$
For each $m\in\mathbb{N}_0$, we call $\lambda_m$ and $\phi_m$ the Dirichlet eigenvalue and the Dirichlet eigenfunction of $-\mathcal{L}$, respectively. Without loss of generality, we assume that $\phi_0>0$ in $\mathring{M}$.  It is also well known that $\lambda_0>0$ and
\begin{equation}\label{EIG}
\|\phi_m\|_\infty\leq \alpha_0\sqrt{m},\quad \alpha_0^{-1}m^{\frac{2}{d}}\leq\lambda_m-\lambda_0\leq\alpha_0 m^{\frac{2}{d}},\quad\quad m\in\mathbb{N},
\end{equation}
for some constant $\alpha_0>1$; moreover,
\begin{equation}\label{PHI}
\|\phi_0^{-1}\|_{L^p(\mu_0)}<\infty,\quad p\in[1,3).
\end{equation}

Let $(p_t^D)_{t>0}$ and $(P_t^D)_{t\geq0}$ be the Dirichlet heat kernel and the Dirichlet diffusion semigroup corresponding to $\mathcal{L}$, respectively.
It is well known that $p_t^D$ admits the following expansion, i.e.,
\begin{equation}\label{DHK}
p_t^D(x,y)=\sum_{m=0}^\infty \e^{-\lambda_m t}\phi_m(x)\phi_m(y),\quad t>0,\,x,y\in M,
\end{equation}
and hence, the Dirichlet diffusion semigroup can be expressed as
\begin{equation}\begin{split}\label{DSG}
P_t^D f(x)&:=\E^x[f(X_t)1_{\{t<\tau\}}]=\int_M p_t^D(x,y)f(y)\,\mu(\d y)\\
&=\sum_{m=0}^\infty \e^{-\lambda_m t}\mu(\phi_m f)\phi_m(x),\quad t>0,f\in L^2(\mu).
\end{split}\end{equation}
Moreover, there exists a constant $c>0$ such that
\begin{equation}\begin{split}\label{DPQ}
\|P_t^D\|_{L^p(\mu)\to L^q(\mu)}&:=\sup_{\mu(|f|^p)\leq 1}\|P_t^D f\|_{L^q(\mu)}\\
&\leq c\e^{-\lambda_0 t}(1\wedge t)^{-\frac{d(q-p)}{2pq}},\quad t>0,\,\infty\geq q\geq p\geq 1.
\end{split}\end{equation}

Consider the Doob transform of $P_t^D$ defined by
\begin{equation}\label{R}
P_t^0 f:=\e^{\lambda_0 t}\phi_0^{-1}P_t^D(f\phi_0),\quad f\in L^2(\mu_0),\,t\geq0.
\end{equation}
It is well known that $(P_t^0)_{t\geq0}$ is a conservative (i.e., $P_t^01=1$ for every $t\geq0$) and symmetric semigroup in $L^2(\mu_0)$; in particular, $\mu_0$ is the invariant measure of $(P_t^0)_{t\geq0}$. The generator $\mathcal{L}_0$ of $(P_t^0)_{t\geq0}$, which is non-positive self-adjoint in $L^2(\mu_0)$, is connected with $\mathcal{L}$ by
 $$\mathcal{L}_0=\mathcal{L}+2\nabla \log\phi_0.$$
For every $m\in\mathbb{N}_0$, replacing $f$ in \eqref{R} by $\phi_0^{-1}\phi_m$, since $P_t^D\phi_m=\e^{-\ll_m t}\phi_m$, we clearly have
\begin{equation}\begin{split}\label{EIG0}
&P_t^0(\phi_m\phi_0^{-1})=\e^{-(\lambda_m-\lambda_0)t}\phi_m\phi_0^{-1},\quad t\geq 0,\\
&\mathcal{L}_0(\phi_m\phi_0^{-1})=-(\lambda_m-\lambda_0)\phi_m\phi_0^{-1}.
\end{split}\end{equation}
Then $\{\phi_0^{-1}\phi_m\}_{m\in\mathbb{N}_0}$ is an eigenbasis of $-\mathcal{L}_0$ in $L^2(\mu_0)$. Hence, by \eqref{DSG} and \eqref{R},
\begin{equation}\label{SG0}
P_t^0 f=\sum_{m=0}^\infty\mu_0(f\phi_m\phi_0^{-1})\e^{-(\lambda_m-\lambda_0)t}\phi_m\phi_0^{-1},\quad f\in L^2(\mu_0),\,t\geq0,
\end{equation}
and the heat kernel $(p_t^0)_{t>0}$ of $(P_t^0)_{t\geq0}$ w.r.t. $\mu_0$, can be expressed as
\begin{equation}\label{HK0}
p_t^0(x,y)=\sum_{m=0}^\infty(\phi_m\phi_0^{-1})(x)(\phi_m\phi_0^{-1})(y)\e^{-(\lambda_m-\lambda_0)t},\quad x,y\in M,\,t>0.
\end{equation}

By the intrinsic ultracontractivity, we can find a constant $\alpha_1\geq 1$ such that
\begin{equation}\begin{split}\label{IU0}
\|P_t^0-\mu_0\|_{L^1(\mu_0)\to L^\infty(\mu_0)}&:=\sup_{\mu_0(|f|)\leq 1}\|P_t^0 f-\mu_0(f)\|_\infty\\
&\leq\frac{\alpha_1 \e^{-(\lambda_1-\lambda_0)t}}{(1\wedge t)^{\frac{d+2}{2}}},\quad t>0.
\end{split}\end{equation}
Then the semigroup property and the contraction property of $(P_t^0)_{t\geq0}$ in $L^p(\mu)$ for all $1\le p \le \infty$ imply that, there exists a constant $\alpha_2\geq 1$ such that
\begin{equation}\begin{split}\label{PI0}
\|P_t^0-\mu_0\|_{L^p(\mu_0)\rightarrow L^p(\mu_0)}&:=\sup_{\mu_0(|f|^p)\leq 1}\|P_t^0 f-\mu_0(f)\|_{L^p(\mu_0)}\\
&\leq \alpha_2 \e^{-(\lambda_1-\lambda_0)t},\quad t\geq 0,\,\infty \geq p\geq 1.
\end{split}\end{equation}
Applying the Riesz--Thorin interpolation theorem (see e.g. \cite[page 3]{Davies89}), by \eqref{IU0} and \eqref{PI0}, we derive that
\begin{equation}\label{PQ0}
\|P_t^0-\mu_0\|_{L^p(\mu_0)\to L^q(\mu_0)}\leq \alpha_3 \e^{-(\lambda_1-\lambda_0)t}(1\wedge t)^{-\frac{(d+2)(q-p)}{2pq}},\quad t>0,\,\infty\geq q\geq p\geq 1,
\end{equation}
for some constant $\alpha_3>0$. Thus, employing \eqref{PQ0} and \eqref{EIG0}, we can find a constant $\alpha_4>0$ such that
\begin{equation}\label{EIG0UB}
\|\phi_m\phi_0^{-1}\|_\infty\leq \alpha_4 m^{\frac{d+2}{2d}}.
\end{equation}

Now we turn to the case of subordinated Dirichlet diffusion processes. Let $B\in\mathbf{B}$. Denote the heat kernel and the semigroup corresponding to the $B$-subordinated Dirichlet diffusion process $(X_t^B)_{t\geq0}$ by $(p_t^B)_{t>0}$ and $(P_t^{D,B})_{t\geq0}$, respectively.
Using \eqref{R}, we have
\begin{equation*}\label{R1}
P_t^Df=\e^{-\lambda_0 t}\phi_0P_t^0(f\phi_0^{-1}),\quad t\geq0,\,f\in L^2(\mu_0).
\end{equation*}
Combining this with the independence of $(X_t)_{t\geq0}$ and the $B$-subordinator $(S_t^B)_{t\geq0}$, we immediately obtain that
\begin{equation}\begin{split}\label{SDSG0}
P_t^{D,B}f(x)&:=\E^x[f(X_t^B)1_{\{t<\si_\tau^B\}}]=\int_0^\infty P_s^Df(x)\,\Pp_{S_t^B}(\d s)\\
&=\int_0^\infty \e^{-\lambda_0 s}\phi_0(x)P_s^0(f\phi_0^{-1})(x)\,\Pp_{S_t^B}(\d s), \quad t>0,\,x\in M,\,f\in L^2(\mu_0),
\end{split}\end{equation}
where $\Pp_{S_t^B}$ stands for the distribution measure of $S_t^B$. By \eqref{SDSG0}, \eqref{LT}, \eqref{DHK} and \eqref{DSG}, one has that
\begin{equation}\label{SDHK}
p_t^{D,B}(x,y)=\sum_{m=0}^\infty \e^{-B(\lambda_m) t}\phi_m(x)\phi_m(y),\quad x,y\in M,\,t>0,
\end{equation}
and
\begin{equation*}\begin{split}\label{SDSG}
P_t^{D,B} f&=\int_M p_t^{D,B}(\cdot,y)f(y)\,\mu(\d y)=\sum_{m=0}^\infty \e^{-B(\lambda_m) t}\mu(\phi_m f)\phi_m,\quad t\geq0,\,f\in L^2(\mu).
\end{split}\end{equation*}
Let $\nu\in \scr{P}_0$. Then, taking $f=1$ in \eqref{SDSG0}, we deduce that
\begin{equation}\begin{split}\label{ST}
\Pp^\nu(t<\sigma_\tau^B)&=\nu\left(\int_0^\infty \e^{-\lambda_0 s}\phi_0P_s^0\phi_0^{-1}\,\Pp_{S_t^B}(\d s)\right)\\
&=\int_0^\infty \e^{-\lambda_0 s}\nu(\phi_0P_s^0\phi_0^{-1})\,\Pp_{S_t^B}(\d s),\quad t>0,
\end{split}\end{equation}
where we applied Fubini's theorem in the last equality. Since $\nu(\phi_0P_s^0\phi_0^{-1})\geq \|\phi_0\|_\infty^{-1}\nu(\phi_0)\in(0,1]$ for every $s\geq0$, by \eqref{LT}, it is easy to see that
\begin{equation}\label{SDHT-L}
\Pp^\nu(t<\sigma_\tau^B)\geq \|\phi_0\|_\infty^{-1}\nu(\phi_0)\e^{-B(\lambda_0)t},\quad t\geq0.
\end{equation}

The following facts are useful. Let $\alpha\in(0,1]$ and $B\in \BB^\alpha$. From \cite[(3.12)]{eWW}, we see that there exist constants $\kappa_1,\kappa_2>0$ and $\kappa_0\geq0$ such that
\begin{equation}\label{BR}
B(t)\geq \kappa_2 (t^\alpha\wedge t)\geq \kappa_1 t^\aa-\kappa_0,\quad t\ge 0.
\end{equation}
According to \eqref{BR}, we have
\begin{equation}\label{equ-B}
\lim_{t\rightarrow\infty}\frac{B(t-t_0)}{B(t)-B(t_0)}=1,\quad t_0\geq0.
\end{equation}
In addition,
\begin{equation}\label{SDHT}
\lim_{t\to\infty}\{\e^{B(\ll_0) t}\Pp^\nu(t<\si_\tau^B)\}=\lim_{t\to\infty}\{\e^{B(\ll_0) t}\nu(P_t^{D,B} 1)\}=\mu(\phi_0)\nu(\phi_0);
\end{equation}
see e.g. \cite[(2.20)]{LiWu2204}.

We end this section by collecting some notations. Denote by $|\cdot|$ the length in the tangent space of $M$. Let $\B_b(M)$ (resp. $\B_+(M)$)  be the class of bounded (resp. non-negative) measurable functions on $M$. Let $\Gamma(\cdot)$ be the gamma function, i.e., $\Gamma(t)=\int_0^\infty x^{t-1}\e^{-x}\,\d x$, $t>0$. We use $1_A$ to denote the index function of a set $A$. For any $x,y\in\R\cup\{\infty,-\infty\}$, $x\wedge y:=\min\{x,y\}$ and $x\vee y:=\max\{x,y\}$; in particular, $x\vee0=:x^+$.

\section{Upper bounds: proof of Theorem \ref{T1.2}}
The upper bounded estimates are based on the following general inequalities which are borrowed from \cite[Proposition 2.3]{eAST} and  \cite[Theorem 2]{Led2017} respectively (see also \cite[Theorem A.1]{eW3} for $\W_p$ for all $p\geq1$ and \cite{Peyre2018}): for every probability density function $g$ w.r.t. $\mu_0$ with $g\in L^2(\mu_0)$,
\begin{equation}\label{W2UB}\W_2(g\mu_0,\mu_0)^2\leq\int_M\frac{|\nabla (-\mathcal{L}_0)^{-1}(g-1)|^2}{\mathscr{M}(g,1)}\,\d\mu_0,\end{equation}
 and
\begin{equation}\label{W2UB-ledoux}
\W_2(f\mu_0,\mu_0)^2\leq 4\int_M|\nabla (-\mathcal{L}_0)^{-1}(f-1)|^2\,\d\mu_0,\quad f\geq0,\,\mu_0(f)=1,
\end{equation}
where for every $a,b\in[0,\infty)$,
\begin{eqnarray*}
\mathscr{M}(a,b):=
\begin{cases}
\frac{a-b}{\log a-\log b}1_{\{a\wedge b>0\}},\quad&{a\neq b},\\
\frac{1}{a}1_{\{a>0\}},\quad&{\mbox{otherwise}}.
\end{cases}
\end{eqnarray*}

Let $t>0$ and $B\in\mathbf{B}$. In order to apply the above inequalities, we consider the regularized version of $\mu_t^B$ by employing the semigroup operator $P_r^0$:
$$\mu_{t,r}^B:=\mu_t^B P_r^0,\quad r>0,$$
which means that
$$\mu_{t,r}^B(f)=\int_M P_r^0f\,\d\mu_t^B,\quad f\in \B_+(M),\,r>0.$$
Then, it is easy to see that $\mu_{t,r}^B$ is absolutely continuous with respect to $\mu_0$, and by \eqref{HK0}, we have
\beq\label{MTR}
\rho_{t,r}^B:=\frac{\d\mu_{t,r}^B}{\d\mu_0}=\frac{1}{t}\int_0^t p_r^0(X_s^B,\cdot)\,\d s=1+\sum_{m=1}^\infty \e^{-(\lambda_m-\lambda_0)r}\psi_m^B(t)\phi_m\phi_0^{-1},\quad r>0,
\end{equation}
where
$$\psi_m^B(t):=\frac{1}{t}\int_0^t\{\phi_m\phi_0^{-1}\}(X_s^B)\,\d s.$$
Note that both the families $\rho_{t,r}^B$ and $\psi_m^B(t)$ are well-defined on the event $\{t<\sigma_\tau^B\}$.

 Now we should give a brief description on the overall strategy of proof of Theorem \ref{T1.2}.
\begin{itemize}
\item[(1)] Instead of considering any initial distribution, by an approximation procedure, it suffices to deal with the initial distribution $\nu$ from $\scr{P}_0$ which is absolutely continuous w.r.t. $\mu$ such that the Radon--Nikodym derivative $\frac{\d\nu}{\d\mu}$ is controlled by $\phi_0$. See \textbf{Step 3} in the proof of Theorem \ref{T1.2}(1) for details.

\item[(2)] Applying the triangular inequality of $\W_2$, we have
\begin{equation*}\begin{split}
\E^\nu[\W_2(\mu_t^B,\mu_0)^2| t<\sigma_\tau^B]&\leq(1+\epsilon)\E^\nu[\W_2(\mu_{t,r}^B,\mu_0)^2| t<\sigma_\tau^B]\\
&\quad+(1+\epsilon^{-1})\E^\nu[\W_2(\mu_t^B,\mu_{t,r}^B)^2| t<\sigma_\tau^B],\quad \epsilon>0,\,r\in(0,1),
\end{split}\end{equation*}
where the parameter $r$ will be chosen with a suitable decay order of $t$. Then we turn to estimate respectively the two parts on the right hand side of the above inequality.

\item[(3)] The estimation of $\E^\nu[\W_2(\mu_t^B,\mu_{t,r}^B)^2| t<\sigma_\tau^B]$, which can be regarded as an error term,  is straightforward; see (3.39). Applying \eqref{W2UB-ledoux} to the dominant term $\E^\nu[\W_2(\mu_{t,r}^B,\mu_0)^2| T<\sigma_\tau^B]$, we are left to deal with $\mu(|\nabla(-\mathcal{L}_0)^{-1}(\rho_{t,r}^B-1)|^2)$, which needs much effort; see Lemma \ref{L3.2}(2).  Indeed, this strategy is effective to derive rates of convergence in Theorem \ref{T1.2}. However, in order to obtain the constant $c=1$ in the second assertion of Theorem \ref{T1.2}(1), in the case when $d<6\alpha-2$ with some $\alpha\in(\frac{1}{2},1]$, we should apply \eqref{W2UB} in a different manner, and more effort is involved to estimate the resultant term due to the extra factor corresponding to the right hand side of \eqref{W2UB}; see Lemmas \ref{L3.2}(1), \ref{L3.6} and \ref{L3.7} and \eqref{L3.8+1}. Moreover, the establishment of Lemmas \ref{L3.6} and \ref{L3.7} in turn depends on Lemma \ref{L3.5} and Lemma \ref{L3.4}, respectively.
\end{itemize}

In the sequel, we will also frequently use the notation $\nu_0$, i.e.,
$$\nu_0:=\ff{\phi_0}{\mu(\phi_0)}\mu,$$
and apply the following result.
\begin{lem}\label{nu0}
Let $B\in\mathbf{B}$ and $\nu\in\scr{P}_0$ such that $\nu=h\mu$ with $|h\phi_0^{-1}|\le C$ for some constant $C>0$. Then there exists a constant $K>0$ such that
\begin{equation*}\label{12L3.2}
\E^\nu(g|t<\si_\tau^B)\leq K\E^{\nu_0}(g|t<\si_\tau^B),\quad g\in\B_+(M),\,t\ge 0.
\end{equation*}
\end{lem}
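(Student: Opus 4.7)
\emph{Plan.} I would decompose the conditional expectation as
\[
\E^\nu(g|t<\si_\tau^B)=\ff{\E^\nu(g\,1_{\{t<\si_\tau^B\}})}{\Pp^\nu(t<\si_\tau^B)},
\]
and bound the numerator and the denominator against their $\nu_0$-counterparts separately, arranging things so that the resulting prefactor depends only on $C$, $\|\phi_0\|_\infty$, $\mu(\phi_0)$ and $\nu(\phi_0)$, and not on $t$ or $g$.

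The numerator bound is immediate. Writing $\nu=h\mu$, the hypothesis $|h\phi_0^{-1}|\le C$ rewrites as the pointwise inequality $h\le C\phi_0=C\mu(\phi_0)\,\ff{\d\nu_0}{\d\mu}$; integrating $\E^x(g\,1_{\{t<\si_\tau^B\}})$ against this pointwise comparison of densities gives
\[
\E^\nu(g\,1_{\{t<\si_\tau^B\}})\le C\mu(\phi_0)\,\E^{\nu_0}(g\,1_{\{t<\si_\tau^B\}}).
\]

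For the denominator, I would compute $\Pp^{\nu_0}(t<\si_\tau^B)=\nu_0(P_t^{D,B}1)$ explicitly by substituting the spectral expansion coming from \eqref{SDHK}, namely $P_t^{D,B}1=\sum_{m\ge 0} \e^{-B(\ll_m)t}\mu(\phi_m)\phi_m$. Since $\nu_0=\mu(\phi_0)^{-1}\phi_0\mu$ and $\{\phi_m\}_{m\in\N_0}$ is orthonormal in $L^2(\mu)$, one has $\nu_0(\phi_m)=\mu(\phi_0\phi_m)/\mu(\phi_0)=\delta_{0m}/\mu(\phi_0)$, so only the $m=0$ term survives and
\[
\Pp^{\nu_0}(t<\si_\tau^B)=\e^{-B(\ll_0)t}.
\]
On the other hand, \eqref{SDHT-L} furnishes the matching general lower bound $\Pp^\nu(t<\si_\tau^B)\ge \|\phi_0\|_\infty^{-1}\nu(\phi_0)\,\e^{-B(\ll_0)t}$, where $\nu(\phi_0)>0$ since $\nu\in\scr{P}_0$ and $\phi_0>0$ on $\mathring{M}$. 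Taking the ratio, $\Pp^{\nu_0}(t<\si_\tau^B)/\Pp^\nu(t<\si_\tau^B)\le \|\phi_0\|_\infty/\nu(\phi_0)$ uniformly in $t\ge 0$.

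Multiplying the numerator and denominator estimates then proves the lemma with $K:=C\mu(\phi_0)\|\phi_0\|_\infty/\nu(\phi_0)$. I do not anticipate any genuine obstacle: the one substantive observation is that after $\nu_0$-averaging only the ground state $\phi_0$ survives in the spectral sum and produces the exact decay $\e^{-B(\ll_0)t}$, which is precisely the leading exponential order already guaranteed by \eqref{SDHT-L} for $\Pp^\nu(t<\si_\tau^B)$; these two exponentials cancel and the survival-probability ratio stays bounded for all $t\ge 0$.
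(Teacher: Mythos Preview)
Your proof is correct and the numerator bound matches the paper's verbatim. The difference lies in the denominator estimate $\Pp^\nu(t<\si_\tau^B)\ge c\,\Pp^{\nu_0}(t<\si_\tau^B)$: the paper obtains this by first citing \cite[Lemma 3.2(2)]{eW2} for the diffusion inequality $\Pp^\nu(t<\tau)\ge c_0\Pp^{\nu_0}(t<\tau)$ and then integrating against $\Pp_{S_t^B}$, whereas you compute $\Pp^{\nu_0}(t<\si_\tau^B)$ \emph{exactly} by exploiting that $\nu_0\propto\phi_0\mu$ kills every eigenmode except $m=0$ (equivalently, by symmetry $\mu(\phi_0 P_t^{D,B}1)=\mu(P_t^{D,B}\phi_0)=\e^{-B(\ll_0)t}\mu(\phi_0)$), yielding $\Pp^{\nu_0}(t<\si_\tau^B)=\e^{-B(\ll_0)t}$, and then combine this with the already-available \eqref{SDHT-L}. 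Your route is shorter, entirely self-contained (no appeal to \cite{eW2}), and gives an explicit constant $K=C\mu(\phi_0)\|\phi_0\|_\infty/\nu(\phi_0)$; the paper's detour through the unsubordinated process is unnecessary once one recognizes $\nu_0$ as the quasi-stationary initial law.
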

\begin{proof} Let $g\in\B_+(M)$. It is clear that
\begin{equation*}\begin{split}\label{11L3.2}
\E^\nu[g]&=\int_{M}\E^x[g]\,\nu(\d x)=\int_M h(x)\E^x[g]\,\mu(\d x)\\
&\le C\int_M \phi_0(x)\E^x[g]\,\mu(\d x)=C\mu(\phi_0)\E^{\nu_0}[g].
\end{split}\end{equation*}
Then
\begin{equation}\label{12L3.2}
\E^\nu(g|t<\si_\tau^B)=\ff{\E^\nu(1_{\{t<\si_\tau^B\}}g)}{\Pp^\nu(t<\si_\tau^B)}
\leq\ff{C\mu(\phi_0)\E^{\nu_0}(1_{\{t<\si_\tau^B\}}g)}{\Pp^{\nu}(t<\si_\tau^B)},\quad t\ge 0.
\end{equation}

Since $\mu(\phi_0)\in(0,\infty)$, it suffices to show that there exists a constant $c>0$ such that
$$\Pp^\nu(t<\si_\tau^B)\ge c\Pp^{\nu_0}(t<\si_\tau^B),\quad t\ge 0.$$
By the proof of \cite[Lemma 3.2(2)]{eW2}, we have
$$\Pp^{\nu}(t<\tau)\ge c_0\Pp^{\nu_0}(t<\tau),\quad t\ge 0,$$
for some constant $c_0>0$. Then, by \eqref{SDSG0}, we see that
\begin{equation*}\begin{split}
\Pp^{\nu}(t<\si_\tau^B)&=\int_0^\infty \P^\nu(s<\tau)\,\P_{S_t^B}(\d s)\\
&\ge c_0\int_0^\infty \P^{\nu_0}(s<\tau)\,\P_{S_t^B}(\d s)\\
&= c_0\Pp^{\nu_0}(t<\si_\tau^B),\quad t\geq0.
\end{split}\end{equation*}
The proof is completed.
\end{proof}

\begin{lem}\label{L3.2}
Let $\aa\in(0,1]$, $B\in \BB^\aa$ and $\nu\in\scr{P}_0$.  Suppose that $\nu=h\mu$ with $\|h\phi_0^{-1}\|_\infty<\infty$.
\begin{itemize}
\item[$(1)$] If $\alpha\in(\frac{1}{2},1]$ and $d<6\alpha-2$,
then there exists a constant $c>0$ such that
\begin{equation*}\begin{split}
&\sup_{T\geq t}\left|t\E^\nu\big[\mu_0\big(|\nabla (-\mathcal{L}_0)^{-1}(\rho_{t,r}^B-1)|^2\big)|T<\sigma_\tau^B\big]-
2\sum_{m=1}^\infty\frac{\e^{-2(\lambda_m-\lambda_0)r}}{(\lambda_m-\lambda_0)(B(\lambda_m)-B(\lambda_0))}\right|\\
&\leq c t^{-1}\Big(r^{-\frac{(d-2)^+}{2}}+1_{\{d=2\}}\log r^{-1}\Big),\quad r\in(0,1],\,t\geq 1.
\end{split}\end{equation*}
\item[$(2)$] If $d\ge 6\aa-2$,  then for any $\gamma>\frac{d+2-6\alpha}{6}$, there exists a constant $c>0$ such that for any $t\ge 1$ and $r>0$,
\begin{equation}\begin{split}\label{0L3.2}
&\sup_{T\geq t}t\E^\nu\big[\mu_0\big(|\nabla (-\mathcal{L}_0)^{-1}(\rho_{t,r}^B-1)|^2\big)|T<\sigma_\tau^B\big]\\
&\le \sum_{m=1}^\infty\ff{c\e^{-2(\ll_m-\ll_0)r}}{(\ll_m-\ll_0)[B(\ll_m)-B(\ll_0)]}+\ff{cr^{-\gg}}{t}\sum_{m=1}^\infty
\ff{\e^{-(\ll_m-\ll_0)r}}{(\ll_m-\ll_0)[B(\ll_m)-B(\ll_0)]}.
\end{split}\end{equation}
Moreover, if $d\geq 2(1+\alpha)$,  then for any $\gamma>\frac{d+2-6\alpha}{6}$, there exists a constant $C>0$ such that
\begin{equation}\begin{split}\label{0L3.3}
&\sup_{T\geq t}t\E^\nu\big[\mu_0\big(|\nabla (-\mathcal{L}_0)^{-1}(\rho_{t,r}^B-1)|^2\big)|T<\sigma_\tau^B\big]\\
&\leq C\Big(r^{-\frac{d-2(1+\alpha)}{2}}+1_{\{d=2(1+\alpha)\}}\log r^{-1}+t^{-1}r^{-\gamma}\Big),\quad r\in(0,1),\, t\geq 1.
\end{split}\end{equation}
\end{itemize}
\end{lem}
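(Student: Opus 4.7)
My plan is to reduce the spatial quantity to a sum over spectral modes via Parseval's identity, compute the conditional moments by a Doob transform to a reversible semigroup, and then handle the resulting error terms.

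\textbf{Step 1 (spectral reduction).} Since $\{F_m:=\phi_m\phi_0^{-1}\}_{m\in\N_0}$ forms an orthonormal basis of $L^2(\mu_0)$ of eigenfunctions of $-\mathcal{L}_0$ with eigenvalues $\lambda_m-\lambda_0$, the expansion \eqref{MTR} of $\rho_{t,r}^B$ yields
$$(-\mathcal{L}_0)^{-1}(\rho_{t,r}^B-1)=\sum_{m=1}^\infty\frac{\e^{-(\lambda_m-\lambda_0)r}}{\lambda_m-\lambda_0}\psi_m^B(t)F_m,$$
so by integration by parts (using $\mu_0(F_mF_n)=\delta_{mn}$),
$$\mu_0\big(|\nn(-\mathcal{L}_0)^{-1}(\rho_{t,r}^B-1)|^2\big)=\sum_{m=1}^\infty\frac{\e^{-2(\lambda_m-\lambda_0)r}}{\lambda_m-\lambda_0}\psi_m^B(t)^2.$$
Thus everything reduces to controlling $\E^\nu[\psi_m^B(t)^2\,|\,T<\si_\tau^B]$.

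\textbf{Step 2 (reduction to $\nu_0$ and Doob transform).} By Lemma \ref{nu0} it suffices to take $\nu=\nu_0$. Define the Markov semigroup $\widetilde{P}_t^B f:=\phi_0^{-1}\e^{B(\lambda_0)t}P_t^{D,B}(f\phi_0)$, which is $\mu_0$-reversible (by symmetry of $P_t^{D,B}$ on $L^2(\mu)$) and satisfies $\widetilde{P}_t^B F_m=\e^{-(B(\lambda_m)-B(\lambda_0))t}F_m$. Combining the identity $P_t^{D,B}g=\e^{-B(\lambda_0)t}\phi_0\widetilde{P}_t^B(g\phi_0^{-1})$ with the Markov property of $(X_s^B)$, a direct computation (symmetrizing in $s\le u$) gives
$$\E^{\nu_0}[\psi_m^B(t)^2 1_{\{T<\si_\tau^B\}}]=\frac{2\e^{-B(\lambda_0)T}}{t^2\mu(\phi_0)}\int_0^t\int_s^t\e^{-(B(\lambda_m)-B(\lambda_0))(u-s)}\mu_0\big(F_m^2\,\widetilde{P}_{T-u}^B(\phi_0^{-1})\big)\d u\d s.$$
Since $\Pp^{\nu_0}(T<\si_\tau^B)=\e^{-B(\lambda_0)T}$ (by the $L^2(\mu)$-orthonormality of the $\phi_m$ and $\mu(\phi_0^2)=1$), dividing yields an explicit formula for the conditional expectation.

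\textbf{Step 3 (main asymptotic).} Writing $\widetilde{P}_{T-u}^B(\phi_0^{-1})=\mu(\phi_0)+(\widetilde{P}_{T-u}^B-\mu_0)(\phi_0^{-1})$ and using $\mu_0(F_m^2)=1$, the leading contribution is
$$\frac{2}{t^2}\int_0^t\int_s^t\e^{-(B(\lambda_m)-B(\lambda_0))(u-s)}\d u\d s=\frac{2}{t(B(\lambda_m)-B(\lambda_0))}-\frac{2(1-\e^{-(B(\lambda_m)-B(\lambda_0))t})}{t^2(B(\lambda_m)-B(\lambda_0))^2},$$
while the correction $(\widetilde{P}_{T-u}^B-\mu_0)(\phi_0^{-1})$ decays exponentially in $T-u$ by intrinsic ultracontractivity of $P_s^0$ (via \eqref{IU0} lifted through the subordinator and the Doob identity), uniformly in $T\ge t$. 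Multiplying by $\e^{-2(\lambda_m-\lambda_0)r}/(\lambda_m-\lambda_0)$, summing over $m$, and multiplying by $t$ recovers the target sum $\sum 2\e^{-2(\lambda_m-\lambda_0)r}/[(\lambda_m-\lambda_0)(B(\lambda_m)-B(\lambda_0))]$ plus error terms.

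\textbf{Step 4 (error accounting).} For Part $(1)$, the finite-time correction contributes
$$\ff{2}{t}\sum_{m=1}^\infty\ff{\e^{-2(\lambda_m-\lambda_0)r}}{(\lambda_m-\lambda_0)[B(\lambda_m)-B(\lambda_0)]^2},$$
which by \eqref{EIG}, \eqref{BR}, and conversion of the sum to an integral using the substitution $u=m^{2/d}r$ gives exactly the $r^{-(d-2)^+/2}+1_{\{d=2\}}\log r^{-1}$ behaviour. The intrinsic-ultracontractivity contribution is exponentially small in $T-u$ after integration, hence absorbed. This yields the sharp constant $c=2$ once $d<6\aa-2$ is used (this condition ensures that the expansion of the perturbative term involving triple products $\mu_0(F_m F_n F_k)$ and the higher moment bound $\|F_m\|_\infty\le\alpha_4 m^{(d+2)/(2d)}$ from \eqref{EIG0UB} can be summed with the gained factor $B(\lambda_m)-B(\lambda_0)\gtrsim\lambda_m^\alpha$). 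For Part $(2)$, one keeps the exponential smoothing factor $\e^{-(\lambda_m-\lambda_0)r}$ inside the sum to control divergent series and applies cruder bounds, which produces \eqref{0L3.2}; then \eqref{0L3.3} follows from \eqref{0L3.2} by an elementary integral-comparison.

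The main obstacle is obtaining the sharp constant in Part $(1)$: the intrinsic ultracontractivity argument must be controlled uniformly in $T\ge t$ while simultaneously absorbing the spectral corrections of $\mu_0(F_m^2\widetilde{P}_{T-u}^B(\phi_0^{-1}))$ into error terms of the precise order $t^{-1}r^{-(d-2)^+/2}$. The critical dimension threshold $d<6\alpha-2$ is precisely what allows the relevant series involving both the eigenvalue lower bound and the uniform bound \eqref{EIG0UB} to converge against the sharp constant instead of being absorbed into a larger implicit constant $c$.
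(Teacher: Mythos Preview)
Your spectral reduction in Step~1 and the computation in Step~2 with the subordinated Doob semigroup $\widetilde{P}_t^B$ are correct and amount to a repackaging of the paper's approach (the paper integrates $P_s^0$ against the subordinator law rather than introducing $\widetilde{P}_t^B$, but these are equivalent). However, there is a genuine gap.

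\textbf{The reduction to $\nu_0$ is invalid for Part~(1).} Lemma~\ref{nu0} is a one-sided comparison: it gives $\E^\nu[g\mid t<\si_\tau^B]\le K\,\E^{\nu_0}[g\mid t<\si_\tau^B]$ for nonnegative $g$. That suffices for Part~(2), which asserts only an upper bound, but Part~(1) asserts a \emph{two-sided} estimate $|t\E^\nu[\cdots]-A|\le c\,t^{-1}R(r)$. Knowing this for $\nu_0$ and applying Lemma~\ref{nu0} yields only $t\E^\nu[\cdots]\le K(A+c\,t^{-1}R(r))$, which is not the claim. The paper handles general $\nu$ directly: the Doob computation for arbitrary $\nu=h\mu$ produces two extra corrections (the terms $\mathrm{J}_1,\mathrm{J}_2$ in \eqref{5L3.2}), coming from $\widetilde{P}_s^B(h\phi_0^{-1})-\mu_0(h\phi_0^{-1})$ and its interaction with the tail factor. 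Your formula in Step~2 collapses these because $h\phi_0^{-1}$ is constant when $\nu=\nu_0$; for general $h$ they must be estimated separately.

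\textbf{The role of $d<6\alpha-2$ is misidentified.} You attribute the threshold to summability involving ``triple products $\mu_0(F_mF_nF_k)$'' and \eqref{EIG0UB}. In fact, in the paper's argument no triple products appear: the condition enters through the $L^\theta(\mu_0)\to L^\infty(\mu_0)$ smoothing bound \eqref{PQ0} applied to $\phi_0^{-1}$, which needs $\theta<3$ by \eqref{PHI}, combined with the negative-moment estimate $\int_0^\infty(1\wedge w)^{-\kappa}\,\Pp_{S_r^B}(\d w)\lesssim 1+r^{-\kappa/\alpha}$. The resulting singularity $(T-u)^{-(d+2)/(2\alpha\theta)}$ near $u=T$ is integrable precisely when one can pick $\theta\in\big(\tfrac{d+2}{2\alpha},3\big)$, i.e.\ when $d<6\alpha-2$. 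Your vague claim that the ultracontractivity contribution is ``exponentially small in $T-u$ after integration, hence absorbed'' hides exactly this issue.

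Finally, a minor point: the finite-time correction you write in Step~4, $\tfrac{2}{t}\sum_m e^{-2(\lambda_m-\lambda_0)r}/[(\lambda_m-\lambda_0)(B(\lambda_m)-B(\lambda_0))^2]$, actually converges uniformly in $r\in(0,1]$ under $d<6\alpha-2$, so it is $O(t^{-1})$, not ``exactly'' $r^{-(d-2)^+/2}$. The $r$-dependence in the lemma statement comes instead from the ultracontractivity correction when bounded crudely (uniformly in $m$), as in the paper's final display of Part~(1). This does not break your bound, but your description of where the error comes from is inaccurate.
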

\begin{proof}
Let $t,r>0$ and $T\geq t$. Since $(\phi_n)_{n\in\mathbb{N}_0}$ is an orthonormal basis in $L^2(\mu)$, by \eqref{EIG0} and the integration-by-parts formula, we see that $\{\ff {\nn(\phi_m\phi_0^{-1})}{\ss{\ll_m-\ll_0}}\}_{m\in\mathbb{N}}$ is an orthonormal basis in $\overrightarrow{L}^2(\mu_0)$, the class of measurable vector fields $v$ on $M$ such that $|v|\in L^2(\mu_0)$. Then
\begin{equation*}\begin{split}
&\mu_0(|\nn (-\mathcal{L}_0)^{-1}(\rho_{t,r}^B-1)|^2)\\
&=\mu_0\left(\left|\nn (-\mathcal{L}_0)^{-1}\sum_{m=1}^\infty \e^{-(\ll_m-\ll_0)r}\psi_m^B(t)(\phi_m\phi_0^{-1})\right|^2\right)\\
&=\mu_0\left(\left|\sum_{m=1}^\infty\ff {\psi_m^B(t)}{(\ll_m-\ll_0)\e^{(\ll_m-\ll_0)r}}\nn(\phi_m\phi_0^{-1})\right|^2\right)\\
&=\sum_{m=1}^\infty\ff{|\psi_m^B(t)|^2}{(\ll_m-\ll_0)\e^{2(\ll_m-\ll_0)r}}.
\end{split}\end{equation*}
Hence
\begin{equation}\begin{split}\label{1L3.2}
&t\E^\nu\big[\mu_0\big(|\nabla(-\mathcal{L}_0)^{-1}(\rho_{t,r}^B-1)|^2\big)|T<\sigma_\tau^B\big]\\
&=\sum_{m=1}^\infty\frac{t\E^\nu[|\psi_m^B(t)|^2|T<\sigma_\tau^B]}{\lambda_m-\lambda_0}\e^{-2(\lambda_m-\lambda_0)r}\\
&=\sum_{m=1}^\infty\frac{2\int_0^t \d s_1\int_{s_1}^t\E^\nu[1_{\{T<\sigma_\tau^B\}}(\phi_m\phi_0^{-1})(X_{s_1}^B)(\phi_m\phi_0^{-1})(X_{s_2}^B)]\,\d s_2}{t\e^{2(\lambda_m-\lambda_0)r}(\lambda_m-\lambda_0)\Pp^\nu(T<\sigma_\tau^B)}.
\end{split}\end{equation}

Let $f\in\B_b(M)$. Then, for any $0\leq s\leq T$, by the Markov property,
\begin{equation*}\begin{split}
\E^x[f(X_s^B)1_{\{T<\sigma_\tau^B\}}]&=\E^x\big\{1_{\{s<\sigma_\tau^B\}}f(X_s^B)\E^{X_s^B}[1_{\{T-s<\sigma_\tau^B\}}]\big\}\\
&=P_s^{D,B}\{fP_{T-s}^{D,B} 1\}(x),
\end{split}\end{equation*}
which together with \eqref{SDSG0}
implies that
\begin{equation}\begin{split}\label{2L3.2}
\E^{\nu}[f(X_s^B)1_{\{T<\sigma_\tau^B\}}]
=\int_0^\infty\!\!\!\int_0^\infty \e^{-\lambda_0 (u+v)}\nu(\phi_0P_v^0\{fP_u^0\phi_0^{-1}\})\,\Pp_{S_{T-s}^B}(\d u)\Pp_{S_{s}^B}(\d v).
\end{split}\end{equation}
Furthermore, we have that, for any $0<s_1<s_2\leq T$,
\begin{equation}\begin{split}\label{3L3.2}
&\E^\nu[f(X_{s_1}^B)f(X_{s_2}^B)1_{\{T<\sigma_\tau^B\}}]\\
&=\int_M\E^x\big[1_{\{s_1<\sigma_\tau^B\}}f(X_{s_1}^B)\E^{X_{s_1}^B}\{f(X_{s_2-s_1}^B)1_{\{T-s_1<\sigma_\tau^B\}}\}\big]\,\nu(\d x)\\
&=\int_M\E^x\big(1_{\{s_1<\sigma_\tau^B\}}f(X_{s_1}^B)\E^{X_{s_1}^B}\big\{f(X_{s_2-s_1}^B)1_{\{s_2-s_1<\si_\tau^B\}}
\E^{X_{s_2-s_1}^B}[1_{\{T-s_2<\sigma_{\tau}^B\}}]\big\}\big)\,\nu(\d x)\\
&=\nu\big(P_{s_1}^{D,B}\big[fP_{s_2-s_1}^{D,B}(f P_{T-s_2}^{D,B}1)\big]\big)\\
&=\int_0^\infty\!\!\! \int_0^\infty \!\!\!\int_0^\infty \e^{-\lambda_0 (u+v+w)}
\nu\big(\phi_0 P_u^0\big[fP_v^0(fP_w^0\phi_0^{-1})\big]\big)\,\Pp_{S_{T-s_2}^B}(\d w)\Pp_{S_{s_2-s_1}^B}(\d v)\Pp_{S_{s_1}^B}(\d u).
\end{split}\end{equation}

Since $\nu=h\phi_0^{-2}\mu_0$, by the symmetry of $(P^0_t)_{t\geq0}$ in $L^2(\mu_0)$, we derive that, for every $u,v,w>0$,
\begin{equation*}\begin{split}
&\nu\big(\phi_0P_u^0\big[fP_v^0(fP_w^0\phi_0^{-1})\big]\big)=\mu_0\big(h\phi_0^{-1}P_u^0\big[fP_v^0(fP_w^0\phi_0^{-1})\big]\big)\\
&=\mu_0\big(P_u^0(h\phi_0^{-1})fP_v^0(fP_w^0\phi_0^{-1})\big)\\
&=\mu_0\big(P_u^0(h\phi_0^{-1})fP_v^0\{f[\mu(\phi_0)+P_w^0(\phi_0^{-1})-\mu(\phi_0)]\}\big)\\
&=\mu(\phi_0)\mu_0\big(P_u^0(h\phi_0^{-1})fP_v^0f\big)+\mu_0\big(P_u^0(h\phi_0^{-1})fP_v^0\{f[P_w^0(\phi_0^{-1})-\mu(\phi_0)]\}\big)\\
&=:{\rm I_1}+{\rm I_2}.
\end{split}\end{equation*}
In particular,  for each $m\in\mathbb{N}$, taking $f=\phi_m\phi_0^{-1}$ and noting that $\|\phi_m\phi_0^{-1}\|_{L^2(\mu_0)}=1$ and $P_t^0 (\phi_m\phi_0^{-1})=\e^{-(\ll_m-\ll_0)t}\phi_m\phi_0^{-1}$ for every $t>0$, we have, for any $u,v,w>0$ and any $m\in\mathbb{N}$,
\begin{equation*}\begin{split}
{\rm I_1}&=\mu(\phi_0)\mu_0[P_u^0(h\phi_0^{-1})\phi_m\phi_0^{-1}P_v^0(\phi_m\phi_0^{-1})]\\
&=\mu(\phi_0)\mu_0\big([P_u^0(h\phi_0^{-1})-\mu(h\phi_0)+\mu(h\phi_0)]\e^{-(\lambda_m-\lambda_0)v}\{\phi_m\phi_0^{-1}\}^2\big)\\
&=\mu(\phi_0)\e^{-(\lambda_m-\lambda_0)v}\mu_0\big(\{\phi_m\phi_0^{-1}\}^2[P_u^0(h\phi_0^{-1})-\mu(h\phi_0)]\big)\\
&\quad+\mu(\phi_0)\mu(h\phi_0)\e^{-(\lambda_m-\lambda_0)v},
\end{split}\end{equation*}
and
\begin{equation*}\begin{split}
{\rm I_2}&=\mu_0\big(P_u^0(h\phi_0^{-1})\phi_m\phi_0^{-1}P_v^0\{\phi_m\phi_0^{-1}[P_w^0\phi_0^{-1}-\mu(\phi_0)]\}\big)\\
&=\mu_0\big([P_u^0(h\phi_0^{-1})-\mu(h\phi_0)+\mu(h\phi_0)]\phi_m\phi_0^{-1}P_v^0\{\phi_m\phi_0^{-1}[P_w^0\phi_0^{-1}-\mu(\phi_0)]\}\big)\\
&=\mu_0\big([P_u^0(h\phi_0^{-1})-\mu(h\phi_0)]\phi_m\phi_0^{-1}P_v^0\{\phi_m\phi_0^{-1}[P_w^0\phi_0^{-1}-\mu(\phi_0)]\}\big)\\
&\quad+\mu(h\phi_0)\mu_0\big(\phi_m\phi_0^{-1}P_v^0\{\phi_m\phi_0^{-1}[P_w^0\phi_0^{-1}-\mu(\phi_0)]\}\big)\\
&=\mu_0\big([P_u^0(h\phi_0^{-1})-\mu(h\phi_0)]\phi_m\phi_0^{-1}P_v^0\{\phi_m\phi_0^{-1}[P_w^0\phi_0^{-1}-\mu(\phi_0)]\}\big)\\
&\quad+\mu(h\phi_0)\e^{-(\lambda_m-\lambda_0)v}\mu_0\big(\{\phi_m\phi_0^{-1}\}^2\{P_w^0\phi_0^{-1}-\mu(\phi_0)\}\big).
\end{split}\end{equation*}
Thus, for every $u,v,w>0$ and each $m\in\mathbb{N}$, we can divide $\nu\big(\phi_0 P_u^0[\phi_m\phi_0^{-1}P_v^0(\phi_m\phi_0^{-1}P_w^0\phi_0^{-1})]\big)$ into the following four parts, i.e.,
\begin{equation}\begin{split}\label{5L3.2}
&\nu\big(\phi_0 P_u^0[\phi_m\phi_0^{-1}P_v^0(\phi_m\phi_0^{-1}P_w^0\phi_0^{-1})]\big)\\
&=\nu(\phi_0)\mu(\phi_0)\e^{-(\lambda_m-\lambda_0)v}+{\rm J_1}(u,v,w)+{\rm J_2}(u,v,w)+{\rm J_3}(u,v,w),
\end{split}\end{equation}
where
\begin{align*}
&{\rm J_1}(u,v,w):=\mu_0\big([P_u^0(h\phi_0^{-1})-\mu(h\phi_0)]\phi_m\phi_0^{-1}P_v^0\{\phi_m\phi_0^{-1}[P_w^0\phi_0^{-1}-\mu(\phi_0)]\}\big),\\
&{\rm J_2}(u,v,w):=\mu(\phi_0)\e^{-(\lambda_m-\lambda_0)v}\mu_0\big(\{\phi_m\phi_0^{-1}\}^2 [P_u^0(h\phi_0^{-1})-\mu(h\phi_0)]\big),\\
&{\rm J_3}(u,v,w):=\mu(h\phi_0)\e^{-(\lambda_m-\lambda_0)v}\mu_0\big(\{\phi_m\phi_0^{-1}\}^2\{P_w^0\phi_0^{-1}-\mu(\phi_0)\}\big).
\end{align*}

By \eqref{PI0}, the symmetry of $(P_t^0)_{t\geq0}$ in $L^2(\mu_0)$ and $\|\phi_0^{-1}\|_{L^2(\mu_0)}=1$, there exists a constant $c>0$ such that
\begin{equation*}\begin{split}
&|\nu(\phi_0P_t^0\phi_0^{-1})-\mu(\phi_0)\nu(\phi_0)|=|\mu_0(h\phi_0^{-1}P_t^0\phi_0^{-1})-\mu(\phi_0)\mu_0(h\phi_0^{-1})|\\
&=|\mu_0\big(h\phi_0^{-1}[P_t^0-\mu_0]\phi_0^{-1}\big)|\leq \|h\phi_0^{-1}\|_\infty\mu_0(|[P_t^0-\mu_0]\phi_0^{-1}|)\\
&\leq \|h\phi_0^{-1}\|_\infty\|(P_t^0-\mu_0)\phi_0^{-1}\|_{L^2(\mu_0)}\leq \|h\phi_0^{-1}\|_\infty\|P_t^0-\mu_0\|_{L^2(\mu_0)\rightarrow L^2(\mu_0)}\|\phi_0^{-1}\|_{L^2(\mu_0)}\\
&\leq c \e^{-(\lambda_1-\lambda_0)t},\quad t>0,
\end{split}\end{equation*}
which together with \eqref{LT} leads to
\begin{equation}\begin{split}\label{7L3.2}
&\left|\int_0^\infty\e^{-\lambda_0s}\big[\nu(\phi_0P_s^0\phi_0^{-1})-\mu(\phi_0)\nu(\phi_0)\big]\,\Pp_{S_T^B}(\d s)\right|\\
&\leq c \int_0^\infty\e^{-\lambda_0 s}\e^{-(\lambda_1-\lambda_0)s}\,\Pp_{S_T^B}(\d s)\\
&=c\e^{-B(\lambda_1) T},\quad T>0.
\end{split}\end{equation}

By \eqref{1L3.2}, \eqref{3L3.2} with $f$ replaced by $\phi_m\phi_0^{-1}$, \eqref{5L3.2}, \eqref{7L3.2}, \eqref{SDHT-L}, and the elementary fact that
\begin{equation*}\label{6L3.2}
\int_0^t \d s_1 \int_{s_1}^t  \e^{-a(s_2-s_1)}\,\d s_2=\frac{t}{a}-\frac{1-\e^{-at}}{a^2},\quad a>0,\,t\geq0,
\end{equation*}
we can find a constant $C>0$ such that, for every $t>0$
and every $r\in(0,1]$,
\begin{equation}\begin{split}\label{8L3.2}
&\sup_{T\geq t}\left|t\E^\nu\big[\mu_0\big(|\nabla (-\mathcal{L}_0)^{-1}(\rho_{t,r}^B-1)|^2\big)|T<\sigma_\tau^B\big]-2\sum_{m=1}^\infty\frac{\e^{-2(\lambda_m-\lambda_0)r}}{(\lambda_m-\lambda_0)
[B(\lambda_m)-B(\lambda_0)]}\right|\\
&\leq\frac{C}{t}\sum_{m=1}^\infty\left(\frac{\e^{-2(\lambda_m-\lambda_0)r}}{(\lambda_m-\lambda_0)[B(\lambda_m)-B(\lambda_0)]}
+\frac{\e^{-2(\lambda_m-\lambda_0)r}}{\P^\nu(T<\si_\tau^B)(\lambda_m-\lambda_0)}\left|\int_0^t\d s_1 \int_{s_1}^t\Xi_T(s_1,s_2)\,\d s_2\right|\right),
\end{split}\end{equation}
where, for every $0<s_1<s_2\leq T$, we set
\begin{equation}\label{Xi}
\Xi_T(s_1,s_2):=\int_0^\infty\!\!\!\int_0^\infty\!\!\!\int_0^\infty \!\!\e^{-\lambda_0 (u+v+w)}({\rm J_1}+{\rm J_2}+{\rm J_3})(u,v,w)\,\Pp_{S_{T-s_2}^B}(\d w)\Pp_{S_{s_2-s_1}^B}(\d v)\Pp_{S_{s_1}^B}(\d u).
\end{equation}
(For the convenience of the reader, we include a detailed proof of \eqref{8L3.2} in Appendix.)

\textbf{(1)} Let $\alpha\in(\frac{1}{2},1]$ and $d<6\alpha-2$. Due to \eqref{PHI}, the fact that$\|\phi_m\phi_0^{-1}\|_{L^2(\mu_0)}=1$ for each $m\in\mathbb{N}$, \eqref{PI0} and \eqref{PQ0}, we deduce that, for any $\theta\in[1,3)$,
\begin{equation}\begin{split}\label{J1}
|{\rm J_1}(u,v,w)|&\le\|P_u^0(h\phi_0^{-1})-\mu_0(h\phi_0^{-1})\|_\infty\|P_w^0[\phi_0^{-1}-\mu_0(\phi_0^{-1})]\|_\infty
\mu_0\big(|\phi_m\phi_0^{-1}|P_v^0|\phi_m\phi_0^{-1}|\big)\\
&=\|P_u^0(h\phi_0^{-1})-\mu_0(h\phi_0^{-1})\|_\infty\|P_w^0[\phi_0^{-1}-\mu_0(\phi_0^{-1})]\|_\infty
\mu_0\big([P_{\ff v 2}^0|\phi_m\phi_0^{-1}|]^2\big)\\
&\le\|P_u^0(h\phi_0^{-1})-\mu_0(h\phi_0^{-1})\|_\infty\|P_w^0[\phi_0^{-1}-\mu_0(\phi_0^{-1})]\|_\infty
\mu_0\big(P_{\ff v 2}^0|\phi_m\phi_0^{-1}|^2\big)\\
&=\|P_u^0(h\phi_0^{-1})-\mu_0(h\phi_0^{-1})\|_\infty\|P_w^0[\phi_0^{-1}-\mu_0(\phi_0^{-1})]\|_\infty\\
&\leq \|h\phi_0^{-1}\|_\infty\|P_u^0-\mu_0\|_{L^\infty(\mu_0)\to L^\infty(\mu_0)} \|\phi_0^{-1}\|_{L^\theta(\mu_0)}\|P_w^0-\mu_0\|_{L^\theta(\mu_0)\to L^\infty(\mu_0)}\\
&\leq c_1 \e^{-(\lambda_1-\lambda_0)(u+w)}\{1\wedge w\}^{-\frac{d+2}{2\theta}},\quad u,v,w>0,
\end{split}\end{equation}
for some constant $c_1>0$, where in the first and the second equalities we also used the symmetry and the invariance of $(P_t^0)_{t\geq0}$ w.r.t. $\mu_0$,  respectively. Analogously, we may estimate ${\rm J_1}+{\rm J_2}$ as follows: for any $\theta\in[1,3)$,
\begin{equation}\begin{split}\label{J2J3}
&|{\rm J_2}+{\rm J_3}|(u,v,w)\\
&\leq c_2 \e^{-(\lambda_m-\lambda_0)v}\big(\|P_u^0-\mu_0\|_{L^\infty(\mu_0)\to L^\infty(\mu_0)}+\|P_w^0-\mu_0\|_{L^\theta(\mu_0)\to L^\infty(\mu_0)}\big)\\
&\leq c_3 \e^{-(\lambda_m-\lambda_0)v}\left(\e^{-(\lambda_1-\lambda_0)u}+\{1\wedge w\}^{-\frac{d+2}{2\theta}}\e^{-(\lambda_1-\lambda_0)w}\right),\quad u,v,w>0,\,m\in\mathbb{N},
\end{split}\end{equation}
for some constants $c_2,c_3>0$.

By \eqref{BR}, we find a constant $K>0$ such that
\begin{align*}
\int_0^\infty(1\wedge t)^{-\kappa}\,\Pp_{S_r^B}(\d t)
&\le 1 +   \E\big[   (S_r^B)^{-\kappa} \big] \\
&= 1 +\ff 1 {\GG(\kappa)} \int_0^\infty t^{\kappa-1} \e^{-r B(t)}\d t \\
& \leq K\big(1+r^{-\ff \kappa \aa}\big),\quad \kappa>0,\,r>0.
\end{align*}

Then, combining this with \eqref{J1} and \eqref{J2J3}, we deduce that there exist some constants $c_4,c_5,c_6,c_7>0$ such that, for every $0<s_1<s_2\leq T$,
\begin{equation}\begin{split}\label{9L3.2}
&\int_0^\infty\!\!\!\int_0^\infty\!\!\!\int_0^\infty \e^{-\lambda_0 (w+v+ u)}|{\rm J}_1(u,v,w)|\,\Pp_{S_{T-s_2}^B}(\d w)\Pp_{S_{s_2-s_1}^B}(\d v)\Pp_{S_{s_1}^B}(\d u)\\
&\leq c_4 \int_0^\infty\!\!\!\int_0^\infty\!\!\!\int_0^\infty \e^{-\lambda_0 v}\e^{-\lambda_1(u+w)}\{1\wedge w\}^{-\frac{d+2}{2\theta}}\,\Pp_{S_{T-s_2}^B}(\d w)\Pp_{S_{s_2-s_1}^B}(\d v)\Pp_{S_{s_1}^B}(\d u)\\
&=c_4 \int_0^\infty \e^{-\lambda_0 v}\,\Pp_{S_{s_2-s_1}^B}(\d v) \int_0^\infty \e^{-\lambda_1 u}\,\Pp_{S_{s_1}^B}(\d u)\int_0^\infty \e^{-\lambda_1 w}\{1\wedge w\}^{-\frac{d+2}{2\theta}}\,\Pp_{S_{T-s_2}^B}(\d w)\\
&\leq c_5 \e^{-B(\lambda_0)(s_2-s_1)}\e^{-B(\lambda_1) s_1}\e^{-B(\lambda_1) (T-s_2)}\Big[1+(T-s_2)^{-\frac{d+2}{2\alpha\theta}}\Big],
\end{split}\end{equation}
and
\begin{equation}\begin{split}\label{10L3.2}
&\int_0^\infty\!\!\!\int_0^\infty\!\!\!\int_0^\infty \e^{-\lambda_0 (w+v+ u)}|{\rm J}_2+{\rm J}_3|(u,v,w)\,\Pp_{S_{T-s_2}^B}(\d w)\Pp_{S_{s_2-s_1}^B}(\d v)\Pp_{S_{s_1}^B}(\d u)\\
&\leq c_6 \int_0^\infty\!\!\!\int_0^\infty\!\!\!\int_0^\infty \e^{-\lambda_0 w}\e^{-\lambda_m v}\e^{-\lambda_1 u} \,\Pp_{S_{T-s_2}^B}(\d w)\Pp_{S_{s_2-s_1}^B}(\d v)\Pp_{S_{s_1}^B}(\d u)\\
&\quad+c_6 \int_0^\infty\!\!\!\int_0^\infty\!\!\!\int_0^\infty \e^{-\lambda_0 u}\e^{-\lambda_m v}\e^{-\lambda_1 w} \{1\wedge w\}^{-\frac{d+2}{2\theta}}\,\Pp_{S_{T-s_2}^B}(\d w)\Pp_{S_{s_2-s_1}^B}(\d v)\Pp_{S_{s_1}^B}(\d u)\\
&=c_6 \e^{-B(\lambda_0) (T-s_2)}\e^{-B(\lambda_m)(s_2-s_1)}\e^{-B(\lambda_1)s_1}\\
&\quad+c_6 \e^{-B(\lambda_0) s_1}\e^{-B(\lambda_m)(s_2-s_1)}\int_0^\infty \e^{-\lambda_1 w}\{1\wedge w\}^{-\frac{d+2}{2\theta}}\,\Pp_{S_{T-s_2}^B}(\d w)\\
&\leq c_7\e^{-B(\lambda_0) (T-s_2)}\e^{-B(\lambda_m)(s_2-s_1)}\e^{-B(\lambda_1) s_1} \\
&\quad+c_7\e^{-B(\lambda_0) s_1}\e^{-B(\lambda_m)(s_2-s_1)}\e^{-B(\lambda_1)(T-s_2)}\Big(1+(T-s_2)^{-\frac{d+2}{2\alpha\theta}}\Big),\quad m\in\mathbb{N}.
\end{split}\end{equation}

Taking $\theta\in(\frac{d+2}{2\alpha},3)$, we have $0<\frac{d+2}{2\alpha\theta}<1$ since $d<6\alpha-2$.
By \eqref{9L3.2} and \eqref{10L3.2} and a careful computation, there exists a constant $C_1>0$ such that
\begin{equation}\begin{split}\label{10L3.2+}
\e^{B(\ll_0)T}\int_0^t \d s_1\int_{s_1}^t\Xi_T(s_1,s_2)\,\d s_2\leq C_1
\end{split}\end{equation}
holds for every $T\geq t>0$ and each $m\in\mathbb{N}$; see also the detailed proof in Appendix. Then, we infer from \eqref{8L3.2}, \eqref{BR}, \eqref{equ-B} and \eqref{EIG} that there are constants $C_2,C_3,C_4,C_5>0$ so that, for any $t\ge 1$  and  any $r\in(0,1]$,
\begin{equation*}\begin{split}\label{IL3.2}
&\sup_{T\geq t}\left|t\E^\nu\big[\mu_0\big(|\nabla (-\mathcal{L}_0)^{-1}(\rho_{t,r}^B-1)|^2\big)|T<\sigma_\tau^B\big]-2\sum_{m=1}^\infty
\frac{\e^{-2(\lambda_m-\lambda_0)r}}{(\lambda_m-\lambda_0)(B(\lambda_m)-B(\lambda_0))}\right|\\
&\leq \frac{C_2}{t}\sum_{m=1}^\infty \frac{\e^{-2(\lambda_m-\lambda_0)r}}{\lambda_m-\lambda_0}\leq\frac{C_3}{t}\int_1^\infty s^{-\frac{2}{d}}\e^{-C_4 r s^{2/d}}\,\d s\\
&\leq C_{5} t^{-1}\Big(r^{-\frac{(d-2)^+}{2}}+1_{\{d=2\}}\log r^{-1}\Big).
\end{split}\end{equation*}

\textbf{(2)} Let $\alpha\in(0,1]$ and $d\ge 6\aa-2$. Set $g:=\mu_0(|\nabla (-\mathcal{L}_0)^{-1}(\rho_{t,r}^B-1)|^2)$. Then by Lemma \ref{nu0},
there exists some constant $\kappa_0>0$ such that
\begin{equation*}\label{12L3.2}
\E^\nu(g|t<\si_\tau^B)\leq\kappa_0\E^{\nu_0}(g|t<\si_\tau^B),\quad t\ge 0.
\end{equation*}
So, we only need to prove that the desired assertion holds for $\nu=\nu_0$. In that case, we observe that $h\phi_0^{-1}=\mu(\phi_0)^{-1}$ is a constant, which implies that $J_1=J_2=0$. Next, we should deal with $J_3$.

By \eqref{EIG0}, \eqref{PQ0}, H\"{o}lder's inequality, $\mu_0(\phi_m\phi_0^{-1})=0$ and $\|\phi_m\phi_0^{-1}\|_{L^2(\mu_0)}=1$ for each $m\in\mathbb{N}$,  and $\|\phi_0\|_{L^\theta(\mu_0)}<\infty$ for every $\theta\in[1,3\wedge \frac{p}{p-1})$ and every $p\geq1$, we find a constant $c_1>0$ such that
\begin{equation*}\begin{split}
&|{\rm J_3}(u,v,w)|\\
&=\big|\mu_0(h\phi_0^{-1})\e^{-(\ll_m-\ll_0)v}\mu_0\big(\{\e^{(\ll_m-\ll_0)\ff r 2}[P_{\ff r 2}^0-\mu_0](\phi_m\phi_0^{-1})\}^2\{P_w^0\phi_0^{-1}-\mu(\phi_0)\}\big)\big|\\
&\leq \|h\phi_0^{-1}\|_\infty \e^{-(\lambda_m-\lambda_0)(v-r)}\big\||(P_{\frac{r}{2}}^0-\mu_0)(\phi_m\phi_0^{-1})|^2(P_w^0-\mu_0)\phi_0^{-1}\big\|_{L^1(\mu_0)}\\
&\le \|h\phi_0^{-1}\|_\infty \e^{-(\lambda_m-\lambda_0)(v-r)}\big\||(P_{\frac{r}{2}}^0-\mu_0)(\phi_m\phi_0^{-1})|^2\|_{L^p(\mu_0)}\|(P_w^0-\mu_0)\phi_0^{-1}\big\|_{L^{\ff p {p-1}}(\mu_0)}\\
&\leq \|h\phi_0^{-1}\|_\infty  \e^{-(\lambda_m-\lambda_0)(v-r)}\|P_{\frac{r}{2}}^0-\mu_0\|_{L^{2}(\mu_0)\to L^{2p}(\mu_0)}^2\|P_w^0-\mu_0\|_{L^\theta(\mu_0)\to L^{\frac{p}{p-1}}(\mu_0)}\|\phi_0^{-1}\|_{L^\theta(\mu_0)}\\
&\leq c_1 \e^{-(\lambda_m-\lambda_0)(v-r)-(\lambda_1-\lambda_0)w}r^{-\frac{(d+2)(p-1)}{2p}}\{1\wedge w\}^{-\frac{(d+2)[\theta-(\theta-1)p]}{2\theta p}},\quad u,v,w>0,\,r\in(0,1).
\end{split}\end{equation*}
Hence, by a similar deduction for \eqref{9L3.2}, there exists some constant $c_2>0$ such that, for every $p\in[1,\frac{\theta}{\theta-1}]$ and each $m\in\mathbb{N}$,
\begin{equation}\begin{split}\label{J3}
&\int_0^\infty\!\!\!\int_0^\infty\!\!\!\int_0^\infty \e^{-\lambda_0 (u+v+w)}|{\rm J_3}(u,v,w)|\,\Pp_{S_{T-s_2}^B}(\d w)\Pp_{S_{s_2-s_1}^B}(\d v)\Pp_{S_{s_1}^B}(\d u)\\
&\leq c_1 r^{-\frac{(d+2)(p-1)}{2p}}\e^{(\lambda_m-\lambda_0)r}\\
&\quad \times\int_0^\infty\!\!\!\int_0^\infty\!\!\!\int_0^\infty \e^{-\lambda_0 u}\e^{-\lambda_m v}\e^{-\lambda_1 w}[1\wedge w]^{-\frac{(d+2)[\theta-(\theta-1)p]}{2\theta p}}\Pp_{S_{T-s_2}^B}(\d w)\Pp_{S_{s_2-s_1}^B}(\d v)\Pp_{S_{s_1}^B}(\d u)\\
&=c_1 r^{-\frac{(d+2)(p-1)}{2p}}\e^{(\lambda_m-\lambda_0)r} \e^{-B(\lambda_0) s_1}\e^{-B(\lambda_m)(s_2-s_1)}\int_0^\infty \e^{-\lambda_1 w}[1\wedge w]^{-\frac{(d+2)[\theta-(\theta-1)p]}{2\theta p}}\,\Pp_{S_{T-s_2}^B}(\d w)\\
&\leq c_2 r^{-\frac{(d+2)(p-1)}{2p}}\e^{(\lambda_m-\lambda_0)r}\e^{[B(\lambda_m)-B(\lambda_0)]s_1}\e^{-B(\lambda_m) s_2}\e^{-B(\lambda_1)(T-s_2)}\Big((T-s_2)^{-\frac{(d+2)[\theta-(\theta-1)p]}{2\alpha\theta p}}+1\Big).
\end{split}\end{equation}
Let $p_0:=\frac{3(d+2)}{2d+4+6\alpha}$. Obviously, $p_0\ge 1$ when $d\ge 6\aa-2$. Note that $p\mapsto \frac{(d+2)(p-1)}{2p}$
is increasing, $\theta\mapsto\frac{(d+2)[\theta-(\theta-1)p]}{2\alpha\theta p}$ is deceasing, and
\begin{align*}
&\lim_{p\downarrow p_0}\frac{(d+2)(p-1)}{2p}=\frac{d+2-6\alpha}{6},\\
&\lim_{\theta\uparrow 3}\frac{(d+2)[\theta-(\theta-1)p]}{2\alpha\theta p}=\frac{(d+2)(3-2p)}{6\alpha p}\in(0,1),\quad p\in(p_0,3/2).
\end{align*}
Hence, for any $\gamma>\frac{d+2-6\alpha}{6}$, there exist $\theta_\ast\in[1,3)$ and $p_\ast\in(p_0,\frac{\theta_*}{\theta_*-1})$
such that
$$\frac{(d+2)(p_\ast-1)}{2p_\ast}\leq \gamma,\quad \delta:=\frac{(d+2)[\theta_\ast-(\theta_\ast-1)p_\ast]}{2\alpha\theta_\ast p_\ast}<1.$$
Thus, by \eqref{J3},
\begin{equation*}\begin{split}
&\int_0^\infty\!\!\!\int_0^\infty\!\!\!\int_0^\infty \e^{-\lambda_0 (u+v+w)}|{\rm J_3}(u,v,w)|\,\Pp_{S_{T-s_2}^B}(\d w)\Pp_{S_{s_2-s_1}^B}(\d v)\Pp_{S_{s_1}^B}(\d u)\\
&\leq c_3r^{-\gamma} \e^{(\lambda_m-\lambda_0)r}\e^{[B(\lambda_m)-B(\lambda_0)]s_1}\e^{-B(\lambda_m) s_2}\e^{-B(\lambda_1)(T-s_2)}\big[(T-s_2)^{-\delta}+1\big],
\end{split}\end{equation*}
for some constant $c_3>0$. This together with $J_1=J_2=0$,  we deduce similarly as \eqref{10L3.2+} that, there exists a constant $c_4>0$ such that
\begin{equation*}\begin{split}
\e^{B(\ll_0)T}\int_0^t \d s_1\int_{s_1}^t\Xi_T(s_1,s_2)\,\d s_2\leq\frac{c_4 \e^{(\lambda_m-\lambda_0)r}r^{-\gamma}}{B(\lambda_m)-B(\lambda_0)},\quad T\geq t\geq 1,\,r\in(0,1),\,m\in\mathbb{N}.
\end{split}\end{equation*}
Plugging this estimate into \eqref{8L3.2} for $\nu=\nu_0$ and every $\gamma>\frac{d+2-6\alpha}{6}$, we obtain  a constant $c_5>0$ such that, for every $t\geq 1$ and every $r\in(0,1)$,
\begin{equation}\begin{split}\label{2-1}
&\sup_{T\geq t}t\E^\nu\big[\mu_0\big(|\nabla (-\mathcal{L}_0)^{-1}(\rho_{t,r}^B-1)|^2\big)|T<\sigma_\tau^B\big]\\
&\leq c_5\sum_{m=1}^\infty\frac{\e^{-2(\lambda_m-\lambda_0)r}}{(\lambda_m-\lambda_0)[B(\lambda_m)-B(\lambda_0)]}+\frac{c_5 r^{-\gamma}}{t}\sum_{m=1}^\infty\frac{\e^{-(\lambda_m-\lambda_0)r}}{(\lambda_m-\lambda_0)[B(\lambda_m)-B(\lambda_0)]},
\end{split}\end{equation}
from which we prove \eqref{0L3.2}.

Now let $d\ge 2(1+\aa)$. Then \eqref{equ-B}, \eqref{BR} and \eqref{EIG} imply that
\begin{equation}\begin{split}\label{2-2}
&\sum_{m=1}^\infty\frac{\e^{-(\lambda_m-\lambda_0)r}}{(\lambda_m-\lambda_0)(B(\lambda_m)-B(\lambda_0))}\leq c_6\sum_{m=1}^\infty\ff{\e^{-(\ll_m-\ll_0)r}}{(\ll_m-\ll_0)^{1+\alpha}}\\
&\leq c_7\int_1^\infty s^{-\ff{2(1+\aa)}{d}}\e^{-c_8 rs^{2/d}}\,\d r\le c_9\Big(r^{-\ff{d-2(1+\aa)}{2}}+1_{\{d=2(1+\aa)\}}\log r^{-1}\Big),\quad r\in(0,1),
\end{split}\end{equation}
for some constants $c_6,c_7,c_8,c_9>0$. Gathering \eqref{2-1} and \eqref{2-2} together, we get the desired estimate \eqref{0L3.3}.

Therefore, we complete the proof.
\end{proof}

The next lemma is useful, which brings us convenience in the sequel.
\begin{lem}\label{L3.3}
Assume that $B\in\mathbf{B}$. For every $t>0$, let $\xi$ be a nonnegative random variable such that $\xi\in\si(X_s^B:\ s\leq t)$, the $\sigma$-algebra generated by $(X_s^B)_{s\in[0,t]}$.  Then, there exists a constant $c>0$ such that
$$\sup_{T\ge t}\E^\nu[\xi|T<\si_\tau^B]\leq c\E^\nu[\xi|t<\si_\tau^B],\quad t\ge 1,\,\nu\in\scr{P}_0.$$
\end{lem}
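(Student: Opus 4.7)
The plan is to exploit the Markov property of $(X_t^B)_{t\ge 0}$ at time $t$ and reduce the inequality to a uniform sup-bound on the killing tail $\Pp^x(s<\si_\tau^B)$. Since $\xi$ is $\sigma(X_s^B:\ s\le t)$-measurable, the Markov property immediately gives, for every $T\ge t$,
\begin{equation*}
\Ee^\nu[\xi 1_{\{T<\si_\tau^B\}}]=\Ee^\nu\bigl[\xi 1_{\{t<\si_\tau^B\}}\Pp^{X_t^B}(T-t<\si_\tau^B)\bigr].
\end{equation*}
So the whole question reduces to finding a constant $C_0>0$, independent of $x\in M$ and $s\ge 0$, such that $\Pp^x(s<\si_\tau^B)\le C_0\e^{-B(\ll_0)s}$. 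Once this is established, a direct combination with the lower bound \eqref{SDHT-L} will cancel the $\e^{-B(\ll_0)T}$ factors.

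To obtain the sup-bound I would start from \eqref{SDSG0} with $f=1$,
\begin{equation*}
\Pp^x(s<\si_\tau^B)=\int_0^\infty \e^{-\ll_0 u}\phi_0(x)P_u^0\phi_0^{-1}(x)\,\Pp_{S_s^B}(\d u),
\end{equation*}
and split the $u$-integral at $u=1$. On $\{u\ge 1\}$ the intrinsic ultracontractivity bound \eqref{IU0}, applied to $\phi_0^{-1}\in L^1(\mu_0)$ (finite by \eqref{PHI}), yields $P_u^0\phi_0^{-1}(x)\le (1+\alpha_1)\mu(\phi_0)$ uniformly in $x$, so this part is controlled by $(1+\alpha_1)\|\phi_0\|_\infty\mu(\phi_0)\e^{-B(\ll_0)s}$. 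On $\{u<1\}$, where $P_u^0\phi_0^{-1}(x)$ may blow up as $x\to\pp M$, I instead use the identity $\phi_0(x)P_u^0\phi_0^{-1}(x)=\e^{\ll_0 u}P_u^D 1(x)\le \e^{\ll_0}$ coming from \eqref{R} to bound this part by $\e^{\ll_0}\Pp(S_s^B\le 1)$. By the exponential Markov inequality together with \eqref{LT}, $\Pp(S_s^B\le 1)\le \e^{\ll}\e^{-sB(\ll)}$ for any $\ll\ge 0$; fixing any $\ll>\ll_0$ and using the strict monotonicity of $B$ (guaranteed by $B'(0)>0$), this tail is $o(\e^{-B(\ll_0)s})$ and hence bounded by a constant multiple of $\e^{-B(\ll_0)s}$ uniformly in $s\ge 0$, which yields the desired sup-bound.

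Inserting the sup-bound in the Markov decomposition and dividing by \eqref{SDHT-L}, i.e., $\Pp^\nu(T<\si_\tau^B)\ge \|\phi_0\|_\infty^{-1}\nu(\phi_0)\e^{-B(\ll_0)T}$, produces
\begin{equation*}
\Ee^\nu[\xi\mid T<\si_\tau^B]\le \frac{C_0\|\phi_0\|_\infty}{\nu(\phi_0)}\e^{B(\ll_0)t}\Ee^\nu[\xi 1_{\{t<\si_\tau^B\}}].
\end{equation*}
Finally, \eqref{SDHT} gives $\e^{B(\ll_0)t}\Pp^\nu(t<\si_\tau^B)\to \mu(\phi_0)\nu(\phi_0)\in(0,\infty)$ as $t\to\infty$, so the map $t\mapsto \e^{B(\ll_0)t}\Pp^\nu(t<\si_\tau^B)$ is uniformly bounded on $[1,\infty)$; rewriting $\e^{B(\ll_0)t}\Ee^\nu[\xi 1_{\{t<\si_\tau^B\}}]$ as a bounded multiple of $\Ee^\nu[\xi\mid t<\si_\tau^B]$ then completes the proof with a constant $c$ depending on $\nu$ through $\nu(\phi_0)$. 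The main obstacle is precisely the uniform sup-bound on $\Pp^x(s<\si_\tau^B)$: because $P_u^0\phi_0^{-1}$ is unbounded near $\pp M$ for small $u$, the argument must rely on the regularizing effect of the subordinator through the fast decay of $\Pp(S_s^B\le 1)$ to avoid imposing any interior restriction on $x$.
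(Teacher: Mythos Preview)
Your proof is correct and follows the same skeleton as the paper's: apply the Markov property at time $t$, use a uniform bound $\Pp^x(s<\si_\tau^B)\le C_0\e^{-B(\ll_0)s}$ to control the numerator, and use the asymptotics \eqref{SDHT} to control the denominator. The only real difference is in how the uniform tail bound is obtained. You split the subordinator integral at $u=1$, invoke intrinsic ultracontractivity on $\{u\ge 1\}$, and bound $\Pp(S_s^B\le 1)$ via the exponential Markov inequality on $\{u<1\}$. The paper instead uses the one-line observation that \eqref{DPQ} with $p=q=\infty$ gives $\|P_u^D\|_{L^\infty(\mu)\to L^\infty(\mu)}\le c\e^{-\ll_0 u}$, and subordinating this against $\Pp_{S_s^B}$ immediately yields $\|P_s^{D,B}\|_{L^\infty(\mu)\to L^\infty(\mu)}\le c\e^{-B(\ll_0)s}$, which is exactly the sup-bound you need. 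Your splitting argument works, but it is unnecessarily elaborate: the appeal to strict monotonicity of $B$ is not needed (taking $\ll=\ll_0$ in your exponential Markov bound already gives $\Pp(S_s^B\le 1)\le \e^{\ll_0}\e^{-sB(\ll_0)}$), and the intrinsic ultracontractivity step can be avoided entirely. The paper also handles the denominator slightly more economically by comparing $\Pp^\nu(T<\si_\tau^B)$ directly to $\Pp^\nu(t<\si_\tau^B)$ via \eqref{SDHT}, rather than bounding each separately; this avoids one of the two $\nu(\phi_0)$ factors in your final constant, though both routes yield a $\nu$-dependent constant.
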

\begin{proof} By the Markov property, \eqref{DPQ} with $p=q=\infty$, \eqref{SDSG0} and \eqref{LT}, there exists a constant $c_1>0$ such that
\begin{equation*}\begin{split}
\E^\nu[\xi1_{\{T<\si_\tau^B\}}]&=\E^\nu\big[\xi1_{\{t<\si_\tau^B\}}\E^{X_t^B}1_{\{T-t<\si_\tau^B\}}\big]\\
&=\E^\nu\big[\xi1_{\{t<\si_\tau^B\}}P_{T-t}^{D,B}1(X_t^B)\big]\\
&\le \E^\nu[\xi 1_{\{t<\si_\tau^B\}}]\|P_{T-t}^{D,B}\|_{L^\infty(\mu)\rightarrow L^\infty(\mu)}\\
&\le c_1\e^{-B(\ll_0)(T-t)}\E^\nu[\xi1_{\{t<\si_\tau^B\}}],\quad T\geq t>0.
\end{split}\end{equation*}
Next, from \eqref{SDHT}, we see that
$$\Pp^\nu(T<\si_\tau^B)\ge c_2\Pp^\nu(t<\si_\tau^B)\e^{-B(\ll_0)(T-t)},\quad T\geq t\geq1,$$
for some constant $c_2>0$.
Thus, by the definition of the conditional expectation, we have
\begin{equation*}\begin{split}
&\E^\nu[\xi|T<\si_\tau^B]=\frac{\E^\nu[\xi1_{\{T<\si_\tau^B\}}]}{\Pp^\nu(T<\si_\tau^B)}\\
&\le\ff{c_1\E^\nu[\xi 1_{\{t<\si_\tau^B\}}]}{c_2\Pp^\nu(t<\si_\tau^B)}=\ff{c_1}{c_2}\E^\nu[\xi|t<\si_\tau^B],\quad T\geq t\geq1.
\end{split}\end{equation*}
This implies the desired assertion.
\end{proof}

\begin{lem}\label{L3.5}
Let $B\in \BB^\aa$ for some $\aa\in(\frac{1}{2},1]$ and let $d<6\alpha-2$. Then there exists a constant $c>0$ such that, for any $p\in[1,2]$,
$$\sup_{T\geq t}\E^{\nu_0}[|\psi_m^B(t)|^{2p}|t<\sigma_\tau^B]\leq c  m^{\varsigma}t^{-p},\quad t\geq 1,\,m\in\mathbb{N}, $$
where $\varsigma:=\frac{(d+2-2\alpha)p-d-2}{d}$.
\end{lem}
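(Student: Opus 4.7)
The plan is to establish the bound separately for $p = 1$ and $p = 2$ and then interpolate for intermediate $p$. A preliminary reduction using Lemma \ref{L3.3} with $\xi = |\psi_m^B(t)|^{2p} \in \sigma(X_s^B : s \leq t)$ converts $\sup_{T \geq t}\E^{\nu_0}[\,\cdot \mid T < \sigma_\tau^B]$ into a constant multiple of $\E^{\nu_0}[\,\cdot \mid t < \sigma_\tau^B]$, so it suffices to bound the latter.

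The case $p = 1$ is essentially a byproduct of the proof of Lemma \ref{L3.2}: expanding $|\psi_m^B(t)|^2$ as a double time integral, applying the Markov expansion leading to \eqref{3L3.2}, and retaining only the $m$-th spectral contribution yield
$$\E^{\nu_0}\bigl[|\psi_m^B(t)|^2 \bigm| t < \sigma_\tau^B\bigr] \leq \frac{C}{t\bigl(B(\lambda_m) - B(\lambda_0)\bigr)} \leq \frac{C'}{t\, m^{2\alpha/d}},$$
where the second inequality uses $B(\lambda_m) - B(\lambda_0) \gtrsim m^{2\alpha/d}$ from \eqref{BR} and \eqref{EIG}. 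This matches $\varsigma|_{p=1} = -2\alpha/d$.

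The case $p = 2$ is the main technical step. Writing
$$|\psi_m^B(t)|^4 = \frac{4!}{t^4} \int_{\{0 < s_1 < s_2 < s_3 < s_4 < t\}} \prod_{i=1}^4 (\phi_m \phi_0^{-1})(X_{s_i}^B)\, \d s_1\, \d s_2\, \d s_3\, \d s_4,$$
I would apply the Markov property four times (in the spirit of \eqref{3L3.2}) and use the Doob transform identity $P_s^{D,B}(\phi_0 g) = \phi_0\, Q_s^B g$ with $Q_s^B := \int_0^\infty \e^{-\lambda_0 r} P_r^0\, \Pp_{S_s^B}(\d r)$, which is diagonal with eigenvalue $\e^{-B(\lambda_n) s}$ on $\phi_n \phi_0^{-1}$, to recast the four-point correlation on the ordered simplex as
$$\mu(\phi_0)^{-1}\, \mu_0\Bigl(Q_{s_1}^B\bigl[f\, Q_{s_2-s_1}^B[f\, Q_{s_3-s_2}^B[f\, Q_{s_4-s_3}^B[f\, Q_{t-s_4}^B(\phi_0^{-1})]]]\bigr]\Bigr),$$
with $f := \phi_m \phi_0^{-1}$. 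Expanding each $f\cdot g$ in the eigenbasis $\{\phi_n \phi_0^{-1}\}_{n \in \mathbb{N}_0}$, the dominant contributions from pairings of the four $f$-factors produce two factors of $(B(\lambda_m) - B(\lambda_0))^{-1}$ after integrating over the three inner time gaps, while the remaining quartic mass in $f$ is controlled by $\|\phi_m \phi_0^{-1}\|_{L^4(\mu_0)}^4 \leq \|\phi_m \phi_0^{-1}\|_\infty^2\, \|\phi_m \phi_0^{-1}\|_{L^2(\mu_0)}^2 \lesssim m^{(d+2)/d}$ via \eqref{EIG0UB}. The hypothesis $d < 6\alpha - 2$ enters exactly as in Lemma \ref{L3.2}(1): it guarantees integrability of $(1 \wedge w)^{-(d+2)/(2\alpha\theta)}$ against $\Pp_{S_\cdot^B}$ for some $\theta \in (\frac{d+2}{2\alpha}, 3)$, taming the time-singular factors at the endpoints produced by \eqref{PQ0} together with \eqref{PHI}. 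Combining these pieces gives
$$\E^{\nu_0}\bigl[|\psi_m^B(t)|^4 \bigm| t < \sigma_\tau^B\bigr] \leq \frac{C\, m^{(d+2)/d}}{t^2 (B(\lambda_m) - B(\lambda_0))^2} \lesssim \frac{m^{(d+2-4\alpha)/d}}{t^2}.$$

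For intermediate $p \in (1, 2)$, since $2p = 2(2-p) + 4(p-1)$ with $(2-p) + (p-1) = 1$, H\"older's inequality applied to $\Pp^{\nu_0}(\,\cdot \mid t < \sigma_\tau^B)$ gives
$$\E^{\nu_0}\bigl[|\psi_m^B(t)|^{2p} \bigm| t < \sigma_\tau^B\bigr] \leq \E^{\nu_0}\bigl[|\psi_m^B(t)|^2 \bigm| t < \sigma_\tau^B\bigr]^{2-p}\, \E^{\nu_0}\bigl[|\psi_m^B(t)|^4 \bigm| t < \sigma_\tau^B\bigr]^{p-1},$$
and substituting the $p = 1$ and $p = 2$ bounds yields $c\, m^\varsigma\, t^{-p}$ with $\varsigma = [(d+2)(p-1) - 2\alpha p]/d$. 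The main obstacle is the $p = 2$ estimate, specifically the careful bookkeeping required to distribute $L^\infty(\mu_0)$ and $L^2(\mu_0)$ norms across the four copies of $\phi_m \phi_0^{-1}$: a na\"{\i}ve application of \eqref{EIG0UB} at every factor would yield $\|\phi_m \phi_0^{-1}\|_\infty^4 \sim m^{2(d+2)/d}$, much worse than the required $m^{(d+2)/d}$.
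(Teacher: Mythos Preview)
Your overall architecture matches the paper: the reduction to $T=t$ via Lemma~\ref{L3.3}, the H\"older interpolation
\[
\E^{\nu_0}\bigl[|\psi_m^B(t)|^{2p}\bigm|t<\sigma_\tau^B\bigr]\le
\bigl(\E^{\nu_0}[|\psi_m^B(t)|^{2}\mid t<\sigma_\tau^B]\bigr)^{2-p}\,
\bigl(\E^{\nu_0}[|\psi_m^B(t)|^{4}\mid t<\sigma_\tau^B]\bigr)^{p-1},
\]
and the treatment of the $p=1$ case are all exactly what the paper does.

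The divergence is in the fourth-moment bound. The paper does \emph{not} expand the four-point function directly. Instead it sets
\[
I(s):=s^4\e^{B(\lambda_0)s}\,\E^{\nu_0}\bigl[|\psi_m^B(s)|^4\,1_{\{s<\sigma_\tau^B\}}\bigr],
\]
conditions on $\sigma(X_r^B:r\le r_1)$ to pull out the two inner factors as
$\eta_s(r_1,r_2):=\E^{\nu_0}[1_{\{s<\sigma_\tau^B\}}\phi_0^m(X_{r_1}^B)\phi_0^m(X_{r_2}^B)\mid\sigma(X_r^B:r\le r_1)]$,
and then applies Cauchy--Schwarz \emph{in the probability space} to the product of $\eta_s$ and the squared time-integral. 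This produces the self-referencing bound
\[
I(s)\le 12\,\Bigl(\sup_{r\in[0,s]}\sqrt{I(r)}\Bigr)\int_0^s\!\!\int_{r_1}^s\bigl\{\e^{B(\lambda_0)(2s-r_1)}\E^{\nu_0}[1_{\{r_1<\sigma_\tau^B\}}\eta_s(r_1,r_2)^2]\bigr\}^{1/2}\,\d r_2\,\d r_1,
\]
so that $\sup_{s\le t}I(s)$ is bounded by the square of the double integral. One then only has to estimate $\E^{\nu_0}[1_{\{r_1<\sigma_\tau^B\}}\eta_s(r_1,r_2)^2]$, which is essentially a second-moment computation and costs exactly $\|\phi_m\phi_0^{-1}\|_\infty^2$; squaring and integrating delivers $I(t)\lesssim t^2 m^{(d+2-4\alpha)/d}$.

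Your direct route---expanding the nested $Q^B$ operators in the eigenbasis and invoking ``dominant pairings''---is not obviously wrong, but the pairing heuristic is borrowed from Gaussian analysis and does not apply here without justification: the products $\phi_m\phi_0^{-1}\cdot\phi_n\phi_0^{-1}$ have no reason to collapse to diagonal terms. What actually works in a direct expansion is to use symmetry of $P^0$ and the eigenfunction identity to extract $\e^{-(\lambda_m-\lambda_0)u_2}$ from the outermost gap and $\e^{-(\lambda_m-\lambda_0)u_4}$ from the innermost (after replacing $P^0_{u_5}\phi_0^{-1}$ by its mean $\mu(\phi_0)$), leaving $\mu_0(f^2 P^0_{u_3}[f^2])\le\|f\|_\infty^2$ in the middle; this gives the main term you want. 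The difficulty you have not addressed is the \emph{remainder} $(P^0_{u_5}-\mu_0)\phi_0^{-1}$: once that fluctuation is present you can no longer peel off $\e^{-(\lambda_m-\lambda_0)u_4}$ by the eigenfunction trick, and a crude $L^\infty$ bound on the inner block loses the decay in two of the three time gaps, yielding only $t^{-1}$ rather than $t^{-2}$. Controlling this requires either a further decomposition of $\mu_0(f^2\tilde g)$ around $\mu_0(\tilde g)$ or an argument of comparable subtlety---none of which is in your sketch. The paper's bootstrap sidesteps all of this by never needing more than a two-point estimate on $\eta_s$.
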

\begin{proof}
We only need to prove the case when $T=t$ instead of for all $T\geq t$ due to Lemma \ref{L3.3}. Applying H\"{o}lder's inequality, we obtain
\begin{equation*}\begin{split}
&\E^{\nu_0}[|\psi_m^B(t)|^{2p}|t<\sigma_\tau^B]=\E^{\nu_0}[|\psi_m^B(t)|^{4-2p}|\psi_m^B(t)|^{4p-4}|t<\sigma_\tau^B]\\
&\leq \big\{\E^{\nu_0}[|\psi_m^B(t)|^2|t<\sigma_\tau^B]\big\}^{2-p}\big\{\E^{\nu_0}[|\psi_m^B(t)|^4|t<\sigma_\tau^B]\big\}^{p-1},\quad t>0.
\end{split}\end{equation*}
According to \eqref{SDHT}, it is enough to find a constant $c>0$ such that the following two inequalities hold, i.e.,
\begin{equation}\label{1L3.5}
\E^{\nu_0}[|\psi_m^B(t)|^2 1_{\{t<\sigma_\tau^B\}}]\leq cm^{-\frac{2\alpha}{d}}\e^{-B(\lambda_0) t}t^{-1},\quad t\geq 1,\,m\in\mathbb{N}, 
\end{equation}
and
\begin{equation}\label{2L3.5}
\E^{\nu_0}[|\psi_m^B(t)|^4 1_{\{t<\sigma_\tau^B\}}]\leq c m^{\frac{d+2-4\alpha}{d}}\e^{-B(\lambda_0) t}t^{-2},\quad t\geq 1,\,m\in\mathbb{N}. 
\end{equation}

Next, we prove \eqref{1L3.5} and \eqref{2L3.5} separately. For convenience, for each $m\in\mathbb{N}$, we denote $\phi_m\phi_0^{-1}$ by $\phi^m_0$ in the following proof.

\textbf{(i)} By Fubini's theorem, we have
\begin{equation}\label{3L3.5}
\E^{\nu_0}[|\psi_m^B(t)|^2 1_{\{t<\sigma_\tau^B\}}]=\frac{2}{t^2}\int_0^t \d s_1\int_{s_1}^t \E^{\nu_0}[1_{\{t<\sigma_\tau^B\}}\phi^m_0(X_{s_1}^B)\phi^m_0(X_{s_2}^B)]\,\d s_2,\quad t>0.
\end{equation}
Similar as \eqref{3L3.2}, we obtain
\begin{equation*}\begin{split}\label{4L3.5}
&\E^{\nu_0}[1_{\{t<\sigma_\tau^B\}}\phi^m_0(X_{s_1}^B)\phi^m_0(X_{s_2}^B)]\\
&=\int_0^\infty\!\!\!\int_0^\infty\!\!\!\int_0^\infty \e^{-\lambda_0 (u+v+w)}\nu_0\big(\phi_0 P_u^0[\phi^m_0P_v^0(\phi^m_0 P_w^0\phi_0^{-1})]\big)\,\Pp_{S_{t-s_2}^B}(\d w)\Pp_{S_{s_2-s_1}^B}(\d v)\Pp_{S_{s_1}^B}(\d u).
\end{split}\end{equation*}
Noting that $\mu_0(|\phi^m_0|^2)=1$ for every $m\in\mathbb{N}$, by \eqref{EIG0}, \eqref{PQ0}, \eqref{PHI}, the symmetry and the invariance of $(P_t^0)_{t\geq0}$ w.r.t. $\mu_0$, we find a constant $c_1>0$ such that, for every $p\in[1,3)$,
\begin{equation*}\begin{split}
&\nu_0\big(\phi_0 P_u^0[\phi^m_0P_v^0(\phi^m_0 P_w^0\phi_0^{-1})]\big)=\frac{1}{\mu(\phi_0)}\mu_0\big(\phi^m_0P_v^0(\phi^m_0 P_w^0\phi_0^{-1})\big)\\
&=\frac{\e^{-(\lambda_m-\lambda_0)v}}{\mu(\phi_0)}\mu_0(|\phi^m_0|^2 P_w^0\phi_0^{-1})
=\frac{\e^{-(\lambda_m-\lambda_0)v}}{\mu(\phi_0)}\big[\mu_0\big(|\phi^m_0|^2 (P_w^0-\mu_0)\phi_0^{-1}\big) +\mu_0(\phi_0^{-1})\big]\\
&\leq\frac{\e^{-(\lambda_m-\lambda_0)v}}{\mu(\phi_0)}\big[\mu_0(|\phi^m_0|^2) \|P_w^0-\mu_0\|_{L^p(\mu_0)\rightarrow L^\infty(\mu_0)}\|\phi_0^{-1}\|_{L^p(\mu_0)} +\mu_0(\phi_0^{-1})\big]\\
&\leq c_1 \e^{-(\lambda_m-\lambda_0)v} \left[(1\wedge w)^{-\frac{d+2}{2p}}\e^{-(\lambda_1-\lambda_0)w}+1\right], \quad u,v,w>0.
\end{split}\end{equation*}
Hence
\begin{equation*}\begin{split}\label{4L3.5+}
&\E^{\nu_0}[1_{\{t<\sigma_\tau^B\}}\phi^m_0(X_{s_1}^B)\phi^m_0(X_{s_2}^B)]\\
&\leq c_1\int_0^\infty\!\!\!\int_0^\infty\!\!\!\int_0^\infty \e^{-\lambda_0u} \e^{-\lambda_m v}\e^{-\lambda_1 w} (1\wedge w)^{-\frac{d+2}{2p}} \,\Pp_{S_{t-s_2}^B}(\d w)\Pp_{S_{s_2-s_1}^B}(\d v)\Pp_{S_{s_1}^B}(\d u)\\
&\quad+c_1\int_0^\infty\!\!\!\int_0^\infty\!\!\!\int_0^\infty \e^{-\ll_0 u}\e^{-\ll_m v}\e^{-\ll_0 w}\,\Pp_{S_{t-s_2}^B}(\d w)\Pp_{S_{s_2-s_1}^B}(\d v)\Pp_{S_{s_1}^B}(\d u)\\
&\leq c_2\e^{-B(\lambda_0)t}\e^{[B(\lambda_m)-B(\lambda_0)]s_1}\e^{-[B(\lambda_m)-B(\lambda_0)]s_2}\Big((t-s_2)^{-\frac{d+2}{2\alpha p}}+1\Big),\quad 0<s_1<s_2<t,
\end{split}\end{equation*}
for some constant $c_2>0$. Substituting this estimate into \eqref{3L3.5} and taking some $p\in(1,3)$ such that $\frac{d+2}{2\alpha p}\in(0,1)$, by \eqref{equ-B}, \eqref{BR} and \eqref{EIG}, we have some constants $c_3,c_4>0$ such that
 $$\E^{\nu_0}[|\psi_m^B(t)|^2 1_{\{t<\sigma_\tau^B\}}]\leq c_3 \frac{\e^{-B(\lambda_0)t}}{[B(\lambda_m)-B(\lambda_0)]t}\leq c_4 \frac{\e^{-B(\lambda_0)t}}{m^{2\alpha/d}t},\quad t\geq1,\,m\in\mathbb{N},$$
 which shows \eqref{1L3.5}.

\textbf{(ii)} For any $s>0$, by Fubini's theorem and the Markov property, we have
\begin{equation}\begin{split}\label{5L3.5}
&s^4\E^{\nu_0}[|\psi_m^B(s)|^4 1_{\{s<\sigma_\tau^B\}}]\\
&=24\int_0^s \d s_1\int_{s_1}^s \d s_2\int_{s_2}^s \d s_3\int_{s_3}^s\E^{\nu_0}[1_{\{s<\sigma_\tau^B\}}
\phi_0^m(X_{s_1}^B)\phi_0^m(X_{s_2}^B)\phi_0^m(X_{s_3}^B)\phi_0^m(X_{s_4}^B)]\,\d s_4\\
&=24\int_0^s \d s_1\int_{s_1}^s \d s_2\int_{s_2}^s \d s_3\int_{s_3}^s\E^{\nu_0}[1_{\{s<\sigma_\tau^B\}}\phi_0^m(X_{s_1}^B)\phi_0^m(X_{s_2}^B)\eta_s(s_3,s_4)]\,\d s_4,
\end{split}\end{equation}
where
$$\eta_s(s_3,s_4):=\E^{\nu_0}[1_{\{s<\sigma_\tau^B\}}\phi_0^m(X_{s_3}^B)\phi_0^m(X_{s_4}^B)|\sigma(X_r^B:\ r\leq s_3)],\quad 0<s_3<s_4<s.$$
By the Markov property
and \eqref{SDSG0}, for every $0<s_3<s_4<s$, $\eta_s(s_3,s_4)$ can be written  as
\begin{equation}\begin{split}\label{6L3.5}
\eta_s(s_3,s_4)&=\phi_0^m(X_{s_3}^B)\E^{X_{s_3}^B}[1_{\{s-s_3<\sigma_\tau^B\}}\phi_0^m(X_{s_4-s_3}^B)]\\
&=\int_0^\infty\!\!\!\int_0^\infty \e^{-\lambda_0 u}\e^{-\lambda_0 v}\{\phi_0\phi_0^m P_v^0(\phi_0^m P_u^0\phi_0^{-1})\}(X_{s_3}^B)\,\Pp_{S_{s-s_4}^B}(\d u)\Pp_{S_{s_4-s_3}^B}(\d v).
\end{split}\end{equation}
According to Fubini's theorem, the Markov property, \eqref{5L3.5} and \eqref{6L3.5}, we have
\begin{equation}\begin{split}\label{7L3.5}
I(s):&=s^4 \e^{B(\ll_0) s}\E^{\nu_0}[|\psi_m^B(s)|^4 1_{\{s<\sigma_\tau^B\}}]\\
&=12 \e^{ B(\ll_0) s}\int_0^s \d r_1\int_{r_1}^s \E^{\nu_0}\left[1_{\{r_1<\si_\tau^B\}}\eta_s(r_1,r_2)\left|\int_0^{r_1}\phi_0^m(X_r^B)\d r\right|^2\right]\,\d r_2\\
&=12\int_0^s \d r_1 \int_{r_1}^s \E^{\nu_0}\left[1_{\{r_1<\si_\tau^B\}}\e^{B(\ll_0) s-\ff{B(\ll_0) r_1} 2}\eta_s(r_1,r_2)\,\e^{\ff{B(\ll_0) r_1} 2}\left|\int_0^{r_1}\phi_0^m(X_r^B)\d r\right|^2\right]\,\d r_2\\
&\le12\int_0^s \d r_1\int_{r_1}^s\big(\E^{\nu_0}[1_{\{r_1<\si_\tau^B\}}\e^{2B(\ll_0)s-B(\ll_0) r_1}\eta_s(r_1,r_2)^2]\big)^{\ff 1 2}\\
&\quad\times \left(\E^{\nu_0}\left[\e^{B(\ll_0) r_1}\left|\int_0^{r_1}\phi_0^m(X_r^B)\d r\right|^4 1_{\{r_1<\si_\tau^B\}}\right]\right)^{\ff 1 2}\,\d r_2\\
&=12\int_0^s \d r_1\int_{r_1}^s\big(\E^{\nu_0}[1_{\{r_1<\si_\tau^B\}}\e^{2B(\ll_0)s-B(\ll_0) r_1}\eta_s(r_1,r_2)^2]\big)^{\ff 1 2}\\
&\quad\times \big(\E^{\nu_0}[\e^{B(\ll_0) r_1}r_1^4 |\psi_m^B(r_1)|^4 1_{\{r_1<\si_\tau^B\}}]\big)^{\ff 1 2}\,\d r_2\\
&\le 12\Big(\sup_{r\in [0,s]}\sqrt{I(r)}\Big)\int_0^s \d r_1 \int_{r_1}^s\big\{\e^{2B(\ll_0) s-B(\ll_0) r_1}\E^{\nu_0}[1_{\{r_1<\si_\tau^B\}}|\eta_s(r_1,r_2)|^2]\big\}^{\ff 1 2}\,\d r_2,\quad s>0,
\end{split}\end{equation}
which clearly implies that, for every $t>0$,
\begin{equation}\begin{split}\label{8L3.5}
I(t)\le \sup_{s\in[0,t]}I(t)\le\left(12\sup_{s\in[0,t]}\int_0^s \d r_1\int_{r_1}^s\left\{\e^{B(\ll_0)(2 s- r_1)}\E^{\nu_0}[1_{\{r_1<\si_\tau^B\}}|\eta_s(r_1,r_2)|^2]\right\}^{\ff 1 2}\,\d r_2\right)^2.
\end{split}\end{equation}
So, it remains to estimate the upper bound of the rightmost term in \eqref{8L3.5}.

For convenience, let
$$g(x)=\int_0^\infty\!\!\!\int_0^\infty \e^{-\ll_0 (u+v)}\{\phi_0^m\phi_0 P_v^0(\phi_0^m P_u^0\phi_0^{-1})\}(x)\,\P_{S_{s-r_2}^B}(\d u)\P_{S_{r_2-r_1}^B}(\d v),\quad x\in M.$$
Then, by \eqref{6L3.5}, \eqref{SDSG0}, Fubini's theorem and the invariance of $(P_t)_{t\geq0}$ w.r.t. $\mu_0$, we derive that
\begin{equation*}\begin{split}\label{9L3.5}
&\E^{\nu_0}[1_{\{r_1<\si_\tau^B\}}|\eta_s(r_1,r_2)|^2]=\E^{\nu_0}[1_{\{r_1<\si_\tau^B\}}|g(X_{r_1}^B)|^2]\\
&=\ff 1 {\mu(\phi_0)}\int_M \phi_0^{-1}(x)P_{r_1}^{D,B}(|g|^2)(x)\,\mu_0(\d x)\\
&=\ff 1 {\mu(\phi_0)}\int_M \int_0^\infty \e^{-\lambda_0 r} P_r^0(|g|^2\phi_0^{-1})(x)\,\P_{S_{r_1}^B}(\d r)\mu_0(\d x)\\
&=\ff 1 {\mu(\phi_0)}\int_0^\infty \e^{-\lambda_0 r} \mu_0(|g|^2\phi_0^{-1})\,\P_{S_{r_1}^B}(\d r),
\end{split}\end{equation*}
where
\begin{equation*}\begin{split}\label{9L3.5+}
&\mu_0(|g|^2\phi_0^{-1})=\mu_0\left(\phi_0\Big|\int_0^\infty\!\!\!\int_0^\infty \e^{-\ll_0 (u+v)} \phi_0^m P_v^0(\phi_0^m P_u^0\phi_0^{-1}) \,\P_{S_{s-r_2}^B}(\d u)\P_{S_{r_2-r_1}^B}(\d v)\Big|^2\right)\\
&\leq 2 \mu(\phi_0)^2\mu_0\left(\phi_0\Big|\int_0^\infty\!\!\!\int_0^\infty \e^{-\ll_0 (u+v)} \phi_0^mP_v^0\phi_0^m\,\P_{S_{s-r_2}^B}(\d u)\P_{S_{r_2-r_1}^B}(\d v)\Big|^2\right)\\
&\quad +2\mu_0\left(\phi_0\Big|\int_0^\infty\!\!\!\int_0^\infty \e^{-\ll_0 (u+v)}  \phi_0^mP_v^0(\phi_0^m [P_u^0-\mu_0]\phi_0^{-1}) \,\P_{S_{s-r_2}^B}(\d u)\P_{S_{r_2-r_1}^B}(\d v)\Big|^2\right)\\
&=: {\rm I_1}+{\rm I_2}.
\end{split}\end{equation*}
By \eqref{EIG0}, \eqref{LT} and $\mu_0(|\phi_0^m|^2)=1$ for each $m\in\mathbb{N}$, it is easy to see that
\begin{equation*}\begin{split}\label{I-1}
 {\rm I_1}&=2 \mu(\phi_0)^2\mu_0(\phi_0|\phi_0^m|^4)\e^{-2B(\lambda_0)(s-r_2)}\e^{-2B(\lambda_m)(r_2-r_1)}\\
 &\leq2\mu(\phi_0)^2\|\phi_0\|_\infty\|\phi_0^m\|_\infty^2\e^{-2B(\lambda_0)(s-r_2)}\e^{-2B(\lambda_m)(r_2-r_1)},
\end{split}\end{equation*}
for any $m\in\mathbb{N}$ and any $0<r_1<r_2<s$. By the symmetry of $(P_t^0)_{t>0}$, \eqref{EIG0}, and $\|\phi_0^m\|_{L^2(\mu_0)}=1$ for each $m\in\mathbb{N}$, we obtain
\begin{equation*}\begin{split}\label{12L3.5}
&\|\phi_0^mP_v^0(\phi_0^m [P_u^0-\mu_0]\phi_0^{-1})\|_{L^2(\mu_0)}\\
&=\e^{-(\lambda_m-\lambda_0)v} \||\phi_0^m|^2 [P_u^0-\mu_0]\phi_0^{-1}\|_{L^2(\mu_0)}\\
&\leq\e^{-(\lambda_m-\lambda_0)v}\|\phi_0^m\|_\infty\|[P_u^0-\mu_0]\phi_0^{-1}\|_{L^\infty(\mu_0)} \|\phi_0^m\|_{L^2(\mu_0)}\\
&\leq\e^{-(\lambda_m-\lambda_0)v}\|\phi_0^m\|_\infty \|P_u^0-\mu_0\|_{L^p{(\mu_0)}\to L^\infty(\mu_0)}\|\phi_0^{-1}\|_{L^p(\mu_0)},\quad m\in\mathbb{N},\,u,v>0,
\end{split}\end{equation*}
for every $p\in(\ff{d+2}{2\aa},3)$.
Then, by Minkowski's inequality, \eqref{PHI}, \eqref{PQ0} and \eqref{LT}, for any $p\in(\ff{d+2}{2\aa},3)$, we find a constant $c_5>0$ such that,  for any $m\in\mathbb{N}$ and any $0<r_1<r_2<s$,
\begin{equation*}\begin{split}\label{12L3.5}
\sqrt{\rm I_2}
&\leq \sqrt{2}\|\phi_0\|_\infty^{\ff 1 2}
\int_0^\infty\!\!\!\int_0^\infty \e^{-\ll_0 (u+v)}  \|\phi_0^mP_v^0(\phi_0^m [P_u^0-\mu_0]\phi_0^{-1})\|_{L^2(\mu_0)} \,\P_{S_{s-r_2}^B}(\d u)\P_{S_{r_2-r_1}^B}(\d v)\\
&\leq c_5\|\phi_0^m\|_\infty
\int_0^\infty\!\!\!\int_0^\infty \e^{-\ll_m v}\e^{-\ll_1 u}(1\wedge u)^{-\frac{d+2}{2p}}
\,\P_{S_{s-r_2}^B}(\d u)\P_{S_{r_2-r_1}^B}(\d v)\\
&=c_5\|\phi_0^m\|_\infty\e^{-B(\lambda_m)(r_2-r_1)}\e^{-B(\lambda_1)(s-r_2)}\Big[1+(s-r_2)^{-\frac{d+2}{2p\alpha}}\Big],
\end{split}\end{equation*}
or
\begin{equation*}\begin{split}\label{I-2}
 {\rm I_2}\leq c_5^2\|\phi_0^m\|_\infty^2\e^{-2B(\lambda_m)(r_2-r_1)}\e^{-2B(\lambda_0)(s-r_2)}\Big[1+(s-r_2)^{-\frac{d+2}{2p\alpha}}\Big]^2,
 \quad m\in\mathbb{N},\,0<r_1<r_2<s.
\end{split}\end{equation*}
Thus, by \eqref{EIG0UB} and \eqref{LT}, for any $p\in(\ff{d+2}{2\aa},3)$, we have a constant $c_6>0$ such that, for every $0<r_1<r_2<s$ and each $m\in\mathbb{N}$,
\begin{equation}\begin{split}\label{I}
&\E^{\nu_0}[1_{\{r_1<\si_\tau^B\}}|\eta_s(r_1,r_2)|^2]\leq\frac{1}{\mu(\phi_0)}\int_0^\infty \e^{-\lambda_0 r}({\rm I_1}+{\rm I_2})\,\P_{S_{r_1}^B}(\d r)\\
&\leq c_6\|\phi_0^m\|_\infty^2\e^{-B(\ll_0)(2s-r_1)}\e^{-2[B(\ll_m)-B(\ll_0)](r_2-r_1)}\Big[1+(s-r_2)^{-\frac{d+2}{2p\alpha}}\Big]^2.
\end{split}\end{equation}

According to \eqref{8L3.5}, \eqref{I}, \eqref{BR}, \eqref{equ-B}, \eqref{EIG0UB} and \eqref{EIG}, we arrive at
$$I(t)\leq c_7t^2 m^{\frac{d+2-4\alpha}{d}},\quad m\in\mathbb{N},\,t\geq1,$$
for some constant $c_7>0$. By the definition of $I(t)$ in  \eqref{7L3.5}, we prove \eqref{2L3.5}.

Therefore, the proof is finished.
\end{proof}

\begin{lem}\label{L3.6}
Let $B\in \BB^\aa$ for some $\aa\in(\ff 1 2,1]$ and $d<6\aa-2$. Then, for every $p\in(1,\frac{2d+6}{3d+4-2\alpha}\wedge \ff {d+2}{d+1})$, there exists a constant $c>0$ such that
$$\sup_{r>0,\,T\ge t}\E^{\nu_0}[\mu_0(|\nabla (-\mathcal{L}_0)^{-1}(\rho^B_{t,r}-1)|^{2p})|T<\si_\tau^B]\le c t^{-p},\quad t\ge 1.$$
\end{lem}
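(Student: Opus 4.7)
The plan is to expand $\rho_{t,r}^B-1$ in the spectral basis, apply Minkowski's inequality in $L^p$ to reduce to moments of the Fourier coefficients $\psi_m^B(t)$, and then invoke Lemma~\ref{L3.5}. As in the opening of the proof of Lemma~\ref{L3.2} (using orthonormality of $\{(\lambda_m-\lambda_0)^{-1/2}\nabla(\phi_m\phi_0^{-1})\}_{m\ge 1}$ in $\overrightarrow{L}^2(\mu_0)$), one has the identity
\[
\mu_0\big(|\nabla(-\mathcal{L}_0)^{-1}(\rho_{t,r}^B-1)|^2\big) = \sum_{m=1}^\infty a_m\, |\psi_m^B(t)|^2, \qquad a_m:=\frac{e^{-2(\lambda_m-\lambda_0)r}}{\lambda_m-\lambda_0},
\]
so that $a_m\le \alpha_0 m^{-2/d}$ uniformly in $r>0$ by \eqref{EIG}. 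Since $p>1$ and the summands are nonnegative, Minkowski's inequality in $L^p(\Pp^{\nu_0}(\cdot|T<\sigma_\tau^B))$ gives, for every $T\ge t$,
\[
\E^{\nu_0}\!\big[\mu_0(|\nabla(-\mathcal{L}_0)^{-1}(\rho_{t,r}^B-1)|^2)^p\,\big|\,T<\sigma_\tau^B\big]^{1/p}
\le \sum_{m=1}^\infty a_m\, \E^{\nu_0}\!\big[|\psi_m^B(t)|^{2p}\,\big|\,T<\sigma_\tau^B\big]^{1/p}.
\]

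Second, the hypothesis $p<\frac{d+2}{d+1}\le 2$ places $p$ in the range of Lemma~\ref{L3.5}, which yields, for $T\ge t\ge 1$ and $m\ge 1$,
\[
\E^{\nu_0}\!\big[|\psi_m^B(t)|^{2p}\,\big|\,T<\sigma_\tau^B\big]^{1/p}\le c^{1/p}\, m^{\varsigma/p}\, t^{-1},\qquad \varsigma=\frac{(d+2-2\alpha)p-(d+2)}{d}.
\]
Substituting this into the Minkowski bound, using $a_m\le \alpha_0 m^{-2/d}$, and raising to the $p$-th power gives
\[
\E^{\nu_0}\!\big[\mu_0(|\nabla(-\mathcal{L}_0)^{-1}(\rho_{t,r}^B-1)|^{2p})\,\big|\,T<\sigma_\tau^B\big]
\le C\, t^{-p}\left(\sum_{m=1}^\infty m^{\varsigma/p-2/d}\right)^p.
\]

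The remaining and principal step is to verify the convergence of the last series uniformly in $r$. A direct rearrangement shows that $\varsigma/p-2/d<-1$ is equivalent to $p<\frac{d+2}{2(d-\alpha)}$ when $d>\alpha$ (and is automatic otherwise). Under the restriction $\alpha\in(\frac{1}{2},1]$ and $d<6\alpha-2$, a short algebraic check verifies that $\frac{2d+6}{3d+4-2\alpha}\wedge\frac{d+2}{d+1}$ is always strictly smaller than $\frac{d+2}{2(d-\alpha)}$, so the series converges and the claimed bound $\le c t^{-p}$, uniform in $r>0$ and $T\ge t\ge 1$, follows.
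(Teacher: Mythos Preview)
Your argument has a genuine gap: you conflate two different quantities. The spectral identity you start from gives
\[
\mu_0\big(|\nabla(-\mathcal{L}_0)^{-1}(\rho_{t,r}^B-1)|^2\big)=\sum_{m\ge 1}a_m|\psi_m^B(t)|^2,
\]
which is the \emph{square of the $L^2(\mu_0)$ norm} of $\nabla(-\mathcal{L}_0)^{-1}(\rho_{t,r}^B-1)$. Your Minkowski step then bounds $\E\big[\big(\mu_0(|\nabla\cdots|^2)\big)^p\big]$, i.e.\ the $p$-th moment of that $L^2$ norm squared. But the lemma asks for $\E\big[\mu_0(|\nabla\cdots|^{2p})\big]$, the expectation of the $L^{2p}(\mu_0)$ norm raised to the power $2p$. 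For $p>1$ and a probability measure $\mu_0$, Jensen gives $\big(\mu_0(|g|^2)\big)^p\le \mu_0(|g|^{2p})$, so you have bounded the \emph{smaller} quantity and cannot conclude the lemma from it. The slip is at ``raising to the $p$-th power gives\ldots'', where $\mu_0(|\cdot|^2)^p$ silently became $\mu_0(|\cdot|^{2p})$.

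The $L^{2p}$ quantity is genuinely what is needed downstream: in the proof of Theorem~\ref{T1.2}(1) the estimate of ${\rm I}_2(t)$ applies H\"older \emph{inside} the $\mu_0$ integral before taking expectation, which requires control of $\mu_0(|\nabla\cdots|^{2p})$, not merely $(\mu_0(|\nabla\cdots|^2))^p$. To access the spatial $L^{2p}$ norm one must work pointwise in $x$; the paper does this via the gradient estimate $|\nabla P_s^0 f|\le c\,\phi_0^{-\varsigma}(1\wedge s)^{-1/2}(P_{s}^0|f|^q)^{1/q}$ from \cite[(2.21)]{eW2}, writing $(-\mathcal{L}_0)^{-1}=\int_0^\infty P_s^0\,\d s$, and then combining H\"older in $s$ with the $L^p\to L^q$ smoothing \eqref{PQ0} and Lemma~\ref{L3.5}. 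Your spectral/Minkowski route does not see this spatial structure; to push it through you would need bounds on $\|\nabla(\phi_m\phi_0^{-1})\|_{L^{2p}(\mu_0)}$, which are not provided and are delicate near $\partial M$ because of the singularity of $\phi_0^{-1}$.
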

\begin{proof}
It is enough to prove the case when $T=t$ instead of for all $T\geq t$ due to Lemma \ref{L3.3}. Let $p\in(1,\frac{2d+6}{3d+4-2\alpha}\wedge \ff {d+2}{d+1})$. Then, it is easy to see that
$$\ff{(d +2)(2p-2)}4+\ff{d(p-1)} 2+\frac{[(d+2-2\alpha)p-2]^+}{2}<1.$$
Due to this, there exists $\vv\in(0,1)$ such that
\begin{equation}\label{2L3.6}
\kappa:=\ff{(d+2)(2p-2+\vv)}4+\ff{d(p-1)} 2+\frac{[(d+2-2\alpha)p-2]^+}{2}<1.
\end{equation}

Recall \cite[(2.21)]{eW2}, i.e., for any $\varsigma>0$ and any $q>1$, there exists a constant $c>0$ such that
$$|\nn P_t^0 f|\le \ff{c\phi_0^{-\varsigma}}{\sqrt{1\wedge t}}(P_t^0|f|^q)^{\ff 1 q},\quad t>0,\,f\in\mathscr{B}_b(\mathring{M}).$$
Then, combining this with $(-\mathcal{L}_0)^{-1}=\int_0^\infty P_s^0\,\d s$, $\mu_0(\rho_{t,r}^B-1)=0$ and H\"{o}lder's inequality, there exists a constant $c_1>0$ such that
\begin{equation}\begin{split}\label{3L3.6}
&\mu_0\big(|\nabla (-\mathcal{L}_0)^{-1}(\rho_{t,r}^B-1)|^{2p}\big)\le\int_M\left(\int_0^\infty|\nabla P_s^0(\rho_{t,r}^B-1)|\d s\right)^{2p}\,\d \mu_0\\
&\le c_1\int_M\left(\int_0^\infty\ff{1}{\sqrt{1\wedge s}}\big[P_{\ff s 4}^0|P_{\ff{3s} 4}^0(\rho_{t,r}^B-1)|^p\big]^{\ff 1 p}\d s\right)^{2p}\phi_0^{-\vv}\,\d \mu_0\\
&\le c_1 \int_0^\infty \e^{2p\theta s}\mu_0\big(\phi_0^{-\vv} [P_{\ff s 4}^0|P_{\ff{3s} 4}^0(\rho_{t,r}^B-1)|^p ]^2\big)\,\d s\\
&\quad\times \left(\int_0^\infty(1\wedge s)^{-\ff p{2p-1}}\e^{-\ff{2p\theta s}{2p-1}}\,\d s\right)^{\ff{2p-1}{2p}},\quad t,r,\theta>0.
\end{split}\end{equation}

Since $p>1$, it is clear that
\begin{equation}\begin{split}\label{4L3.6}
\int_0^\infty(1\wedge s)^{-\ff{p}{2p-1}}\e^{-\ff{2p\theta s}{2p-1}}\,\d s<\infty,\quad \theta>0.
\end{split}\end{equation}
As for the estimation of $\mu_0\big(\phi_0^{-\vv} [P_{\ff s 4}^0|P_{\ff{3s} 4}^0(\rho_{t,r}^B-1)|^p ]^2\big)$,  using the fact that $\|\phi_0^{-\vv}\|_{L^{2/\vv}(\mu_0)}=1$, $\mu_0(\rho_{t,r}^B-1)=0$, and $P_t^0$ is contractive in $L^q(\mu_0)$ for all $q\ge 1$, by \eqref{PQ0} and H\"{o}lder's inequality, we find a constant $c_2>0$ such that
\begin{equation}\begin{split}\label{4L3.6+}
&\mu_0\big(\phi_0^{-\vv} [P_{\ff s 4}^0|P_{\ff{3s} 4}^0(\rho_{t,r}^B-1)|^p ]^2\big)\le\|P_{\ff s 4}^0|P_{\ff{3s} 4}^0(\rho_{t,r}^B-1)|^p\|_{L^{\ff 4{2-\vv}}(\mu_0)}^2\|\phi_0^{-\vv}\|_{L^{2/\vv}(\mu_0)}\\
&\leq \|P_{\ff{3s} 4}^0(\rho_{t,r}^B-1)\|_{L^{\ff {4p}{2-\vv}}(\mu_0)}^{2p}=\|(P_{\ff s 2}^0-\mu_0)[P_{\ff s 4}^0(\rho_{t,r}^B-1)]\|_{L^{\ff{4p}{2-\vv}}(\mu_0)}^{2p}\\
&\le \|P_{\ff s 2}^0-\mu_0\|_{L^2(\mu_0)\to L^{\ff{4p}{2-\vv}}(\mu_0)}^{2p}\|P_{\ff s 4}^0(\rho_{t,r}^B-1)\|_{L^2(\mu_0)}^{2p}\\
&\le c_2(1\wedge s)^{-\ff{(d+2)(2p-2+\vv)}4}\e^{-(\ll_1-\ll_0)ps}\|P_{\ff s 4}^0(\rho_{t,r}^B-1)\|_{L^2(\mu_0)}^{2p},\quad t,r,s>0.
\end{split}\end{equation}
By \eqref{EIG0}, \eqref{MTR}, H\"{o}lder's inequality and the fact that $(\phi_m\phi_0^{-1})_{m\in\mathbb{N}}$ is an orthonormal basis in $L^2(\mu_0)$, we find a constant $c_3>0$ so that
\begin{equation*}\begin{split}\label{4L3.6++}
&\|P_{\ff s 4}^0(\rho_{t,r}^B-1)\|_{L^2(\mu_0)}^{2p}=\left(\sum_{m=1}^\infty \e^{-(\ll_m-\ll_0)(2r+s/2)}|\psi_m^B(t)|^2\right)^p\\
&\le \left(\sum_{m=1}^\infty \e^{-(\ll_m-\ll_0)(2r+s/2)}\right)^{p-1}\sum_{m=1}^\infty \e^{-(\ll_m-\ll_0)(2r+s/2)}|\psi_m^B(t)|^{2p}\\
&\leq c_3(1\ww s)^{-\ff {d(p-1)} 2}\sum_{m=1}^\infty \e^{-(\ll_m-\ll_0)(2r+s/2)}|\psi_m^B(t)|^{2p},\quad t,r,s>0,
\end{split}\end{equation*}
where in the last inequality we used the following estimate, i.e., by \eqref{EIG},
$$\sum_{m=1}^\infty \e^{-(\ll_m-\ll_0)(2r+s/2)}\le \aa_1\int_1^\infty \e^{-\aa_2(2r+s/2)t^{2/d}}\d t\le\aa_3(1\ww s)^{-\ff d 2},\quad r,s>0,$$
for some constants $\aa_1,\aa_2,\aa_3>0$. Hence, by Lemma \ref{L3.5} and \eqref{EIG},
\begin{equation}\begin{split}\label{4L3.6++}
&\E^{\nu_0}[\|P_{\ff s 4}^0(\rho_{t,r}^B-1)\|_{L^2(\mu_0)}^{2p}|t<\sigma_\tau^B]\\
&\leq c_3(1\ww s)^{-\ff{d(p-1)}{2}}\sum_{m=1}^\infty \e^{-(\ll_m-\ll_0)(2r+s/2)}\E^{\nu_0}[|\psi_m^B(t)|^{2p}|t<\sigma_\tau^B]\\
&\leq c_4 t^{-p}(1\ww s)^{-\ff{d(p-1)}{2}}\sum_{m=1}^\infty \e^{-c_5sm^{2/d}}m^{\frac{(d+2-2\alpha)p-d-2}{d}}\\
&\leq c_6 t^{-p}(1\ww s)^{-\ff{d(p-1)}{2}}\int_1^\infty \e^{-c_5 su^{\ff 2 d}}u^{\frac{(d+2-2\alpha)p-d-2}{d}}\,\d u\\
&\leq c_7 t^{-p}(1\ww s)^{-\ff{d(p-1)}{2}-\frac{(d+2-2\alpha)p-2}{2}}\int_s^\infty v^{\frac{(d+2-2\alpha)p-2}{2}-1}\e^{-v} \,\d v\\
&\leq c_8 t^{-p}(1\ww s)^{-\ff{d(p-1)}{2}-\frac{[(d+2-2\alpha)p-2]^+}{2}}\log(2+s^{-1}),\quad t\geq1,\,r,s>0,
\end{split}\end{equation}
for some constants $c_i>0$, $i=4,5,6,7,8$, where the term $\log(2+s^{-1})$ comes from the situation when $(d+2-2\alpha)p=2$.

Putting  \eqref{4L3.6++}, \eqref{4L3.6+}, \eqref{4L3.6} and \eqref{3L3.6} together, we can find a function $a:(0,\infty)\to(0,\infty)$ such that
\begin{equation*}\begin{split}\label{5L3.6}
&\E^{\nu_0}[\mu_0(|\nn (-\mathcal{L}_0)^{-1}(\rho_{t,r}^B-1)|^{2p})|t<\si_\tau^B]\\
&\le a(\tt)t^{-p}\int_0^\infty (1\ww s)^{-\kappa}\log(2+s^{-1}) \e^{2p\tt s-(\ll_1-\ll_0)ps}\,\d s,\quad t\geq1,\,\theta>0,
\end{split}\end{equation*}
where $\kappa$ is defined in \eqref{2L3.6} above. Choosing $\theta\in(0,\ff{\ll_1-\ll_0} 2)$, since $\kappa\in(0,1)$, we finally prove the desired result.
\end{proof}

\begin{lem}\label{L3.4}
Assume that $B\in \BB^\aa$ for some $\aa\in(\frac{1}{2},1]$ and $d<6\alpha-2$. Then for any $\varepsilon\in(\frac{d}{4}\vee\frac{d^2+2(1-\aa)d}{2(d+2)},1)$, there exists a constant $c>0$ such that
$$\sup_{T\ge t}\E^{\nu_0}[|\rho_{t,r}^B(y)-1|^2|T<\si_\tau^B]\leq c\phi_0^{-2}(y) t^{-1}r^{-\vv},\quad t\geq 1,\,r\in(0,1],\,y\in \mathring{M}.$$
\end{lem}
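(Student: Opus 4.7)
The plan is to compute $\E^{\nu_0}[|\rho_{t,r}^B(y)-1|^2|T<\si_\tau^B]$ by isolating a main term and an error term (in the spirit of the proof of Lemma \ref{L3.2}) and bounding each by Cauchy--Schwarz and H\"older inequalities tuned so that the resulting exponents on $r$ match the two quantities in the threshold $\ff d 4\vee\ff{d(d+2-2\aa)}{2(d+2)}$. By Lemma \ref{L3.3} it suffices to treat $T=t$. Starting from \eqref{MTR}, write $\rho_{t,r}^B(y)-1=t^{-1}\int_0^t g(X_s^B)\,\d s$ with $g:=p_r^0(\cdot,y)-1\in L^2(\mu_0)$ and $\mu_0(g)=0$. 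The squared integral then becomes a symmetric double integral in $(s_1,s_2)$, whose conditional expectation is computed via \eqref{3L3.2} with $f$ replaced by $g$ at both time points. Using $\nu_0(\phi_0 P_u^0 h)=\mu(\phi_0)^{-1}\mu_0(h)$ and the decomposition $P_w^0\phi_0^{-1}=\mu(\phi_0)+(P_w^0-\mu_0)\phi_0^{-1}$ produces a main contribution $\mu(\phi_0)\mu_0(gP_v^0g)$ and an error contribution $J(v,w):=\mu_0(gP_v^0(g(P_w^0-\mu_0)\phi_0^{-1}))$.

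For the main term, the spectral identity $\mu_0(gP_v^0g)=\sum_{m\ge1}\e^{-2(\ll_m-\ll_0)r}\e^{-(\ll_m-\ll_0)v}(\phi_m\phi_0^{-1})(y)^2$ combined with the subordinator and $(s_1,s_2)$-integrals of Lemma \ref{L3.2} and the normalization $\Pp^{\nu_0}(t<\si_\tau^B)\sim\e^{-B(\ll_0)t}$ from \eqref{SDHT} yields a bound of the form $\ff{C}{t}\sum_{m\ge1}\ff{\e^{-2(\ll_m-\ll_0)r}(\phi_m\phi_0^{-1})(y)^2}{B(\ll_m)-B(\ll_0)}$. Cauchy--Schwarz in $m$ splits this as
\[
\ff{C}{t}\ss{p_r^0(y,y)}\cdot\ss{\sum_{m\ge1}\ff{(\phi_m\phi_0^{-1})(y)^2}{(B(\ll_m)-B(\ll_0))^2}}.
\]
The Doob identity $p_r^0(y,y)=\e^{\ll_0 r}p_r^D(y,y)/\phi_0(y)^2$ together with the Gaussian upper bound $p_r^D(y,y)\le Cr^{-d/2}$ gives $\ss{p_r^0(y,y)}\le C\phi_0^{-1}(y)r^{-d/4}$. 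Using $B(\ll_m)-B(\ll_0)\ge c\ll_m^\aa$ from \eqref{BR} together with $\ll_m^{-2\aa}=\Gamma(2\aa)^{-1}\int_0^\infty s^{2\aa-1}\e^{-\ll_m s}\,\d s$, the second factor equals $\phi_0^{-1}(y)\ss{\Gamma(2\aa)^{-1}\int_0^\infty s^{2\aa-1}p_s^D(y,y)\,\d s}$; since the hypothesis $d<6\aa-2$ forces $2\aa>d/2$, this integral is uniformly bounded in $y\in\mathring M$. Hence the main term is $\le C\phi_0^{-2}(y)t^{-1}r^{-d/4}$.

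For the error $J(v,w)$, the key step is H\"older's inequality with an exponent $p\in(p_{\min},2)$, where $p_{\min}:=\ff{2(d+2)}{d+2+4\aa}$. Self-adjointness of $P_v^0$ together with $\mu_0(g)=0$ and \eqref{PQ0} give $\|P_v^0 g\|_{L^{p/(p-1)}(\mu_0)}\le C(1\ww v)^{-(d+2)(2-p)/(4p)}\e^{-(\ll_1-\ll_0)v}\|g\|_{L^2(\mu_0)}$; the $L^2$-bound from above yields $\|g\|_{L^2(\mu_0)}\le C\phi_0^{-1}(y)r^{-d/4}$; a Doob-transform computation with the Gaussian upper bound on $p_r^D$ and the local behaviour of $\phi_0$ near $y$ gives $\|g\|_{L^p(\mu_0)}\le C\phi_0^{2/p-2}(y)r^{-(p-1)d/(2p)}$; and \eqref{PQ0}--\eqref{PHI} give $\|(P_w^0-\mu_0)\phi_0^{-1}\|_\infty\le C(1\ww w)^{-(d+2)/(2q)}\e^{-(\ll_1-\ll_0)w}$ for any $q\in[1,3)$. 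Using $\phi_0^{2/p-3}(y)\le C\phi_0^{-2}(y)$ (valid for $p\le 2$) we arrive at
\[
|J(v,w)|\le C\phi_0^{-2}(y)r^{-d(3p-2)/(4p)}(1\ww v)^{-(d+2)(2-p)/(4p)}\e^{-(\ll_1-\ll_0)v}(1\ww w)^{-(d+2)/(2q)}\e^{-(\ll_1-\ll_0)w}.
\]
Integrating against $\e^{-\ll_0(u+v+w)}$ and the subordinator laws as in Lemma \ref{L3.2}, integrability of the subordinator moments demands $(d+2)(2-p)/(4p\aa)<1$ and $(d+2)/(2q\aa)<1$, i.e.\ $p>p_{\min}$ and $q>(d+2)/(2\aa)$ (the latter is feasible because $d<6\aa-2$ forces $(d+2)/(2\aa)<3$). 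After the $(s_1,s_2)$-integrals and division by $t^2\Pp^{\nu_0}(t<\si_\tau^B)$, the error contribution is bounded by $C\phi_0^{-2}(y)r^{-d(3p-2)/(4p)}/t$. Letting $p\downarrow p_{\min}$, the exponent converges to $d(d+2-2\aa)/(2(d+2))$, so for every $\vv>d(d+2-2\aa)/(2(d+2))$ the error is $\le C_\vv\phi_0^{-2}(y)t^{-1}r^{-\vv}$.

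Combining the two bounds yields the claim for every $\vv$ in the stated range. The hard part will be justifying the Doob-type estimate $\|g\|_{L^p(\mu_0)}\le C\phi_0^{2/p-2}(y)r^{-(p-1)d/(2p)}$ for $p$ near $p_{\min}$, which requires tracking the concentration of $p_r^D(\cdot,y)^p$ at $x=y$ (where $\phi_0^{2-p}(x)\approx\phi_0^{2-p}(y)$) carefully enough to extract the exact prefactor $\phi_0^{2/p-2}(y)$; it is precisely this step that, through the choice $p\downarrow p_{\min}$ (with $q\downarrow(d+2)/(2\aa)$), collapses the error exponent to $d(d+2-2\aa)/(2(d+2))$ without logarithmic loss and so produces the threshold stated in the lemma.
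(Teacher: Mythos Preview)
Your decomposition and overall strategy match the paper's: reduce to $T=t$ via Lemma~\ref{L3.3}, write $\rho_{t,r}^B(y)-1$ as a time average of $g:=p_r^0(\cdot,y)-1$, expand via \eqref{3L3.2}, split $P_w^0\phi_0^{-1}=\mu(\phi_0)+(P_w^0-\mu_0)\phi_0^{-1}$, and bound the error piece by H\"older with an exponent $p$ pushed down to $p_{\min}=\ff{2(d+2)}{d+2+4\aa}$ so that the subordinator moments in $v$ and $w$ stay integrable. Your main-term treatment (spectral expansion plus Cauchy--Schwarz in $m$, then the Doob identity for $p_r^0(y,y)$ and a Gamma-integral for $\sum_m\ll_m^{-2\aa}(\phi_m\phi_0^{-1})(y)^2$) is a clean variant of the paper's direct H\"older bound and lands on the same $r^{-d/4}$.

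The one misstep is what you flag as the ``hard part''. You do \emph{not} need the refined estimate $\|g\|_{L^p(\mu_0)}\le C\phi_0^{2/p-2}(y)r^{-d(p-1)/(2p)}$, and proving it uniformly in $y$ near $\pp M$ would indeed be delicate. The paper simply bounds $\phi_0^{2-p}(x)\le\|\phi_0\|_\infty^{2-p}$ inside the Doob computation to get
\[
\|g\|_{L^p(\mu_0)}\le 1+\e^{\ll_0 r}\phi_0^{-1}(y)\|\phi_0\|_\infty^{(2-p)/p}\|p_r^D(\cdot,y)\|_{L^p(\mu)}\le C\phi_0^{-1}(y)\,r^{-d(p-1)/(2p)},\quad p\in[1,2].
\]
Multiplying by $\|g\|_{L^2(\mu_0)}\le C\phi_0^{-1}(y)r^{-d/4}$ already gives the prefactor $\phi_0^{-2}(y)$ exactly, with the same $r$-exponent $\ff{d(p-1)}{2p}+\ff d4=\ff{d(3p-2)}{4p}$ you computed. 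The collapse of this exponent to $\ff{d(d+2-2\aa)}{2(d+2)}$ as $p\downarrow p_{\min}$ is purely a statement about the $r$-power and has nothing to do with the $\phi_0$ prefactor, so no local analysis of $\phi_0$ near $y$ is required.
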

\begin{proof} Let $t,r>0$. Recall that $\nu_0=\frac{\phi_0}{\mu(\phi_0)}\mu$. By Lemma \ref{L3.3}, it is clear that we only need to establish the upper bound for $\E^{\nu_0}[|\rho_{t,r}^B(y)-1|^2|t<\si_\tau^B]$.  For an arbitrary fixed point $y\in \mathring{M}$, let
$$f(\cdot):=p_r^0(\cdot,y)-1.$$
Observing that \eqref{MTR} leads to
$$\rho_{t,r}^B(y)-1=\frac{1}{t}\int_0^t f(X_s^B)\,\d s,$$
we have
\begin{equation}\label{1L3.4}
\E^{\nu_0}[|\rho_{t,r}^B(y)-1|^21_{\{t<\si_\tau^B\}}]=\frac{2}{t^2}\int_0^t \d s_1\int_{s_1}^t\E^{\nu_0}[1_{\{t<\si_\tau^B\}}f(X_{s_1}^B)f(X_{s_2}^B)]\,\d s_2.
\end{equation}
By \eqref{3L3.2} above, we obtain
\begin{equation}\begin{split}\label{2L3.4}
&\E^{\nu_0}[1_{\{t<\si_\tau^B\}}f(X_{s_1}^B)f(X_{s_2}^B)]\\
&=\ff 1 {\mu(\phi_0)}\int_0^\infty\!\!\!\int_0^\infty\!\!\!\int_0^\infty \e^{-\lambda_0 (u+v+w)}\mu_0\big(P_u^0[fP_v^0(fP_w^0\phi_0^{-1})]\big)\,\Pp_{S_{t-s_2}^B}(\d w)\Pp_{S_{s_2-s_1}^B}(\d v)\Pp_{S_{s_1}^B}(\d u),
\end{split}\end{equation}
for every $s_1<s_2<t$.

Noting that $\mu_0(f)=0$, by the invariance and the symmetry of $(P_t^0)_{t\geq0}$ w.r.t. $\mu_0$, we have
\begin{equation*}\begin{split}\label{3L3.4}
&\mu_0\big(P_u^0[fP_v^0(fP_w^0\phi_0^{-1})]\big)=\mu_0\big(fP_v^0(fP_w^0\phi_0^{-1})\big)=\mu_0\big([fP_w^0\phi_0^{-1}]P_v^0 f\big)\\
&=\mu_0\big([fP_w^0\phi_0^{-1}](P_v^0-\mu_0)f\big)=\mu_0\big(f[(P_w^0-\mu_0)\phi_0^{-1}+\mu(\phi_0)](P_v^0-\mu_0)f\big)\\
&=\mu_0\big(f[(P_w^0-\mu_0)\phi_0^{-1}](P_v^0-\mu_0)f\big)+\mu_0\big(f[P_v^0-\mu_0]f\big)\mu(\phi_0)\\
&=:{\rm I}_1+{\rm I}_2,\quad u,v,w>0.
\end{split}\end{equation*}
For any $\alpha\in(\frac{1}{2},1]$ and $d<6\alpha-2$, take $q\in(\frac{d+2}{2\alpha},3)$ so that $\vv_1:=\frac{d+2}{2\alpha q}<1$. Then $\|\phi_0^{-1}\|_{L^q(\mu_0)}<\infty$ by  \eqref{PHI}. For any $p\in(1,2]$, we deduce from \eqref{PQ0} and H\"{o}lder's inequality that, there exist constants $c_1,c_2>0$ such that
\begin{equation*}\begin{split}\label{4L3.4}
{\rm I}_1&\leq\|f\|_{L^p(\mu_0)}\|(P_w^0-\mu_0)\phi_0^{-1}\|_\infty\|(P_v^0-\mu_0)f\|_{L^{\frac{p}{p-1}}(\mu_0)}\\
&\leq \|f\|_{L^p(\mu_0)} \|P_w^0-\mu_0\|_{L^q(\mu_0)\rightarrow L^\infty(\mu_0)}\|\phi_0^{-1}\|_{L^q(\mu_0)}\|P_v^0-\mu_0\|_{L^2(\mu_0)\to L^{\frac{p}{p-1}}(\mu_0)}\|f\|_{L^2(\mu_0)}\\
&\leq c_1 \|f\|_{L^p(\mu_0)}\|f\|_{L^2(\mu_0)}\e^{-(\lambda_1-\lambda_0)v}\{1\wedge v\}^{-\frac{(d+2)(2-p)}{4p}}
\e^{-(\lambda_1-\lambda_0)w}\{1\wedge w\}^{-\frac{d+2}{2q}},\quad v,w>0,
\end{split}\end{equation*}
and similarly,
\begin{equation*}\begin{split}\label{4L3.4}
{\rm I}_2&\leq\mu(\phi_0)\|f\|_{L^p(\mu_0)}\|P_v^0-\mu_0\|_{L^2(\mu_0)\rightarrow L^{\frac{p}{p-1}}(\mu_0)}\|f\|_{L^2(\mu_0)}\\
&\leq c_2\|f\|_{L^p(\mu_0)}\|f\|_{L^2(\mu_0)}\e^{-(\lambda_1-\lambda_0)v}\{1\wedge v\}^{-\frac{(d+2)(2-p)}{4p}},\quad v>0.
\end{split}\end{equation*}
It is well known that $\inf_M\phi_0^{-1}>0$. By \eqref{DPQ} and \eqref{R}, there exist constants $a_1,a_2>0$ such that
\begin{equation*}\begin{split}\label{5L3.4}
\|f\|_{L^p(\mu_0)}&\le 1+\|p_r^0(\cdot,y)\|_{L^p(\mu_0)}= 1+\e^{r\lambda_0}\phi_0^{-1}(y)\|\phi_0^{-1}p_r^D(\cdot,y)\|_{L^p(\mu_0)}\\
&\leq 1+a_1\phi_0^{-1}(y)\|\phi_0\|_\infty^\frac{2-p}{p}\|p_r^D(\cdot,y)\|_{L^p(\mu)}\\
&\leq a_2\phi_0^{-1}(y)r^{-\frac{d(p-1)}{2p}},\quad r\in(0,1],\,p\in[1,2].
\end{split}\end{equation*}
Hence, we derive that there exists a constant $c_3>0$ such that, for every $p\in(1,2]$,
\begin{equation*}\begin{split}\label{5L3.4+}
\mu_0\big(P_u^0[fP_v^0(fP_w^0\phi_0^{-1})]\big)&\leq c_3\phi_0^{-2}(y)r^{-\frac{d(p-1)}{2p}-\frac{d}{4}}
\{1\wedge v\}^{-\frac{(d+2)(2-p)}{4p}}\e^{-(\lambda_1-\lambda_0)v}\\
&\quad\times\Big[1+\{1\wedge w\}^{-\frac{d+2}{2q}}\e^{-(\lambda_1-\lambda_0)w}\Big],\quad u,v,w>0,\,r\in(0,1].
\end{split}\end{equation*}

Thus, by an argument analogous to \eqref{10L3.2}, there exist constants $c_4,c_5>0$ such that, for every $p\in(1,2]$,
\begin{equation}\begin{split}\label{6L3.4}
&\int_0^\infty\!\!\!\int_0^\infty\!\!\!\int_0^\infty \e^{-\lambda_0 (u+v+w)} \mu_0\big(P_u^0[fP_v^0(fP_w^0\phi_0^{-1})]\big)\,\Pp_{S_{t-s_2}^B}(\d w)\Pp_{S_{s_2-s_1}^B}(\d v)\Pp_{S_{s_1}^B}(\d u)\\
&\leq c_4\phi_0^{-2}(y)r^{-\frac{d(p-1)}{2p}-\frac{d}{4}}\e^{-B(\lambda_0) s_1}\e^{-B(\lambda_1)(s_2-s_1)}\Big[1+(s_2-s_1)^{-\frac{(d+2)(2-p)}{4\alpha p}}\Big]\\
&\quad\times\left\{\e^{-B(\lambda_0)(t-s_2)}+\e^{-B(\lambda_1)(t-s_2)}\Big[1+(t-s_2)^{-\varepsilon_1}\Big]\right\}\\
&\leq c_5\phi_0^{-2}(y)r^{-\frac{d(p-1)}{2p}-\frac{d}{4}}\e^{-B(\lambda_0) s_1}\e^{-B(\lambda_1)(s_2-s_1)}\e^{-B(\lambda_0)(t-s_2)}\\
&\quad\times\Big[1+(t-s_2)^{-\varepsilon_1}\Big]\Big[1+(s_2-s_1)^{-\frac{(d+2)(2-p)}{4\alpha p}}\Big],\quad r\in(0,1],\, s_1<s_2<t.
\end{split}\end{equation}
Letting $$p_0:=1\vee\frac{2(d+2)}{d+2+4\alpha},$$ and taking $p>p_0$ such that
$$\varepsilon_2:=\frac{(d+2)(2-p)}{4\alpha p}<1,$$
we can deduce from \eqref{6L3.4} that
\begin{equation}\begin{split}\label{7L3.4}
&\int_0^\infty\!\!\!\int_0^\infty\!\!\!\int_0^\infty \e^{-\lambda_0 (u+v+w)} \mu_0\big(P_u^0[fP_v^0(fP_w^0\phi_0^{-1})]\big)\,\Pp_{S_{t-s_2}^B}(\d w)\Pp_{S_{s_2-s_1}^B}(\d v)\Pp_{S_{s_1}^B}(\d u)\\
&\leq c_6\phi_0^{-2}(y)r^{-\frac{d(p-1)}{2p}-\frac{d}{4}}
\e^{-B(\lambda_0) s_1}\e^{-B(\lambda_1)(s_2-s_1)} \e^{-B(\lambda_0)(t-s_2)}\\
&\quad\times\big[1+(t-s_2)^{-\varepsilon_1}\big]\big[1+(s_2-s_1)^{-\varepsilon_2}\big],\quad r\in(0,1],\,s_1<s_2<t,
\end{split}\end{equation}
for some constant $c_6>0$.

Combing \eqref{7L3.4} with \eqref{2L3.4} and \eqref{1L3.4}, we arrive at
$$\E^{\nu_0}[|\rho_{t,r}^B(y)-1|^2|t<\sigma_\tau^B]\leq c_7\phi_0^{-2}(y)t^{-1}r^{-\frac{d(p-1)}{2p}-\frac{d}{4}},\quad r\in(0,1],\, t\geq 1,$$
for some constant $c_7>0$. Note that $p\mapsto \frac{d(p-1)}{2p}$ is increasing, and for every $\alpha\in(\frac{1}{2},1]$ and every $d<6\alpha-2$,
$$\lim_{p\downarrow p_0}\left\{\frac{d(p-1)}{2p}+\frac{d}{4}\right\}=\frac{d}{4}\vee\frac{d^2+2(1-\alpha)d}{2(d+2)}<1.$$
Hence, for any $\varepsilon\in(\frac{d}{4}\vee\frac{d^2+2(1-\alpha)d}{2(d+2)},1)$, there exists some $p>p_0$ such that $\frac{d(p-1)}{2p}+\frac{d}{4}\leq \varepsilon$.
Therefore, we complete the proof.
\end{proof}

For every $t>0$ and $r\in(0,1]$, let
$$\rho_{t,r,r}^B:=(1-r)\rho_{t,r}^B+r,\quad \mu_{t,r,r}^B:=\rho_{t,r,r}^B\mu_0.$$
\begin{lem}\label{L3.7}
Let $B\in \BB^\aa$ for some $\aa\in(\ff 1 2,1]$ and $d<6\aa-2$. If $r_t=t^{-\zeta}$ for some $\zeta\in(1,\ff 4 d \wedge\ff{2(d+2)}{d^2+2(1-\aa)d})$, then
$$\lim_{t\to\infty}\sup_{T\ge t}\E^{\nu_0}\big[\mu_0\big(|\mathscr{M}(\rho_{t,r_t,r_t}^B,1)^{-1}-1|^q\big)|T<\si_\tau^B\big]=0,\quad t\geq1,\, q\ge 1.$$
\end{lem}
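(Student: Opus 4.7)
Set $g(x) := \mathscr{M}(x,1)^{-1} - 1$, i.e., $g(x) = \log x/(x-1) - 1$ for $x > 0$, $x \ne 1$, with $g(1) = 0$. Then $g$ is smooth on $(0,\infty)$ and strictly decreasing from $g(0^+) = +\infty$, through $g(1) = 0$, down to $g(+\infty) = -1$. Since $\rho_{t,r,r}^B - 1 = (1-r)(\rho_{t,r}^B - 1)$, we have the pointwise bound $|\rho_{t,r_t,r_t}^B - 1| \le |\rho_{t,r_t}^B - 1|$ together with $\rho_{t,r_t,r_t}^B \ge r_t$. Two elementary estimates then follow: (a) a Taylor expansion at $x = 1$ yields a constant $C_0$ with $|g(x)| \le C_0|x-1|$ for $x \in [1/2, 3/2]$; (b) monotonicity of $g$ gives $|g(x)| \le g(r_t) \le C_1 \log t$ for every $x \ge r_t$ and every sufficiently large $t$, where $C_1$ depends only on $\zeta$.

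The range of $\zeta$ is tailored so that Lemma \ref{L3.4} bites: since $\zeta < \frac{4}{d} \wedge \frac{2(d+2)}{d^2+2(1-\alpha)d}$, one may pick $\varepsilon \in \bigl(\frac{d}{4} \vee \frac{d^2+2(1-\alpha)d}{2(d+2)},\,1\bigr)$ with $\eta := 1 - \zeta\varepsilon > 0$, so that
$$\sup_{T \ge t} \E^{\nu_0}\bigl[|\rho_{t,r_t}^B(y) - 1|^2 \,\big|\, T < \sigma_\tau^B\bigr] \le c\,\phi_0^{-2}(y)\, t^{-\eta},\qquad y \in \mathring{M},\ t \ge 1.$$
Fix such $y$, set $\rho_y := \rho_{t,r_t,r_t}^B(y)$, and split the conditional expectation along $A_y := \{|\rho_y - 1| \le 1/2\}$. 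On $A_y$, bound (a) gives $|g(\rho_y)|^q \le C_0^q|\rho_y - 1|^q$; for $q \ge 2$ we further dominate $|\rho_y - 1|^q$ by $2^{-(q-2)}|\rho_y - 1|^2$, while for $q \in [1, 2)$ we apply Jensen's inequality conditionally to obtain $\E^{\nu_0}[|\rho_y - 1|^q \mid T < \sigma_\tau^B] \le (\E^{\nu_0}[|\rho_y - 1|^2 \mid T < \sigma_\tau^B])^{q/2}$. On $A_y^c$, bound (b) gives $|g(\rho_y)|^q \le (C_1 \log t)^q$, and Chebyshev combined with the second-moment estimate above yields $\P^{\nu_0}(A_y^c \mid T < \sigma_\tau^B) \le 4c\,\phi_0^{-2}(y)\,t^{-\eta}$.

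Integrating against $\mu_0 = \phi_0^2\mu$ and using $\int_M \phi_0^{-2}\,d\mu_0 = \mu(M)$ together with $\int_M \phi_0^{-q}\,d\mu_0 = \int_M \phi_0^{2-q}\,d\mu \le \|\phi_0\|_\infty^{2-q}\,\mu(M)$ for $q \in [1,2]$, each of the three contributions is bounded above by $(\log t)^q$ times a strictly negative power of $t$; pulling the supremum over $T \ge t$ inside the $y$-integral and letting $t \to \infty$ closes the proof. The main delicacy is that bound (b) blows up logarithmically in $t$, so the splitting threshold $|\rho_y - 1| \le 1/2$ must be kept bounded away from zero rather than shrunk with $t$, and the second-moment decay $t^{-\eta}$ provided by Lemma \ref{L3.4} must beat the logarithmic factor; the admissible range for $\zeta$ is exactly what allows choosing $\varepsilon$ to balance these two requirements.
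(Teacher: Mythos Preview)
Your argument is correct and follows the same core strategy as the paper's proof: split according to whether $\rho$ is close to $1$, use Lemma~\ref{L3.4} together with Chebyshev's inequality on the ``far'' event, and control the ``near'' event via smoothness of $\mathscr{M}(\cdot,1)^{-1}$ at $1$. The execution differs in two respects worth noting.

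First, the paper invokes a ready-made inequality (from \cite[Lemma 3.2(2)]{eWW}, cf.\ \cite[(3.16)]{eWZ}) that already packages the splitting with a free parameter $\eta\in(0,1)$; this yields a bound of the form $|(1-\eta)^{-1/2}-2/(2+\eta)|^q + \Pp^{\nu_0}(|\rho^B_{t,r_t}-1|>\eta\mid t<\sigma^B_\tau)$, after which one lets $t\to\infty$ and then $\eta\to 0$. Your version is more self-contained: you fix the threshold at $1/2$, exploit the Lipschitz bound $|g(x)|\le C_0|x-1|$ on the near set to extract an explicit power decay $t^{-\eta q/2}$ there, and absorb the crude logarithmic growth $|g|\le C_1\log t$ on the far set into the polynomial decay coming from Chebyshev. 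This trades the paper's double limit for a single $t\to\infty$, at the cost of tracking the $\log t$ factor explicitly.

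Second, you handle the supremum over $T\ge t$ directly via the uniform-in-$T$ bound already built into Lemma~\ref{L3.4}, whereas the paper first reduces to $T=t$ via Lemma~\ref{L3.3}. Both are valid; yours avoids one citation. The remaining steps (Fubini to swap $\mu_0$-integration and conditional expectation, and the observation that $\mu_0(\phi_0^{-2})=1$ and $\mu_0(\phi_0^{-q})<\infty$ for $q\le 2$) match the paper's.
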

\begin{proof}
By Lemma \ref{L3.3}, instead of $T\ge t$, it suffices to prove the case when $T=t$. By the proof of \cite[Lemma 3.2(2)]{eWW} (see also \cite[(3.16)]{eWZ}), we have, for every $\eta\in(0,1)$ and every $x\in M$,
\begin{equation*}\begin{split}
&\E^{\nu_0}\big[|\mathscr{M}\big(\rho_{t,r_t,r_t}^B(x),1\big)^{-1}-1|^q|t<\si_\tau^B\big]\\
&\le\left|\ff 1 {\sqrt{1-\eta}}-\ff 2{2+\eta}\right|^q
+\Pp^{\nu_0}\big(|\rho_{t,r_t}^B(x)-1|>\eta|t<\si_\tau^B\big),\quad t\geq1.
\end{split}\end{equation*}
According to Lemma \ref{L3.4}, there exist constants $c>0$ and $\vv\in(0,\zeta^{-1})$ such that
$$\Pp^{\nu_0}\big(|\rho_{t,r_t}^B(x)-1|>\eta|t<\si_\tau^B\big)\le c\eta^{-2}\phi_0(x)^{-2}t^{-1+\zeta\vv},\quad x\in M,\,t\geq1.$$
Noting also that $\mu_0(\phi_0^{-2})=1$, we obtain
\begin{equation*}\begin{split}
&\E^{\nu_0}\big[\mu_0\big(|\mathscr{M}\big(\rho_{t,r_t,r_t}^B,1\big)^{-1}-1|^q\big)|t<\si_\tau^B\big]\\
&\le\left|\ff 1 {\sqrt{1-\eta}}-\ff 2{2+\eta}\right|^q+c\eta^{-2}t^{-1+\zeta\vv},\quad \eta\in(0,1),\,t\ge 1.
\end{split}\end{equation*}
Since $\zeta\vv\in(0,1)$, by letting first $t\to\infty$ and then $\eta\to 0^+$, we finish the proof.
\end{proof}

Let ${\rm diam}(M)$ denote the diameter of $M$.
\begin{lem}\label{L3.8}
Let $B\in\mathbf{B}$. Assume that $\nu\in\scr{P}_0$ and $\nu=h\mu$ with $\|h\phi_0^{-1}\|_\infty<\infty$. Then there exists a constant $c>0$ such that
\begin{equation}\begin{split}\label{6L3.8+}
\sup_{T\geq t}\E^\nu[\W_2(\mu_{t,r}^B,\mu_t^B)^2|T<\si_\tau^B]\leq c r,\quad t\geq1,\,r\in(0,1],
\end{split}\end{equation}
and
\begin{equation}\begin{split}\label{L3.8+1}
\sup_{T\ge t}\E^{\nu}[\W_2(\mu_{t,r,r}^B,\mu_t^B)^2|T<\si_\tau^B]\le cr,\quad t\geq1,\,r\in(0,1].
\end{split}\end{equation}
\end{lem}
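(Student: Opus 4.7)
The plan is to couple $\mu_t^B$ with $\mu_{t,r}^B$ through the smoothing semigroup $P_r^0$, control the resulting transport cost by a heat-kernel second moment, and then absorb the boundary singularity $\phi_0^{-1}$ by exploiting the Markov decomposition \eqref{SDSG0} together with the hypothesis $\|h\phi_0^{-1}\|_\infty<\infty$.

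First, since $\mu_{t,r}^B(\d y)=\bigl(\frac{1}{t}\int_0^t p_r^0(X_s^B,y)\,\d s\bigr)\mu_0(\d y)$, applying joint convexity of $\W_2^2$ to the time-$s$ superposition and coupling $\delta_{X_s^B}$ with $p_r^0(X_s^B,\cdot)\mu_0$ yields
\[
\W_2^2(\mu_t^B,\mu_{t,r}^B)\le \frac{1}{t}\int_0^t \Psi(X_s^B,r)\,\d s,\qquad \Psi(x,r):=\int_M \rho(x,y)^2\,p_r^0(x,y)\,\mu_0(\d y).
\]
From \eqref{R} one has $p_r^0(x,y)=\e^{\lambda_0 r}\phi_0^{-1}(x)\phi_0^{-1}(y)p_r^D(x,y)$, so $\Psi(x,r)=\e^{\lambda_0 r}\phi_0^{-1}(x)\,\E^x\bigl[\rho(x,X_r)^2\phi_0(X_r)1_{\{r<\tau\}}\bigr]$. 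Combined with the boundedness of $\phi_0$ and the standard short-time second-moment estimate $\int_M\rho(x,y)^2 p_r^D(x,y)\,\mu(\d y)\le c_0 r$ for $r\in(0,1]$ on the compact manifold $M$, this gives the pointwise bound $\Psi(x,r)\le c_1\,\phi_0^{-1}(x)\,r$.

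Next I would estimate $\E^\nu[\phi_0^{-1}(X_s^B)1_{\{T<\sigma_\tau^B\}}]$. Applying the Markov property at time $s$ and \eqref{SDSG0} twice (as in the derivation of \eqref{2L3.2}) writes this quantity as
\[
\int_0^\infty\!\!\!\int_0^\infty \e^{-\lambda_0(u+v)}\,\nu\bigl(\phi_0 P_v^0[\phi_0^{-1}P_u^0\phi_0^{-1}]\bigr)\,\Pp_{S_{T-s}^B}(\d u)\,\Pp_{S_s^B}(\d v).
\]
Writing $\nu(\phi_0 g)=\mu_0(h\phi_0^{-1}g)\le \|h\phi_0^{-1}\|_\infty\mu_0(g)$, then using $\mu_0$-invariance and $L^2(\mu_0)$-symmetry of $P_t^0$ combined with Jensen's inequality and $\mu_0(\phi_0^{-2})=\mu(1)=1$, I obtain
\[
\nu\bigl(\phi_0 P_v^0[\phi_0^{-1}P_u^0\phi_0^{-1}]\bigr)\le \|h\phi_0^{-1}\|_\infty\,\mu_0\bigl((P_{u/2}^0\phi_0^{-1})^2\bigr)\le \|h\phi_0^{-1}\|_\infty.
\]
From \eqref{LT} the double integral collapses to $\e^{-B(\lambda_0)T}$, and matching with the lower bound \eqref{SDHT-L} yields $\sup_{T\ge t}\,\E^\nu[\phi_0^{-1}(X_s^B)\mid T<\sigma_\tau^B]\le \|h\phi_0^{-1}\|_\infty\|\phi_0\|_\infty/\nu(\phi_0)$. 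Integrating this bound in $s\in[0,t]$ against the pointwise estimate on $\Psi$ produces \eqref{6L3.8+}.

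Finally, \eqref{L3.8+1} reduces to \eqref{6L3.8+} by joint convexity of $\W_2^2$: since $\mu_{t,r,r}^B=(1-r)\mu_{t,r}^B+r\mu_0$,
\[
\W_2^2(\mu_{t,r,r}^B,\mu_t^B)\le (1-r)\W_2^2(\mu_{t,r}^B,\mu_t^B)+r\W_2^2(\mu_0,\mu_t^B)\le (1-r)\W_2^2(\mu_{t,r}^B,\mu_t^B)+r\,\mathrm{diam}(M)^2,
\]
so conditional expectations combined with \eqref{6L3.8+} give the claim. The main obstacle is the non-integrability of $\phi_0^{-1}$ in $L^\infty(M)$, which appears as soon as one uses \eqref{R} to convert $p_r^0$ into $p_r^D$; the fix is to push the singular $\phi_0^{-1}(X_s^B)$ through the subordinated-semigroup decomposition \eqref{SDSG0}, where it is absorbed by the hypothesis $\|h\phi_0^{-1}\|_\infty<\infty$ and the identity $\mu_0(\phi_0^{-2})=1$.
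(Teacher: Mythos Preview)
Your argument is correct and complete, but it takes a genuinely different route from the paper's proof in two places.

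For the pointwise transport cost, the paper does not convert $p_r^0$ back to $p_r^D$ via the Doob relation. Instead it invokes the estimate $\E^x[\rho(x,X_r^0)^2]\le c\,r\,\log\bigl(1+\phi_0^{-1}(x)\bigr)$ from \cite[Lemma~3.8]{eW2}, which controls the second moment of the $\mathcal L_0$-diffusion directly and gives a logarithmic, rather than linear, blow-up in $\phi_0^{-1}$. Consequently the paper has to bound $\E^\nu\bigl[\log\{1+\phi_0^{-1}(X_s^B)\}\,1_{\{t<\sigma_\tau^B\}}\bigr]$, which it does by H\"older's inequality with exponents $p\in(3/2,3)$ together with \eqref{PHI}, after first reducing to $T=t$ via Lemma~\ref{L3.3}. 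Your route is more elementary: the cruder bound $\Psi(x,r)\le c\,r\,\phi_0^{-1}(x)$ follows from the Doob identity and the standard Dirichlet second-moment estimate, and the resulting singularity $\phi_0^{-1}(X_s^B)$ is absorbed cleanly by the identity $\mu_0(\phi_0^{-2})=1$ and $L^2(\mu_0)$-contractivity, without any appeal to \eqref{PHI} or Lemma~\ref{L3.3}. The paper's sharper pointwise bound is not needed for the final estimate, so nothing is lost.

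For \eqref{L3.8+1} the paper uses the triangle inequality $\W_2(\mu_{t,r,r}^B,\mu_t^B)^2\le 2\W_2(\mu_{t,r,r}^B,\mu_{t,r}^B)^2+2\W_2(\mu_{t,r}^B,\mu_t^B)^2$ and then controls the first term by the total-variation bound $\W_2(\mu_{t,r,r}^B,\mu_{t,r}^B)^2\le\mathrm{diam}(M)^2\,r$, whereas you use joint convexity of $\W_2^2$ applied to the mixture $\mu_{t,r,r}^B=(1-r)\mu_{t,r}^B+r\mu_0$. Both are one-line reductions to \eqref{6L3.8+}; yours yields slightly better constants.
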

\begin{proof}
By Lemma \ref{L3.3}, it suffices to prove the case when $T=t$ instead of $T\ge t$. By the triangle inequality of $\W_2$, we have
\begin{equation}\label{1L3.8}
\W_2(\mu_{t,r,r}^B,\mu_t^B)^2\le 2\W_2(\mu_{t,r,r}^B,\mu_{t,r}^B)^2+2\W_2(\mu_{t,r}^B,\mu_t^B)^2,\quad r\in(0,1],\,t>0.
\end{equation}

To estimate $\W_2(\mu_{t,r,r}^B,\mu_{t,r}^B)$, since
\begin{equation*}\begin{split}\label{2L3.8}
\W_2(\mu,\nu)^2&:=\inf_{\pi\in\mathscr{C}(\mu,\nu)}\int_{M\times M}\rho(x,y)^2\pi(\d x,\d y)\\
& \le {\rm diam}(M)^2\inf_{\pi\in\mathscr{C}(\mu,\nu)}\pi(\{(x,y):x\neq y\})\\
&=\ff 1 2 {\rm diam}(M)^2\|\mu-\nu\|_{\rm var}^2,\quad \mu,\nu\in\scr{P},
\end{split}\end{equation*}
where $\|\cdot\|_{\rm var}$ is the total variation norm, we have
\begin{equation}\begin{split}\label{3L3.8}
&\W_2(\mu_{t,r,r}^B,\mu_{t,r}^B)^2\le \ff 1 2{\rm diam}(M)^2\|\mu_{t,r,r}^B-\mu_{t,r}^B\|_{\rm var}\\
&=\ff 1 2{\rm diam}(M)^2\mu_0(|\rho_{t,r,r}^B-\rho_{t,r}^B|)\le {\rm diam}(M)^2 r,\quad r\in(0,1],\,t>0.
\end{split}\end{equation}

Next, we consider $\W_2(\mu_{t,r}^B,\mu_t^B)$. For every $t>0$ and every $r\in(0,1]$, let
$$\pi(\d x,\d y):=\mu_t^B(\d x)p_r^0(x,y)\,\mu_0(\d y).$$
It is immediate to check that $\pi\in\mathscr{C}(\mu_t^B,\mu_{t,r}^B)$. Then
\begin{equation*}\label{4L3.8}
\W_2(\mu_t^B,\mu_{t,r}^B)^2\le\int_M\E^x[\rho(x,X_r^0)^2]\,\mu_t^B(\d x),\quad r\in(0,1],\,t>0.
\end{equation*}
According to the proof of \cite[Lemma 3.8]{eW2}, we have
\begin{equation*}\begin{split}\label{5L3.8}
&\E^\nu[1_{\{t<\si_\tau^B\}}\W_2(\mu_{t,r}^B,\mu_t^B)^2]\\
&\le\ff{c_1 r}t\int_0^t \E^\nu[1_{\{t<\si_\tau^B\}}\log\{1+\phi_0^{-1}(X_s^B)\}]\,\d s,\quad r\in(0,1],\,t>0
\end{split}\end{equation*}
for some constant $c_1>0$. By \eqref{2L3.2} with $f$ replaced by $\log(1+\phi_0^{-1})$, H\"{o}lder's inequality, \eqref{LT} and the contractivity of $P_t^0$ in $L^q(\mu_0)$ for all $q\geq1$, we get for every $p\in(\ff 3 2,3)$,
\begin{equation}\begin{split}\label{6L3.8}
&\E^\nu[1_{\{t<\si_\tau^B\}}\log\{1+\phi_0^{-1}(X_s^B)\}]\\
&=\int_0^\infty\!\!\!\int_0^\infty\e^{-\ll_0 (u+v)}\mu_0\big(h\phi_0^{-1} P_v^0(\log\{1+\phi_0^{-1}\}P_u^0\phi_0^{-1})\big)\,\P_{S_{t-s}^B}(\d u)\P_{S_s^B} (\d v)\\
&\le\|h\phi_0^{-1}\|_\infty\|\phi_0^{-1}\|_{L^p(\mu_0)}\|\log(1+\phi_0^{-1})\|_{L^{\ff p{p-1}}(\mu_0)}\int_0^\infty\!\!\!\int_0^\infty \e^{-\ll_0 (u+v)}\,\P_{S_{t-s}^B}(\d u)\P_{S_s^B}(\d v)\\
&=\e^{-B(\ll_0) t}\|h\phi_0^{-1}\|_\infty\|\phi_0^{-1}\|_{L^p(\mu_0)}\|\log(1+\phi_0^{-1})\|_{L^{\ff p{p-1}}(\mu_0)}\\
&\le\e^{-B(\ll_0) t}\|h\phi_0^{-1}\|_\infty\|\phi_0^{-1}\|_{L^p(\mu_0)}\|\phi_0^{-1}\|_{L^{\ff p {p-1}}(\mu_0)},\quad 0<s<t,
\end{split}\end{equation}
where the elementary inequality $\log (1+u)\leq u$ for every $u\geq0$ was applied in the last inequality.

Thus, by \eqref{SDHT}, \eqref{PHI} and \eqref{6L3.8}, we find constants $c_2,c_3>0$ such that
\begin{equation*}\begin{split}
&\E^\nu[\W_2(\mu_{t,r}^B,\mu_t^B)^2|t<\si_\tau^B]
=\ff{\E^\nu[1_{\{t<\si_\tau^B\}}\W_2(\mu_{t,r}^B,\mu_t^B)^2]}{\Pp^\nu(t<\si_\tau^B)}\\
&\leq c_2\frac{r\e^{B(\lambda_0)t}}{t\mu(\phi_0)\nu(\phi_0)}\int_0^t \E^\nu\big[1_{\{t<\si_\tau^B\}}\log\{1+\phi_0^{-1}(X_s^B)\}\big]\d s\\
&\leq c_3 r,\quad t\geq1,\,r\in(0,1],
\end{split}\end{equation*}
which finish the proof of \eqref{6L3.8+}. Combining this with \eqref{3L3.8} and \eqref{1L3.8}, we immediately prove \eqref{L3.8+1}.
\end{proof}

With these preparations, we are ready to prove Theorem \ref{T1.2} now. We divide the proof into two parts.
\begin{proof}[Proof of Theorem \ref{T1.2}(1)]
Assume that $\alpha\in(0,1]$, $B\in\BB^\alpha$ and $d<2(1+\alpha)$.

\textbf{Step 1}. By \eqref{EIG}, \eqref{equ-B} and \eqref{BR}, we have some constant $c_0>0$ such that
$$\sum_{m=1}^\infty\ff 1 {(\ll_m-\ll_0)[B(\ll_m)-B(\ll_0)]}\leq c_0\sum_{m=1}^\infty\frac{1}{ m^{2(1+\alpha)/d}}<\infty,$$
which proves the second inequality in \eqref{0T1.2}.

Next we prove the first inequality in \eqref{0T1.2} and the second assertion of Theorem \ref{T1.2}(1).

\textbf{Step 2}. Assume that $\nu=h\mu$ with $h\le C\phi_0$ for some constant $C>0$.

(i) Let $d<6\aa-2$  for some $\aa\in(\ff 1 2,1]$. For every $\zeta\in(1,\ff 4 d \wedge\ff{2(d+2)}{d^2+2(1-\aa)d})$, let $r_t:=t^{-\zeta}$, $t>0$.
Applying the triangle inequality of $\W_2$, we see that, for any $\vv>0$,
\begin{equation}\begin{split}\label{1P3.1}
\E^\nu[\W_2(\mu_t^B,\mu_0)^2|t<\si_\tau^B]&\le(1+\vv)\E^\nu[\W_2(\mu_{t,r_t,r_t}^B,\mu_0)^2|t<\si_\tau^B]\\
&\quad+(1+\vv^{-1})\E^\nu[\W_2(\mu_{t,r_t,r_t}^B,\mu_t^B)^2|t<\si_\tau^B],\quad t\geq1.
\end{split}\end{equation}

On the one hand, \eqref{L3.8+1} in Lemma \ref{L3.8} implies that
\begin{equation}\label{1P3.2-}
\E^\nu[\W_2(\mu_{t,r_t,r_t}^B,\mu_t^B)^2|t<\si_\tau^B]\le c_1 t^{-\zeta},\quad t\geq1,
\end{equation}
for some constant $c_1>0$.

On the other hand, using the fact that $\rho_{t,r_t,r_t}^B-1=(1-r_t)(\rho_{t,r_t}^B-1)$ for every $t\geq1$ and applying the inequality \eqref{W2UB}, we have
\begin{equation}\begin{split}\label{2P3.1}
t\E^\nu[\W_2(\mu_{t,r_t,r_t}^B,\mu_0)^2|t<\si_\tau^B]&\le t\E^\nu\left[\int_M\ff{|\nn (-\mathcal{L}_0)^{-1}(\rho_{t,r_t,r_t}^B-1)|^2}{\scr{M}(\rho_{t,r_t,r_t}^B,1)}\,\d \mu_0 \Big|t<\si_\tau^B\right]\\
&\leq(1-r_t)^2\big[ {\rm I}_1(t)+{\rm I}_2(t)\big],\quad t\geq1,
\end{split}\end{equation}
where we set, for every $t\geq1$,
\begin{align*}
&{\rm I}_1(t):=t\E^\nu\big[\mu_0\big(|\nn (-\mathcal{L}_0)^{-1}(\rho_{t,r_t}^B-1)|^2\big)|t<\si_\tau^B\big],\\
&{\rm I}_2(t):=t\E^\nu\big[\mu_0\big(|\nn (-\mathcal{L}_0)^{-1}(\rho_{t,r_t}^B-1)|^2|\mathscr{M}(\rho_{t,r_t,r_t}^B,1)^{-1}-1|\big)|t<\si_\tau^B\big].
\end{align*}
So, it remains to estimate both ${\rm I}_1$ and ${\rm I}_2$.

For ${\rm I}_1$, since $B\in \BB^\aa$ for some $\aa\in(\ff 1 2,1]$ and $d<6\aa-2$, Lemma \ref{L3.2}(1) yields that
\begin{equation*}\begin{split}
\left|{\rm I}_1(t)-\sum_{m=1}^\infty\ff {2\e^{-2(\ll_m-\ll_0)t^{-\zeta}}}{[B(\ll_m)-B(\ll_0)](\ll_m-\ll_0)}\right|&\le c_2 \Big\{ t^{-1+\frac{(d-2)^+}{2}\zeta}+\zeta1_{\{d=2\}}t^{-1}\log t \Big\},\quad t\geq1,
\end{split}\end{equation*}
for some constant $c_2>0$. Due to that $\frac{(d-2)^+}{2}\zeta<1$ for every $\zeta\in(1,\ff 4 d \wedge\ff{2(d+2)}{d^2+2(1-\aa)d})$, we have
\begin{equation}\begin{split}\label{2P3.1-I1}
\limsup_{t\to\infty}{\rm I}_1(t) \le\sum_{m=1}^\infty\ff 2{[B(\ll_m)-B(\ll_0)](\ll_m-\ll_0)}.
\end{split}\end{equation}
For ${\rm I}_2$, by H\"{o}lder's inequality, Lemma \ref{L3.6} with $p\in(1,\frac{2d+6}{3d+4-2\alpha}\wedge \ff {d+2}{d+1})$, Lemma \ref{L3.7} with $q=\ff p {p-1}$, and Lemma \ref{nu0}, we derive that
\begin{equation}\begin{split}\label{2P3.1-I2}
\limsup_{t\to\infty}{\rm I}_2(t)&\leq\limsup_{t\to\infty}\Big\{t\big(\E^\nu\big[\mu_0\big(|\nn(- \mathcal{L}_0)^{-1}(\rho_{t,r_t}^B-1)|^{2p}|t<\si_\tau^B\big)\big]\big)^{\ff 1 p}\\
&\quad \times \big(\E^\nu\big[\mu_0\big(|\mathscr{M}(\rho_{t,r_t,r_t}^B,1)^{-1}-1|^q\big)|t<\si_\tau^B\big]\big)^{\ff 1 q}\Big\}\\
&=0.
\end{split}\end{equation}

Gathering \eqref{2P3.1-I2}, \eqref{2P3.1-I1}  and \eqref{2P3.1} together, we derive that
\begin{equation}\begin{split}\label{2P3.1+}
\limsup_{t\to\infty}\big\{t\E^\nu[\W_2(\mu_{t,r_t,r_t}^B,\mu_0)^2|t<\si_\tau^B]\big\}\le
\sum_{m=1}^\infty\ff 2{[B(\ll_m)-B(\ll_0)](\ll_m-\ll_0)}.
\end{split}\end{equation}

Thus, by \eqref{1P3.1}, \eqref{1P3.2-} and \eqref{2P3.1+}, we arrive at
$$\limsup_{t\to\infty}\big\{t\sup_{T\ge t}\E^\nu[\W_2(\mu_t^B,\mu_0)^2|T<\si_\tau^B]\big\}\le\sum_{m=1}^\infty\ff 2 {(\ll_m-\ll_0)[B(\ll_m)-B(\ll_0)]}.$$

(ii) Let
$d\geq6\alpha-2$ for some $\aa\in(0,1]$. For every $\zeta\in(1,\ff 6{d+2-6\aa})$, let $r_t:=t^{-\zeta}$, $t>0$. By \eqref{1P3.1}, we have
\begin{equation}\begin{split}\label{I21}
\E^\nu[\W_2(\mu_t^B,\mu_0)^2|t<\si_\tau^B]&\le2\E^\nu[\W_2(\mu_{t,r_t}^B,\mu_0)^2|t<\si_\tau^B]\\
&\quad+2\E^\nu[\W_2(\mu_{t,r_t}^B,\mu_t^B)^2|t<\si_\tau^B],\quad t\ge1.
\end{split}\end{equation}
According to \eqref{6L3.8+}, there exists a constant $c_3>0$ such that
\begin{equation}\label{I22}
t\E^\nu[\W_2(\mu_{t,r_t}^B,\mu_t^B)^2|t<\si_\tau^B]\le c_3 t^{1-\zeta},\quad t\ge 1.
\end{equation}
Since $d\geq 6\alpha-2$,
by \eqref{EIG}, \eqref{W2UB-ledoux} and \eqref{0L3.2}, we have for every $\gg>\ff{d+2-6\aa}6,$
\begin{equation}\begin{split}\label{I23}
&t\E^\nu[\W_2(\mu_{t,r_t}^B,\mu_0)^2|t<\si_\tau^B]\le 4t\E^\nu[\mu_0(|\nabla(-\mathcal{L}_0)^{-1}(\rho_{t,r_t}^B-1)|^2)|t<\si_\tau^B]\\
&\le\sum_{m=1}^\infty\ff {c_4}{(\ll_m-\ll_0)[B(\ll_m)-B(\ll_0)]}+c_4t^{\gg\zeta-1},\quad t\geq1,
\end{split}\end{equation}
for some constant $c_4>0$. By
\eqref{I21}, \eqref{I22}, \eqref{I23} and Lemma \ref{L3.3}, letting $t\to\infty$, we find a constant $c_5>0$ such that
\begin{equation}\begin{split}\label{I24}
\limsup_{t\to\infty}\Big\{t\sup_{T\ge t}\E^\nu[\W_2(\mu_t^B,\mu_0)^2|T<\si_\tau^B]\Big\}\le \sum_{m=1}^\infty\ff {c_5}{(\ll_m-\ll_0)[B(\ll_m)-B(\ll_0)]}.
\end{split}\end{equation}

\textbf{Step 3}. In general, let $t>\varepsilon>0$. Consider
$$\mu_t^{B,\vv}:=\ff 1 {t-\vv}\int_\vv^t \delta_{X_s^B}\d s.$$
Then, we immediately derive the following inequality, i.e.,
\begin{equation}\begin{split}\label{3P3.1}
&\W_2(\mu_t^{B,\vv},\mu_t^B)^2\le{\rm diam}(M)^2\|\mu_t^B-\mu_t^{B,\vv}\|_{\rm var}\\
&\le {\rm diam}(M)^2\Big[\int_{\vv}^t\left(\ff 1{t-\vv}-\ff 1 t\right)\,\d s+\ff{1}{t}\int_0^\vv\d s\Big]\\
&=2{\rm diam}(M)^2\vv t^{-1}.
\end{split}\end{equation}

Let $\nu_\vv^B=h_\vv^B\mu$ with
$$h_\vv^B(y)=\ff 1{\Pp^\nu(\vv<\si_\tau^B)}\int_M {p}_\vv^{D,B}(x,y)\,\nu(dx),
\quad y\in M.$$
From \eqref{EIG}, \eqref{SDHK}, \eqref{EIG0UB}, \eqref{SDHT-L},  \eqref{BR} and \eqref{equ-B}, it is easy to see that
\begin{equation*}\begin{split}
h_\vv^B(y)&\leq \frac{\|\phi_0\|_\infty}{\nu(\phi_0)}\phi_0(y)\int_M\Big(\phi_0(x)+\sum_{m=1}^\infty\e^{-[B(\lambda_m)-B(\lambda_0)]\varepsilon}
\phi_m(x)(\phi_m\phi_0^{-1})(y)\Big)\,\nu(\d x)\\
&\leq c_6\phi_0(y)\Big(1+\sum_{m=1}^\infty\frac{m^{(d+1)/d}}{\e^{c_7 \varepsilon m^{2\alpha/d}}}\Big)<\infty,\quad \varepsilon>0,\,y\in M,
\end{split}\end{equation*}
for some constants $c_6,c_7>0$. By the Markov property,
\begin{equation*}\begin{split}
&\E^\nu[1_{\{t<\si_\tau^B\}}\W_2(\mu_t^{B,\vv},\mu_0)^2]
=\E^\nu\big\{1_{\{\vv<\si_\tau^B\}}\E^{X_\vv^B}[1_{\{t-\vv<\si_\tau^B\}}\W_2(\mu_{t-\vv}^{B},\mu_0)^2]\big\}\\
&=\Pp^\nu(\vv<\si_\tau^B)\E^{\nu_\vv^B}[1_{\{t-\vv<\si_\tau^B\}}\W_2(\mu_{t-\vv}^{B},\mu_0)^2]\\
&=\Pp^\nu(\vv<\si_\tau^B)\Pp^{\nu_\vv^B}(t-\vv<\si_\tau^B)\E^{\nu_\vv^B}[\W_2(\mu_{t-\vv}^{B},\mu_0)^2|t-\vv<\si_\tau^B].
\end{split}\end{equation*}
By \eqref{SDHK} and \eqref{SDHT}, it is easy to verify
$$\lim_{t\to\infty}\ff{\Pp^{\nu_\vv^B}(t-\vv<\si_\tau^B)\Pp^\nu(\vv<\si_\tau^B)}{\Pp^\nu(t<\si_\tau^B)}=1.$$
Thus, if $d<6\aa-2$ for some $\aa\in(\ff 1 2,1]$, then by part (i) in \textbf{Step 2}, we deduce that
\begin{equation}\begin{split}\label{3P3.1+}
&\limsup_{t\to\infty}\left\{t\E^\nu[\W_2(\mu_t^{B,\vv},\mu_0)^2|t<\si_\tau^B]\right\}\\
&=\limsup_{t\to\infty}\ff{\Pp^{\nu_\vv^B}(t-\vv<\si_\tau^B)\Pp^\nu(\vv<\si_\tau^B)}{\Pp^\nu(t<\si_\tau^B)}
\left\{t\E^{\nu_\vv^B}[\W_2(\mu_{t-\vv}^B,\mu_0)^2|t-\vv<\si_\tau^B]\right\}\\
&\le\sum_{m=1}^\infty\ff 2 {[B(\ll_m)-B(\ll_0)](\ll_m-\ll_0)}.
\end{split}\end{equation}

Therefore, according \eqref{3P3.1+}, \eqref{3P3.1} and the triangle inequality of $\W_2$, we have
\begin{equation*}\begin{split}
&\limsup_{t\to\infty}\left\{t\E^\nu[\W_2(\mu_t^B,\mu_0)^2|t<\si_\tau^B]\right\}\\
&\leq \limsup_{t\to\infty}\Big\{(1+\vv^{\ff 1 2})t\E^\nu[\W_2(\mu_t^{B,\varepsilon},\mu_0)^2|t<\si_\tau^B] \\
&\quad+ (1+\vv^{-\ff 1 2})t\E^\nu[\W_2(\mu_t^B,\mu_t^{B,\varepsilon})^2|t<\si_\tau^B]\Big\}\\
&\le(1+\vv^{\ff 1 2})\sum_{m=1}^\infty\ff 2 {[B(\ll_m)-B(\ll_0)](\ll_m-\ll_0)}+c_8\vv(1+\vv^{-\ff 1 2}),\quad \vv\in(0,1),
\end{split}\end{equation*}
for some constant $c_8>0$.  Letting $\vv\to 0$, we derive the the inequality in \eqref{0T1.2} with $c=1$.

Finally, by a similar argument in \textbf{Step 3} above, we can prove that \eqref{I24} also holds for every initial distribution $\nu\in\mathscr{P}_0$ when $d\ge 6\aa-2$ for some $\alpha\in(0,1]$.
\end{proof}

\begin{proof}[Proof of (2) and (3) in Theorem \ref{T1.2}]
Assume that $B\in\BB^\aa$ for some $\aa\in(0,1]$ and $\nu\in\mathscr{P}_0$. Let $d\ge 2(1+\aa)$. By \eqref{3P3.1}, it suffices to prove the desired estimates for $\mu_t^{B,1}$ replacing $\mu_t^B$. Due to this, we may assume that $\nu=h\mu$ with $\|h\phi_0^{-1}\|_\infty<\infty$.

By \eqref{0L3.3}
and \eqref{W2UB-ledoux}, for any $\gamma>\ff{d+2-6\aa} 6$, we find a constant $c>0$ such that
\begin{equation*}\begin{split}
&t\E^\nu[\W_2(\mu_{t,r}^B,\mu_0)^2|T<\si_\tau^B]\\
&\le c\Big(r^{-\ff{d-2(1+\aa)}{2}}+1_{\{d=2(1+\aa)\}}\log r^{-1}+t^{-1}r^{-\gamma}\Big),\quad T\ge t\ge 1,\,r\in(0,1).
\end{split}\end{equation*}
Then, by \eqref{6L3.8+} and the triangle inequality of $\W_2$, we can find a function $a:(\ff{d+2-6\aa}6,\infty)\to(0,\infty)$ such that, for every $\gamma>\ff{d+2-6\aa}6$,
\begin{equation}\begin{split}\label{4P3.1}
\E^\nu[\W_2(\mu_t^B,\mu_0)^2|T<\si_\tau^B]&\leq 2\E^\nu[\W_2(\mu_{t,r}^B,\mu_0)^2|T<\si_\tau^B] +2 \E^\nu[\W_2(\mu_t^B,\mu_{t,r}^B)^2|T<\si_\tau^B]\\
&\le a(\gamma)\Big(t^{-1}r^{-\ff{d-2(1+\aa)}{2}}+t^{-1}1_{\{d=2(1+\aa)\}}\log r^{-1}\\
&\quad +t^{-2}r^{-\gamma}+r\Big),\quad T\ge t\ge 1,\,r\in(0,1).
\end{split}\end{equation}

(i) Let $d>2(1+\aa)$. Taking $\gamma=\ff{7d-18\aa-4} {12}$ and $r=t^{-\ff 2{d-2\aa}}$ for any $t>1$ in \eqref{4P3.1}, we have
$$\sup_{T\ge t}\E^\nu[\W_2(\mu_t^B,\mu_0)^2|T<\si_\tau^B]\le c_2\Big(t^{-\frac{2}{d-2\alpha}}+t^{-\frac{5d-6\alpha+4}{6(d-2\alpha)}}\Big)\leq c_3 t^{-\frac{2}{d-2\alpha}},\quad t\ge 2,$$
for some constants $c_2,c_3>0$, which leads to the desired result in Theorem \ref{T1.2}(2).

(ii) Let $d=2(1+\aa)$. Taking $\gamma=\ff 1 2$ and $r=t^{-1}$ for any $t>1$ in \eqref{4P3.1}, we obtain
$$\sup_{T\ge t}\E^\nu[\W_2(\mu_t^B,\mu_0)^2|T<\si_\tau^B]\le c_1t^{-1}\log t,\quad t\ge 2,$$
for some constant $c_1>0$, which proves Theorem \ref{T1.2}(3).
\end{proof}

\section{Lower bounds: proof of Theorem \ref{T1.1}}

In this section, we follow closely \cite[Section 4]{eW2}. The overall idea is contained in Lemma \ref{W2-LB} below. In each of the following lemmas,  we assume additionally the condition that $\nu\in\mathscr{P}_0$ and $B\in\mathbf{B}$
 without mentioning it every time.

Recall that $\mu_{t,r}^B=\rho_{t,r}^B\mu_0$, $t,r>0$. By the same approach, taking $f=(-\mathcal{L}_0)^{-1}(\rho_{t,r}^B-1)$ instead of the original $f$ in the proof of \cite[Lemma 4.2]{eW2}, we can established the next lemma. The detailed proof is omitted here.
\begin{lem}\label{W2-LB}
There exists some constant $c>0$ such that, for every $t,r>0$,
$$\W_2(\mu_{t,r}^B,\mu_0)^2\geq  \mu_0\big(|(-\mathcal{L}_0)^{-1}(\rho_{t,r}^B-1)|^2\big)-c\|\rho_{t,r}^B-1\|_\infty^{7/3}\big(1+\|\rho_{t,r}^B-1\|^{1/3}_\infty\big).$$
\end{lem}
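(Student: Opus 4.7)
The lemma is a quantitative form of the heuristic $\W_2(h\mu_0,\mu_0)^2\approx \int|\nabla(-\mathcal{L}_0)^{-1}(h-1)|^2\,\d\mu_0$, dual to the upper bound \eqref{W2UB-ledoux}. Following the strategy laid out in \cite[Lemma 4.2]{eW2}, the plan is to test quadratic Kantorovich duality against the Poisson potential of $\rho_{t,r}^B-1$. Put
\begin{equation*}
f:=(-\mathcal{L}_0)^{-1}(\rho_{t,r}^B-1).
\end{equation*}
Since $\mu_0$ is the invariant probability measure of $(P_t^0)_{t\ge 0}$ and $\mathcal{L}_0$ is self-adjoint in $L^2(\mu_0)$, integration by parts yields the basic spectral identity
\begin{equation*}
\int_M f\,(\rho_{t,r}^B-1)\,\d\mu_0=\int_M f(-\mathcal{L}_0 f)\,\d\mu_0=\int_M|\nabla f|^2\,\d\mu_0,
\end{equation*}
which produces the target quantity to appear on the right-hand side of the asserted inequality.

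Next I would invoke the quadratic Kantorovich duality
\begin{equation*}
\tfrac12\W_2(\mu_{t,r}^B,\mu_0)^2 \ge \int_M(-f)\,\d\mu_{t,r}^B-\int_M(-f)^c\,\d\mu_0,\qquad (-f)^c(x):=\sup_{y\in M}\big\{-f(y)-\tfrac12\rr(x,y)^2\big\},
\end{equation*}
and expand $(-f)^c$ by a second-order Taylor development of $-f$ along the minimising geodesic from $x$ to the maximiser (which lies within distance $\le\|\nabla f\|_\infty$ of $x$, provided $\|\nabla f\|_\infty$ is small enough). A direct computation produces the pointwise expansion
\begin{equation*}
(-f)^c(x)=-f(x)+\tfrac12|\nabla f(x)|^2+R(x),\qquad |R(x)|\le C\,\|\Hess f\|_\infty\,|\nabla f(x)|^2,
\end{equation*}
the curvature of $M$ contributing only higher-order corrections. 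Integrating, substituting the spectral identity above, and comparing $\int|\nabla f|^2\,\d\mu_{t,r}^B$ with $\int|\nabla f|^2\,\d\mu_0$ (the discrepancy being controlled by $\|\rho_{t,r}^B-1\|_\infty\int|\nabla f|^2\,\d\mu_0$) would deliver
\begin{equation*}
\W_2(\mu_{t,r}^B,\mu_0)^2 \ge \int_M|\nabla f|^2\,\d\mu_0-C\,\|\Hess f\|_\infty\big(1+\|\rho_{t,r}^B-1\|_\infty\big)\int_M|\nabla f|^2\,\d\mu_0.
\end{equation*}

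The remaining and most delicate task is to repackage the correction term into the asserted form $c\|\rho_{t,r}^B-1\|_\infty^{7/3}\bigl(1+\|\rho_{t,r}^B-1\|_\infty^{1/3}\bigr)$. For this I would use the heat-kernel representation $f=\int_0^\infty P_s^0(\rho_{t,r}^B-1)\,\d s$ together with the pointwise gradient and Hessian estimates on $(P_s^0)_{s>0}$ furnished by intrinsic ultracontractivity (cf.\ \eqref{IU0}--\eqref{PQ0}). A Gagliardo--Nirenberg type interpolation then yields the sharp companion estimates $\int|\nabla f|^2\,\d\mu_0\lesssim\|\rho_{t,r}^B-1\|_\infty^2$ and $\|\Hess f\|_\infty\lesssim\|\rho_{t,r}^B-1\|_\infty^{1/3}$, whose product matches the advertised $\|\rho_{t,r}^B-1\|_\infty^{7/3}=\|\rho_{t,r}^B-1\|_\infty^{2+1/3}$; the extra factor $1+\|\rho_{t,r}^B-1\|_\infty^{1/3}$ absorbs the regime in which $\|\rho_{t,r}^B-1\|_\infty$ is not small and the $c$-conjugate expansion becomes correspondingly less accurate.

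The principal obstacle, and the reason the authors simply refer to \cite[Lemma 4.2]{eW2} rather than writing the proof out, is that $\mathcal{L}_0=\mathcal{L}+2\nabla\log\phi_0$ carries a drift singular as $\phi_0\to 0$ near $\partial M$; the gradient and Hessian bounds on $P_s^0$, and hence on $f$, must therefore be established \emph{uniformly up to the boundary}. Once that technical input is secured, the argument of \cite{eW2} transfers verbatim to the present subordinated setting, since $f$ depends on the subordinator $(S_t^B)_{t\ge 0}$ only through the scalar quantity $\|\rho_{t,r}^B-1\|_\infty$ and through the invariant structure of $\mu_0$, both of which are intrinsic to the Doob semigroup $(P_t^0)_{t\ge 0}$ and unaffected by the time-change.
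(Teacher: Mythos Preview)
Your strategy---Kantorovich duality with the potential $f=(-\mathcal{L}_0)^{-1}(\rho_{t,r}^B-1)$ followed by a second-order expansion of the $c$-transform---is exactly the approach the paper invokes via \cite[Lemma~4.2]{eW2}, and you are right that the subordination plays no role: the lemma depends only on the density $\rho_{t,r}^B$ and on the intrinsic structure of $(P_t^0)_{t\ge0}$.

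Two points need repair. First, the sign in the duality. With your choice of $-f$ and your formula $(-f)^c=-f+\tfrac12|\nabla f|^2+R$, substituting $\int f(\rho_{t,r}^B-1)\,\d\mu_0=\int|\nabla f|^2\,\d\mu_0$ yields
\[
\tfrac12\W_2^2\ \ge\ -\int f(\rho_{t,r}^B-1)\,\d\mu_0-\tfrac12\int|\nabla f|^2\,\d\mu_0-\int R\,\d\mu_0
\ =\ -\tfrac32\int|\nabla f|^2\,\d\mu_0-\int R\,\d\mu_0,
\]
which is useless. One must take $f$ itself as the test function, so that $f^c\approx -f-\tfrac12|\nabla f|^2$ and the two contributions combine to $+\tfrac12\int|\nabla f|^2\,\d\mu_0$.

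Second, and more seriously, your route to the exponent $7/3$ does not stand. The map $g\mapsto\Hess\,(-\mathcal{L}_0)^{-1}g$ is \emph{linear}, so a bound of the form $\|\Hess f\|_\infty\lesssim\|\rho_{t,r}^B-1\|_\infty^{1/3}$ with constant independent of the density is impossible: replace $\rho-1$ by $\lambda(\rho-1)$ and the left side scales like $\lambda$ while the right scales like $\lambda^{1/3}$. A Gagliardo--Nirenberg interpolation could only deliver a sublinear power by borrowing from a higher-order norm of $f$ that is bounded uniformly in $t,r$, and no such bound is available (the integral $\int_0^1\|\Hess P_s^0\|_{L^\infty\to L^\infty}\,\d s$ already diverges). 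The fractional exponents $7/3$ and $8/3$ in \cite[Lemma~4.2]{eW2} do not come from the simple product $\|\Hess f\|_\infty\cdot\int|\nabla f|^2\,\d\mu_0$; they emerge from a more careful bookkeeping of the Hopf--Lax remainder in which the distance to the maximiser is controlled by $\|\nabla f\|_\infty\lesssim\|\rho-1\|_\infty$ (via $\|\nabla P_s^0\|_{L^\infty\to L^\infty}\lesssim s^{-1/2}e^{-cs}$) and higher-order terms along the geodesic are tracked explicitly. Your outline has the right architecture but the wrong mechanism for the final exponent.
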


From Lemma \ref{W2-LB}, it is clear that, in order to obtain a sharp lower bound on $\W_2(\mu_{t,r}^B,\mu_0)^2$, we should deal with $\mu_0\big(|(-\mathcal{L}_0)^{-1}(\rho_{t,r}^B-1)|^2\big)$ and $\|\rho_{t,r}^B-1\|_\infty$ carefully next.

\begin{lem}\label{L4.3}
Let $\nu=h\mu$ with $\|h\phi_0^{-1}\|_\infty<\infty$. Then, for every $r>0$, there exists a constant $c_r>0$ such that
\begin{equation*}\begin{split}
&\sup_{T\ge t}\left|t\E^{\nu_0}\big[\mu_0\big(|\nn (-\mathcal{L}_0)^{-1}(\rho_{t,r}^B-1)|^2\big)|T<\si_\tau^B\big] -2\sum_{m=1}^\infty\ff{\e^{-2(\ll_m-\ll_0)r}}{(\ll_m-\ll_0)[B(\ll_m)-B(\ll_0)]}\right|\\
&\le c_r t^{-1},\quad t\ge 1.
\end{split}\end{equation*}
\end{lem}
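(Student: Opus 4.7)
My plan is to follow the framework of the proof of Lemma~\ref{L3.2}, exploiting that the master inequality \eqref{8L3.2} was derived for an arbitrary $\nu=h\mu$ with $\|h\phi_0^{-1}\|_\infty<\infty$, and hence applies to $\nu_0$ in particular. The problem then reduces to bounding the right-hand side of \eqref{8L3.2} (applied to $\nu_0$) by $c_r t^{-1}$, uniformly in $T\ge t\ge 1$. The first sum $\sum_m \e^{-2(\ll_m-\ll_0)r}/\{(\ll_m-\ll_0)[B(\ll_m)-B(\ll_0)]\}$ is immediately finite for any fixed $r>0$: by \eqref{EIG}, \eqref{BR}, and \eqref{equ-B} one has $\ll_m-\ll_0\gtrsim m^{2/d}$ and $B(\ll_m)-B(\ll_0)\gtrsim m^{2\aa/d}$, so $\e^{-2(\ll_m-\ll_0)r}$ dominates and this sum is a finite $r$-dependent constant.

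The nontrivial piece is the second sum, where I aim at a per-$m$ bound of the form
$$\sup_{T\ge t}\frac{\big|\int_0^t\d s_1\int_{s_1}^t \Xi_T(s_1,s_2)\,\d s_2\big|}{\Pp^{\nu_0}(T<\si_\tau^B)}\le C(r)\,\e^{(\ll_m-\ll_0)r}\,m^{\kappa},$$
for some $\kappa>0$. The factor $\e^{(\ll_m-\ll_0)r}$ would be absorbed by the $\e^{-2(\ll_m-\ll_0)r}$ already present in \eqref{8L3.2}, leaving $\e^{-(\ll_m-\ll_0)r}$, which annihilates $m^{\kappa}/(\ll_m-\ll_0)$ on summation via \eqref{EIG}. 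Since $h\phi_0^{-1}=\mu(\phi_0)^{-1}$ is constant under $\nu_0$, as observed in the proof of Lemma~\ref{L3.2}(2) one has $J_1=J_2=0$, so $\Xi_T$ collapses to the contribution of $J_3$ alone.

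To control $J_3$ I would employ the smoothing trick from the proof of Lemma~\ref{L3.2}(2): substitute $\phi_m\phi_0^{-1}=\e^{(\ll_m-\ll_0)r/2}P^0_{r/2}(\phi_m\phi_0^{-1})$ so that $|\phi_m\phi_0^{-1}|^2$ acquires the prefactor $\e^{(\ll_m-\ll_0)r}$ and can be bounded in $L^p(\mu_0)$ via \eqref{PQ0} applied to $P^0_{r/2}:L^2(\mu_0)\to L^{2p}(\mu_0)$, while $(P_w^0-\mu_0)\phi_0^{-1}$ is bounded in $L^{p/(p-1)}(\mu_0)$ via \eqref{PQ0} and \eqref{PHI} for some $\theta\in[1,3)$. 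The crucial advantage of having $r>0$ \emph{fixed}---in contrast with the $r\downarrow 0$ regime of Lemma~\ref{L3.2}---is that we are now free to take $p$ arbitrarily close to $\theta/(\theta-1)$ and $\theta$ arbitrarily close to $3$, which forces the exponent $(d+2)[\theta-(\theta-1)p]/(2\aa\theta p)$ of the resulting $(T-s_2)$-singularity strictly below $1$. Integrating against $\Pp_{S_{T-s_2}^B}(\d w)$ via $\int(1\wedge w)^{-\kappa'}\,\Pp_{S_r^B}(\d w)\le K(1+r^{-\kappa'/\aa})$ and then over $s_2\in(s_1,t]$ then yields an integrable bound all the way up to $s_2=T$; finally dividing by $\Pp^{\nu_0}(T<\si_\tau^B)\gtrsim \e^{-B(\ll_0)T}$ (by \eqref{SDHT-L}) delivers the target per-$m$ estimate. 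The main obstacle, compared with the parallel step in the proof of Lemma~\ref{L3.2}(1), is precisely that without the restriction $d<6\aa-2$ any bound uniform in $r$ would blow up; the smoothing trick just described removes this restriction at the cost of a finite $r$-dependent constant, which is exactly what Lemma~\ref{L4.3} permits.
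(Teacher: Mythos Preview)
Your approach is essentially workable, but as written it carries an unnecessary dependence on $B\in\BB^\aa$ that is not assumed in Section~4: Lemma~\ref{L4.3} must hold for every $B\in\BB$, since it feeds into Theorem~\ref{T1.1}(1). You invoke \eqref{BR}, \eqref{equ-B}, and the subordinator moment bound $\int(1\wedge w)^{-\kappa'}\,\Pp_{S_r^B}(\d w)\le K(1+r^{-\kappa'/\aa})$, all of which require $B\in\BB^\aa$. The first two are harmless---finiteness of $\sum_m \e^{-2(\ll_m-\ll_0)r}/\{(\ll_m-\ll_0)[B(\ll_m)-B(\ll_0)]\}$ follows from the crude bound $B(\ll_m)-B(\ll_0)\ge B(\ll_1)-B(\ll_0)>0$ and \eqref{EIG} alone. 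The subordinator moment bound, however, is genuinely unavailable for general $B\in\BB$ (consider $B(\ll)=1-(1+\ll)^{\aa-1}$ from Remark~1.2). The easy fix is to take $p=\theta/(\theta-1)$ \emph{exactly} (say $\theta=p=2$) rather than ``arbitrarily close'': then $\theta-(\theta-1)p=0$, the $(1\wedge w)$-singularity disappears entirely, and the $w$-integration reduces to the Laplace transform \eqref{LT} with no $\aa$-dependence.

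Even after this repair, the paper's route is considerably more direct. Rather than specializing to $\nu_0$ and applying the smoothing trick to $J_3$, the paper bounds all three $J_i$ for general $\nu=h\mu$ using only the crude estimates $\|P_u^0-\mu_0\|_{L^\infty\to L^\infty}$ and $\|P_w^0-\mu_0\|_{L^1\to L^1}$ from \eqref{PI0}, which carry \emph{no} small-time blow-up. The price is an explicit factor $\|\phi_m\phi_0^{-1}\|_\infty^2\lesssim m^{(d+2)/d}$ via \eqref{EIG0UB}; but since $r>0$ is fixed, this polynomial is absorbed by $\e^{-2(\ll_m-\ll_0)r}$ on summation. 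No smoothing, no H\"older exponents to tune, no restriction to $\nu_0$---and the argument covers the general $\nu$ that the application in the proof of Theorem~\ref{T1.1}(1) actually needs. Your smoothing route trades this simplicity for machinery that was designed to survive $r\downarrow 0$ in Lemma~\ref{L3.2}(2), a constraint that is simply absent here.
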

\begin{proof}
Let ${\rm J_i}$, $i=1,2,3$, be defined in \eqref{5L3.2}. By \eqref{PI0}, \eqref{EIG0UB} and $\|\phi_m\phi_0^{-1}\|_{L^2(\mu_0)}=1$ for all $m\in\mathbb{N}$, we find a constant $c_1>0$ such that, for every $u,v,w>0$ and every $m\in\mathbb{N}$,
\begin{align*}
|{\rm J_1}(u,v,w)|&\le
\|h\phi_0^{-1}\|_\infty\|P_u^0-\mu_0\|_{L^\infty(\mu_0)\rightarrow L^\infty(\mu_0)}\|\phi_m\phi_0^{-1}\|_\infty^2\\
&\quad\times\|P_w^0-\mu_0\|_{L^1(\mu_0)\rightarrow L^1(\mu_0)}\|\phi_0^{-1}\|_{L^1(\mu_0)}\\
&\le c_1\|\phi_m\phi_0^{-1}\|_\infty^2 \e^{-(\ll_1-\ll_0)(u+w)},\\
|{\rm J_2}(u,v,w)|&\le\|\phi_0\|_\infty\e^{-(\ll_m-\ll_0)v}\|h\phi_0^{-1}\|_\infty\|P_u^0-\mu_0\|_{L^\infty(\mu_0)\rightarrow L^\infty(\mu_0)}\\
&\le c_1\e^{-(\ll_1-\ll_0)(u+v)},\\
|{\rm J_3}(u,v,w)|&\le\|\phi_0\|_\infty\e^{-(\ll_m-\ll_0)v}\|\phi_m\phi_0^{-1}\|_\infty^2\|P_w^0-\mu_0\|_{L^1(\mu_0)\rightarrow L^1(\mu_0)}\|\phi_0^{-1}\|_{L^1(\mu_0)}\\
&\le c_1\|\phi_m\phi_0^{-1}\|_\infty^2\e^{-(\ll_1-\ll_0)(v+w)}.
\end{align*}
Let $r>0$ be fixed. Substituting these estimates into \eqref{8L3.2} (whose proof does not need further assumptions on $B$) and applying \eqref{EIG0} and \eqref{EIG0UB}, we find a constant $c_r>0$ such that the desired result holds.
\end{proof}

Since $\rho_{t,r}^B=\frac{1}{t}\int_0^tp_r^0(X_s^B,\cdot)$\,\d s by \eqref{MTR} and $\|\rho_{t,r}^B\|_{L^\infty(\mu_0)}\leq\|p_t^0\|_{L^\infty(\mu_0\times\mu_0)}<\infty$ by \eqref{PI0}, $t,r>0$, by the same proof of \cite[Lemma 4.5]{eW2}, we obtain the following lemma. The details are left to the reader.
\begin{lem}\label{LpBD}
Assume that $\nu=h\mu$ and $\|h\phi_0^{-1}\|_\infty<\infty$. Then, for every $q\geq2$ and every $r>0$, there exists some constant $c_{q,r}>0$ depending on $q,r$ such that
$$\|\nabla(-\mathcal{L}_0)^{-1}(\rho_{t,r}^B-1)\|_{L^{2q}(\mu_0)}\leq c_{q,r},\quad t>0.$$
\end{lem}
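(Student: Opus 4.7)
The plan is to follow the blueprint already used in the proof of Lemma~\ref{L3.6} but to avoid all dependence on $t$ by exploiting the fact that, for fixed $r>0$, the density $\rho_{t,r}^B$ is uniformly bounded in $t$. Indeed, since $\rho_{t,r}^B(y)=\tfrac{1}{t}\int_0^t p_r^0(X_s^B,y)\,\d s$, the spectral expansion \eqref{HK0} together with \eqref{EIG0UB} shows that $\|p_r^0\|_\infty<\infty$ for every $r>0$, so $\|\rho_{t,r}^B-1\|_\infty\le C_r$ uniformly in $t$. This reduces the task to a deterministic $L^{2q}$ bound on $\nabla(-\mathcal{L}_0)^{-1}g$ for functions $g$ satisfying $\mu_0(g)=0$ and $\|g\|_\infty\le C_r$.

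Using $(-\mathcal{L}_0)^{-1}g=\int_0^\infty P_s^0 g\,\d s$ (valid since $\mu_0(g)=0$) and Minkowski's inequality,
\begin{equation*}
\|\nabla(-\mathcal{L}_0)^{-1}g\|_{L^{2q}(\mu_0)}\le \int_0^\infty \|\nabla P_s^0 g\|_{L^{2q}(\mu_0)}\,\d s,
\end{equation*}
so it suffices to show that the integrand is integrable with bounds depending only on $q,r,C_r$. I will split the integral at $s=1$. For $s\in(0,1]$, apply the gradient estimate \cite[(2.21)]{eW2} (already invoked in the proof of Lemma~\ref{L3.6}) with exponent $p>1$ and parameter $\varsigma\in(0,3/(2q))$, and bound $(P_s^0|g|^p)^{1/p}\le\|g\|_\infty\le C_r$ pointwise. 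Then H\"older together with $\|\phi_0^{-\varsigma}\|_{L^{2q}(\mu_0)}<\infty$ (from \eqref{PHI}) gives
\begin{equation*}
\|\nabla P_s^0 g\|_{L^{2q}(\mu_0)}\le \frac{c\,C_r\,\|\phi_0^{-\varsigma}\|_{L^{2q}(\mu_0)}}{\sqrt{s}},\qquad s\in(0,1],
\end{equation*}
which is integrable near $0$.

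For $s>1$, write $P_s^0 g=P_1^0(P_{s-1}^0 g)$, apply the same gradient estimate at time $1$, and use the intrinsic ultracontractive decay \eqref{PI0} (with $p=\infty$) to obtain $\|P_{s-1}^0 g\|_\infty\le \alpha_2 \e^{-(\lambda_1-\lambda_0)(s-1)}\|g\|_\infty$. This yields
\begin{equation*}
\|\nabla P_s^0 g\|_{L^{2q}(\mu_0)}\le c'\,C_r\,\|\phi_0^{-\varsigma}\|_{L^{2q}(\mu_0)}\,\e^{-(\lambda_1-\lambda_0)(s-1)},\qquad s>1,
\end{equation*}
which is integrable on $(1,\infty)$. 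Adding the two pieces gives the claimed uniform bound $c_{q,r}$.

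The only place where care is needed is the choice of $\varsigma$: it must be small enough that $\phi_0^{-\varsigma}$ lies in $L^{2q}(\mu_0)$, which by \eqref{PHI} requires $2q\varsigma<3$, and this is always achievable for any prescribed $q\ge 2$. No spectral sum over $m$ is needed (in contrast to Lemma~\ref{L3.6}), since the uniform $L^\infty$ bound on $\rho_{t,r}^B-1$ completely replaces the delicate $L^2$-spectral arguments used there.
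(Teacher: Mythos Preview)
Your proposal is correct and takes essentially the same approach as the paper. The paper's own proof simply observes that $\|\rho_{t,r}^B\|_{L^\infty(\mu_0)}\le\|p_r^0\|_{L^\infty(\mu_0\times\mu_0)}<\infty$ uniformly in $t$ and then defers to the proof of \cite[Lemma~4.5]{eW2}; your argument reconstructs precisely that deferred step, using the same ingredients (the uniform $L^\infty$ bound, the gradient estimate \cite[(2.21)]{eW2}, Minkowski, the exponential decay \eqref{PI0}, and the integrability of $\phi_0^{-\varsigma}$ from \eqref{PHI}).
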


As for the estimate on $\|\rho_{t,r}^B-1\|_\infty$, we derive the following result.
\begin{lem}\label{L4.2}
For any $r>0$ and any $\nu=h\mu$ with $\|h\phi_0^{-1}\|_\infty<\infty$, there exists a constant $c_r>0$ depending on $r$ such that
$$\sup_{T\ge t}\E^\nu[\|\rho_{t,r}^B-1\|_\infty^4|T<\si_\tau^B]\le c_r t^{-2},\quad t\ge 1.$$
\end{lem}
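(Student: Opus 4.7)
My strategy is to use the smoothing property of $P_r^0$ at fixed $r>0$ to trade the sup-norm for an $L^2(\mu_0)$ norm, then pass to the spectral expansion of $\rho_{t,r/2}^B-1$ so as to reduce everything to a fourth-moment estimate on each Fourier coefficient $\psi_m^B(t)$. First I would apply Lemma \ref{L3.3} to $\xi:=\|\rho_{t,r}^B-1\|_\infty^4$, which is $\sigma(X_s^B:\,s\le t)$-measurable, in order to replace $\sup_{T\ge t}\E^\nu[\cdot|T<\si_\tau^B]$ by a constant times $\E^\nu[\cdot|t<\si_\tau^B]$. Then, by Lemma \ref{nu0}, it suffices to handle the initial distribution $\nu=\nu_0$, which is convenient because $h\phi_0^{-1}$ is constant for $\nu_0$, killing several cross terms, as in the proof of Lemma \ref{L3.2}(2).

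Using the semigroup property of $(P_t^0)$ together with $\mu_0(p_{r/2}^0(x,\cdot))=1$, one checks the identity
\begin{equation*}
\rho_{t,r}^B-1=(P_{r/2}^0-\mu_0)\rho_{t,r/2}^B,
\end{equation*}
so that \eqref{PQ0} with $p=2$ and $q=\infty$ gives a constant $C_r>0$ with $\|\rho_{t,r}^B-1\|_\infty\le C_r\|\rho_{t,r/2}^B-1\|_{L^2(\mu_0)}$. Since $\{\phi_m\phi_0^{-1}\}_{m\ge 1}$ is an orthonormal basis of the zero-mean subspace of $L^2(\mu_0)$, from \eqref{MTR} we obtain $\|\rho_{t,r/2}^B-1\|_{L^2(\mu_0)}^2=\sum_{m\ge 1}a_m|\psi_m^B(t)|^2$ with $a_m:=\e^{-(\ll_m-\ll_0)r}$. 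A weighted Cauchy--Schwarz, using $\sum_m a_m<\infty$ from \eqref{EIG}, then yields
\begin{equation*}
\|\rho_{t,r}^B-1\|_\infty^4\le C_r^4\Big(\sum_{m\ge 1}a_m\Big)\sum_{m\ge 1}a_m|\psi_m^B(t)|^4,
\end{equation*}
and taking the conditional expectation reduces the problem to showing $\E^{\nu_0}[|\psi_m^B(t)|^4|t<\si_\tau^B]\le K_m t^{-2}$ for a sequence $(K_m)$ with $\sum_m a_m K_m<\infty$.

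To obtain this fourth-moment bound I would write $|\psi_m^B(t)|^4$ as $24\,t^{-4}$ times an iterated integral over the simplex $\{0<s_1<\cdots<s_4<t\}$ of products of $\tilde\phi_m(X_{s_i}^B)$, with $\tilde\phi_m:=\phi_m\phi_0^{-1}$, then apply the Markov property at each $s_i$ and the representation $\tilde P_s^B f=\phi_0\int_0^\infty\e^{-\ll_0 v}P_v^0(f\phi_0^{-1})\,\Pp_{S_s^B}(\d v)$, much as in the proof of Lemma \ref{L3.5}(ii). Expanding $\tilde\phi_m^2$ in the basis $\{\tilde\phi_k\}_{k\ge 0}$ and isolating the $k=0$ contribution (with coefficient $\mu_0(\tilde\phi_m^2)=1$) produces a main part bounded by a constant times $\e^{-B(\ll_0)t}\e^{-[B(\ll_m)-B(\ll_0)](s_2-s_1+s_4-s_3)}$, while the remainder terms involving $(P_u^0-\mu_0)$ acquire extra exponential decay via \eqref{IU0} and \eqref{PQ0}. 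Integrating along the simplex gives two gap factors $[B(\ll_m)-B(\ll_0)]^{-1}$ and an overall $t^2$ from the three long subintervals; dividing by $\Pp^{\nu_0}(t<\si_\tau^B)$, which by \eqref{SDHT} is asymptotic to $\e^{-B(\ll_0)t}\mu(\phi_0)\nu_0(\phi_0)$, yields $K_m\le c\|\tilde\phi_m\|_\infty^2[B(\ll_m)-B(\ll_0)]^{-2}$, growing at most polynomially in $m$ by \eqref{EIG0UB} and \eqref{EIG}; the weight $a_m$ then secures summability. The hardest part is executing this fourth-moment calculation under only the hypothesis $B\in\mathbf{B}$, rather than $B\in\mathbf{B}^\alpha$: the slack offered by the exponential weight $a_m$, which is available because $r>0$ is fixed, lets one get away with the non-singular bounds \eqref{PI0} on $(P_s^0-\mu_0)$, thereby avoiding the singular factors $(1\wedge v)^{-\kappa}$ against $\Pp_{S^B}$ that forced the $\mathbf{B}^\alpha$ hypothesis in Lemma \ref{L3.5}(ii).
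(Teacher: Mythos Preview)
Your strategy is correct and yields the lemma, but it diverges from the paper's proof in two places.

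\emph{Passing from $\|\cdot\|_\infty$ to the spectral coefficients.} The paper does not go through the $L^2\to L^\infty$ smoothing step you propose; instead it uses the cruder triangle inequality
\[
\|\rho_{t,r}^B-1\|_\infty\le\sum_{m\ge 1}\e^{-(\ll_m-\ll_0)r}|\psi_m^B(t)|\,\|\phi_m\phi_0^{-1}\|_\infty,
\]
followed by H\"older with exponents $(4/3,4)$ against the weight $\e^{-(\ll_m-\ll_0)r}\|\phi_m\phi_0^{-1}\|_\infty^{4/3}$. Your route via $\rho_{t,r}^B-1=(P_{r/2}^0-\mu_0)\rho_{t,r/2}^B$ and \eqref{PQ0} is a clean alternative and shows more transparently why the constant may depend on $r$.

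\emph{The fourth moment of $\psi_m^B(t)$.} The paper does \emph{not} expand the full four-fold time integral. It reuses the bootstrap inequality \eqref{8L3.5} from Lemma~\ref{L3.5} (whose derivation only needs Fubini and the Markov property, hence is valid for any $B\in\mathbf{B}$), together with a crude version of the bound on $\E^{\nu_0}[1_{\{r_1<\si_\tau^B\}}|\eta_s(r_1,r_2)|^2]$ obtained by replacing the singular $(1\wedge u)^{-\kappa}$ factors by $1$ via \eqref{PI0}. This gives directly
\[
\E^{\nu_0}\big[|\psi_m^B(t)|^4\,\big|\,t<\si_\tau^B\big]\le c\,\|\phi_m\phi_0^{-1}\|_\infty^{4}\,t^{-2}.
\]
Your direct simplex computation also works, but two remarks: the remainder coming from $(P_{u_3}^0-\mu_0)$ acting on $\tilde\phi_m g$ carries a factor $\|\tilde\phi_m\|_\infty^{4}$ (not $\|\tilde\phi_m\|_\infty^{2}$), so the honest outcome is $K_m\lesssim\|\tilde\phi_m\|_\infty^{4}$ rather than $\|\tilde\phi_m\|_\infty^{2}[B(\ll_m)-B(\ll_0)]^{-2}$; and after integrating out the two decaying gaps you are left with only \emph{two} free length-$O(t)$ variables (giving $t^2$), not three. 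Neither point affects the conclusion, since the weight $a_m=\e^{-(\ll_m-\ll_0)r}$ absorbs any polynomial growth in $m$ by \eqref{EIG} and \eqref{EIG0UB}. Your closing observation---that the fixed $r>0$ is precisely what lets one replace the singular bounds of Lemma~\ref{L3.5} by the non-singular \eqref{PI0}, and thus dispense with the hypothesis $B\in\mathbf{B}^\alpha$---is exactly the point.
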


\begin{proof} Let $r>0$ be fixed. Recall that $\nu_0=\frac{\phi_0}{\mu(\phi_0)}\mu$.
By Lemma \ref{L3.3} and Lemma \ref{nu0}, 
it is enough to prove the assertion for $\nu=\nu_0$ and $T=t$ replacing $T\ge t$, i.e.,
\begin{equation}\label{1L4.2}
\E^{\nu_0}[\|\rho_{t,r}^B-1\|_\infty^4|t<\si_\tau^B]\le c_rt^{-2},\quad t\ge 1,
\end{equation}
for some constant $c_r>0$.

For each $m\in\mathbb{N}$ and every $0<r_1<r_2<s$, let
$$\eta_s(r_1,r_2):=\E^{\nu_0}\big[1_{\{s<\sigma_\tau^B\}}(\phi_m\phi_0^{-1})(X_{r_1}^B)(\phi_m\phi_0^{-1})(X_{r_2}^B)|\sigma(X_r^B:\ 0\leq r\leq r_1)\big].$$
Similar as the proof of \eqref{I} (which still works for $B\in\mathbf{B}$), we find some constant $c_1>0$ such that
\begin{equation*}\begin{split}
&\E^{\nu_0}[1_{\{r_1<\si_\tau^B\}}|\eta_s(r_1,r_2)|^2]\leq c_1\|\phi_m\phi_0^{-1}\|_\infty^4 \e^{-B(\lambda_0)(2s-r_1)}\e^{-2[B(\lambda_m)-B(\lambda_0)](r_2-r_1)},
\end{split}\end{equation*}
By \eqref{8L3.5} (whose proof only depends on the Markov property and Fubini's theorem) and \eqref{SDHT-L},
$$\E^{\nu_0}[|\psi_m^B(t)|^4|t<\si_\tau^B]=\ff{\E^{\nu_0}[|\psi_m^B(t)|^4 1_{\{t<\si_\tau^B\}}]}{\Pp^{\nu_0}(t<\si_\tau^B)}\le c_2\|\phi_m\phi_0^{-1}\|_\infty^4 t^{-2},\quad m\in\mathbb{N},\,t\geq1,$$
for some constant $c_2>0$. Combining this with \eqref{MTR}, H\"{o}lder's inequality, Fubini's theorem and \eqref{equ-B},  we have
\begin{equation*}\begin{split}
&\E^{\nu_0}[\|\rho_{t,r}^B-1\|_\infty^4|t<\si_\tau^B]=\E^{\nu_0}\left[\Big\|\sum_{m=1}^\infty \e^{-(\lambda_m-\lambda_0)r}\psi_m^B(t)\phi_m\phi_0^{-1}\Big\|_\infty^4\Big| t<\sigma^B_\tau\right]\\
&\le\left(\sum_{m=1}^\infty \e^{-(\ll_m-\ll_0)r}\|\phi_m\phi_0^{-1}\|_\infty^{\ff 4 3}\right)^3\sum_{m=1}^\infty \e^{-(\ll_m-\ll_0)r} \E^{\nu_0}[|\psi_m^B(t)|^4| t<\si_\tau^B]\\
&\le c_2t^{-2}\left(\sum_{m=1}^\infty \e^{-(\ll_m-\ll_0)r}\|\phi_m\phi_0^{-1}\|_\infty^{\ff 4 3}\right)^3 \sum_{m=1}^\infty\e^{-(\ll_m-\ll_0)r}\|\phi_m\phi_0^{-1}\|_\infty^4,\quad t\geq1.
\end{split}\end{equation*}
 By making use of \eqref{EIG} and \eqref{EIG0UB}, we immediately prove the desired inequality \eqref{1L4.2} for some constant $c_r>0$.
\end{proof}

Now we are ready to prove the second main result.
\begin{proof}[Proof of Theorem \ref{T1.1}]
(1) Let $B\in\mathbf{B}$. According to \eqref{3P3.1}, it suffices to prove \eqref{1T1.1} for the case when $\nu=h\mu$ with $\|h\phi_0^{-1}\|_\infty<\infty$. By Lemma \ref{W2-LB}, we have some constant $c_1>0$ such that
\begin{equation}\begin{split}\label{1P4.1}
&t\E^\nu[\W_2(\mu_{t,r}^B,\mu_0)^2|T<\si_\tau^B]\ge t\E^{\nu}[1_{\{\|\rho_{t,r}^B-1\|_\infty\le\vv\}}\W_2(\mu_{t,r}^B,\mu_0)^2|T<\si_\tau^B]\\
&\ge t\E^\nu\big[1_{\{\|\rho_{t,r}^B-1\|_\infty\le\vv\}}\mu_0\big(|\nn (-\mathcal{L}_0)^{-1}(\rho_{t,r}^B-1)|^2\big)|T<\si_\tau^B\big]-c_1\vv^2\\
&= t\E^\nu\big[\mu_0\big(|\nn (-\mathcal{L}_0)^{-1}(\rho_{t,r}^B-1)|^2\big)|T<\si_\tau^B\big]-c_1\vv^2\\
&\quad-t\E^\nu\big[1_{\{\|\rho_{t,r}^B-1\|_\infty>\vv\}}\mu_0\big(|\nn (-\mathcal{L}_0)^{-1}(\rho_{t,r}^B-1)|^2\big)|T<\si_\tau^B\big],\quad \vv\in(0,1),\,r>0,T\ge t\geq1.
\end{split}\end{equation}
By  H\"{o}lder's inequality, Lemma \ref{LpBD} with $p=3$,  Chebyshev's inequality,  and Lemma \ref{L4.2}, we find some constants $c_2,c_3>0$ such that
\begin{equation}\begin{split}\label{1P4.1+}
&t\E^\nu\big[1_{\{\|\rho_{t,r}^B-1\|_\infty>\vv\}}\mu_0\big(|\nn (-\mathcal{L}_0)^{-1}(\rho_{t,r}^B-1)|^2\big)|T<\si_\tau^B\big]\\
&\le c_2 t\{\Pp^\nu(\|\rho_{t,r}^B-1\|_\infty>\vv|T<\si_\tau^B)\}^{\ff 2 3}\\
&\le c_2 t\vv^{-\ff 8 3}\{\E^\nu(\|\rho_{t,r}^B-1\|_\infty^4|T<\si_\tau^B)\}^{\ff 2 3}\\
&\le c_3\vv^{-\ff 8 3}t^{-\ff 1 3},\quad r,\varepsilon>0,\, T\ge t\geq1.
\end{split}\end{equation}
Plugging  \eqref{1P4.1+} into \eqref{1P4.1} and applying Lemma \ref{L4.3}, for every $r>0$, we have some constant $c_4>0$ such that
\begin{equation*}\begin{split}
&t\E^\nu[\W_2(\mu_{t,r}^B,\mu_0)^2|T<\si_\tau^B]\\
&\ge t\E^\nu\big[\mu_0\big(|\nn (-\mathcal{L}_0)^{-1}(\rho_{t,r}^B-1\big)|^2)|T<\si_\tau^B\big]-\vv_t\\
&\ge 2\sum_{m=1}^\infty \ff{\e^{-2(\ll_m-\ll_0)r}}{(\ll_m-\ll_0)[B(\ll_m)-B(\ll_0)]}-\vv_t-c_4 t^{-1},\quad T\ge t\ge 1,
\end{split}\end{equation*}
where we set
$$\vv_t:=\inf_{\vv\in(0,1)}\big(c_1\vv^2+c_3\vv^{-\ff 8 3}t^{-\ff 1 3}\big).$$
It is easy to see that $\vv_t\to 0 $ as $t\to\infty.$ Hence,
$$\liminf_{t\to\infty}\inf_{T\ge t}\{t\E^\nu[\W_2(\mu_{t,r}^B,\mu_0)^2|T<\si_\tau^B]\}\ge 2\sum_{m=1}^\infty\ff{\e^{-2(\ll_m-\ll_0)r}}{(\ll_m-\ll_0)[B(\ll_m)-B(\ll_0)]},\quad r>0.$$
Combining this with \cite[Lemma 4.6]{eW2}, i.e., there exist constants $\kappa_1,\kappa_2>0$ such that, for any Borel probability measures $\nu_1,\nu_2$ on $\mathring{M}$,
\begin{equation}\label{W-contra}
\W_2(\nu_1 P_u^0,\nu_2P_u^0)\leq \kappa_1\e^{\kappa_2 u}\W_2(\nu_1,\nu_2),\quad u\geq0,
\end{equation}
and recalling that $\mu_{t,r}^B=\mu_t^B P_r^0$, we find a constant $c_5>0$ such that
\begin{equation}\begin{split}\label{W2-r}
&\liminf_{t\to\infty}\inf_{T\ge t}\{t\E^\nu[\W_2(\mu_t^B,\mu_0)^2|T<\si_\tau^B]\}\\
&\ge 2c_5^{-1}\e^{-c_5 r}
\sum_{m=1}^\infty\ff{\e^{-2(\ll_m-\ll_0)r}}{(\ll_m-\ll_0)[B(\ll_m)-B(\ll_0)]},\quad r>0.
\end{split}\end{equation}
Therefore, letting $r\to 0$ in \eqref{W2-r}, by Fatou's lemma, we arrive at
$$\liminf_{t\to\infty}\inf_{T\ge t}\{t\E^\nu[\W_2(\mu_t^B,\mu_0)^2|T<\si_\tau^B]\}\ge c_5^{-1}\sum_{m=1}^\infty\ff 2 {(\ll_m-\ll_0)[B(\ll_m)-B(\ll_0)]},$$
which proves \eqref{1T1.1} with $c=c_5^{-1}$.

In addition, if $\partial M$ is convex, then we can take $\kappa_1=1$ in \eqref{W-contra} by \cite[Lemma 4.6]{eW2},
and hence, we can take $c_5=1$ in \eqref{W2-r}, which immediately leads to
$$\liminf_{t\to\infty}\inf_{T\ge t}\{t\E^\nu[\W_2(\mu_t^B,\mu_0)^2|T<\si_\tau^B]\}\ge \sum_{m=1}^\infty\ff 2 {(\ll_m-\ll_0)[B(\ll_m)-B(\ll_0)]}.$$
The second assertion in Theorem \ref{T1.1}(1) is proved.

(2) Let $\alpha\in(0,1]$ and $B\in\BB_\alpha$. It suffices to prove \eqref{2T1.1} for every $p\in(0,\alpha)$, since $(\W_{q_1})^{1/q_1}\leq \W_1\leq\W_{q_2}$ for every $0<q_1\leq1\leq q_2<\infty$. Let $t\ge 1$ and $N\in \N$ such that $N\ge c t$ for some constant $c\geq1$.
Set $\bar{\mu}_N^B:=\ff 1 N \sum_{i=1}^N\delta_{X_{t_i}^B}$, where $t_i:=\ff{(i-1)t}{N}$, $1\le i\le N+1$.
Take
$$\pi_0(\d x,\d y)=\ff 1 t\sum_{i=1}^N\int_{t_i}^{t_{i+1}}\dd_{X_s^B}(\d x)\dd_{X_{t_i}^B}(\d y)\d s.$$
It is easy to see that $\pi_0\in\scr{C}(\mu_t^B,\bar{\mu}_N^B)$.
Then
\begin{equation}\label{wp-u}
\W_p(\mu_t^B,\bar{\mu}_N^B)\leq \int_{M\times M}\rho(x,y)^p\,\pi_0(\d x,\d y)=  \ff 1 t
\sum_{i=1}^N\int_{t_i}^{t_{i+1}}\rho(X_s^B,X_{t_i}^B)^p\,\d s.
\end{equation}
By the Markov property,
\begin{equation}\label{2P4.1}\E^\nu[\rho(X_{t_i}^B,X_s^B)^p1_{\{T<\si_\tau^B\}}]
=\E^\nu[1_{\{t_i<\si_\tau^B\}}P_{s-t_i}^{D,B}\{\rho(X_{t_i}^B,\cdot)^p P_{T-s}^{D,B}1\}(X_{t_i}^B)],\quad t_i<s<T.
\end{equation}

By \eqref{SDSG0}, \eqref{DPQ} and \eqref{LT}, it is immediate to obtain that
\begin{equation*}\label{sub-markov}
P_t^{D,B}1\le c_1\e^{-B(\ll_0)t},\quad t\ge 0,
\end{equation*}
for some constant $c_1>0$. From part $({\rm b})$ in the proof of \cite[Proposition 4.1]{eW2}, we see that
$$P_u^0[\rho(x,\cdot)^2\phi_0^{-1}](x)\leq c_2 u\log[1+\phi_0^{-1}(x)],\quad x\in M,\,u>0,$$
for some constant $c_2>0$. Hence, by \eqref{SDSG0}, \eqref{R} and H\"{o}lder's inequality, there exist constants $c_3,c_4>0$ such that, for every $x\in M$ and every $s\in(t_i,t_{i+1})$,
\begin{equation}\begin{split}\label{3P4.1}
&P_{s-t_i}^{D,B}[\rho(x,\cdot)^p P_{T-s}^{D,B}1](x)\\
&\le c_1\e^{-B(\ll_0)(T-s)}P_{s-t_i}^{D,B}[\rho(x,\cdot)^p](x)\\
&=c_1\e^{-B(\ll_0)(T-s)}\int_0^\infty P_u^D[\rho(x,\cdot)^p](x)\,\P_{S_{s-t_i}^B}(\d u)\\
&\le c_1\e^{-B(\ll_0)(T-s)}\int_0^\infty\big\{P_u^D[\rho(x,\cdot)^2](x)\big\}^{\ff p 2}\,\P_{S_{s-t_i}^B}(\d u)\\
&\le c_1\e^{-B(\ll_0)(T-s)}\int_0^\infty\big\{\phi_0P_u^0[\phi_0^{-1}\rho(x,\cdot)^2](x)\big\}^{\ff p 2}\,\P_{S_{s-t_i}^B}(\d u)\\
&\leq c_3 \e^{-B(\ll_0)(T-s)} \big(\phi_0\log[1+\phi_0^{-1}]\big)^{\frac{p}{2}}(x)\int_0^\infty u^{\ff p 2}\,\P_{S_{s-t_i}^B}(\d u)\\
&\le c_4\e^{-B(\ll_0)(T-s)}(s-t_i)^{\ff p {2\aa}}\big(\phi_0\log[1+\phi_0^{-1}]\big)^{\frac{p}{2}}(x),
\end{split}\end{equation}
where in the last inequality we used the fact that
$$\int_0^\infty u^{\ff p 2}\,\P_{S_{s-t_i}^B}(\d u)\leq C(s-t_i)^{\frac{p}{2\alpha}},\quad t_i<s<t_{i+1},$$
for some constant $C>0$, since $B\in\BB_\alpha$ and $0<s-t_i\le t_{i+1}-t_i=  t/N\le 1/c\leq1$ (see e.g. \cite[Section 4.2]{eWW} for a detailed calculation).

By \eqref{2P4.1} and \eqref{3P4.1}, applying \eqref{SDSG0}, Fubini's theorem, the invariance of $P_t^0$ w.r.t. $\mu_0$ and the elementary inequality $\log(1+u)\leq u$ for any $u\geq0$ , we find a constant $c_5>0$ such that, for every $s\in(t_i,t_{i+1})$,
\begin{equation}\begin{split}\label{dist-u}
&\E^\nu[\rho(X_{t_i}^B,X_s^B)^p 1_{\{T<\si_\tau^B\}}]\le c_4\e^{-B(\ll_0)(T-s)}(s-t_i)^{\ff p {2\aa}}\nu\big(P_{t_i}^{D,B}([\phi_0\log(1+\phi_0^{-1})]^{\frac{p}{2}})\big)\\
&=c_4\e^{-B(\ll_0)(T-s)}(s-t_i)^{\ff p {2\aa}} \int_0^\infty \e^{-\lambda_0 u}\mu_0\big(h\phi_0^{-1} P_u^0[\phi_0^{\frac{p}{2}-1}\log^{\frac{p}{2}}(1+\phi_0^{-1})]\big)\,\P_{S_{t_i}^B}(\d u)\\
&\le c_4\|h\phi_0^{-1}\|_\infty\mu(\phi_0)(s-t_i)^{\ff p {2\aa}}\e^{-B(\ll_0)(T-s)}\e^{-B(\lambda_0)t_i}\leq c_5 (s-t_i)^{\ff p {2\aa}}\e^{-B(\ll_0)T},
\end{split}\end{equation}
where in the last inequality we used the fact that $s-t_i\le  1/c$. Thus, by \eqref{wp-u}, \eqref{dist-u} and \eqref{SDHT-L}, there exists a constant $c_6>0$ such that
\begin{equation}\label{2P4.1+}
\E^\nu[\W_p(\mu_t^B,\bar{\mu}_N^B)|T<\si_\tau^B]\le c_6(tN^{-1})^{\ff p {2\aa}},\quad N\in\mathbb{N},\,t\geq 1.
\end{equation}

Since $M$ is compact, there exists a constant $c_7>0$ such that
$$\mu(\{y\in M: \rr(x,y)^p \le r\}) \le c_7r^{\ff d p},\quad r>0,\, x\in M,$$
which, by \cite[Proposition~4.2]{eK} (see also \cite[Corollary 12.14]{GL2000} for the Euclidean space case), implies that
\begin{equation}\label{2P4.1++}
\W_p(\bar{\mu}_N^B,\mu_0)\geq c_8 N^{-\frac{p}{d}},\quad N\in\mathbb{N},\,t\geq 1,
\end{equation}
for some constant $c_8>0$.

Therefore, \eqref{2P4.1++} and \eqref{2P4.1+} together with the triangular inequality of $\W_p$ yield that
\begin{align*}
\inf_{T\ge t}\E^\nu[\W_p (\mu_0,\mu_t^B)|T<\si_\tau^B]  &\geq \inf_{T\ge t} \E^\nu[\W_p(\mu_0,\bar{\mu}_N^B)|T<\si_\tau^B] \\
&\quad-\sup_{T\ge t} \E^\nu[\W_p(\mu_t^B,\bar{\mu}_N^B)|T<\si_\tau^B]\\
&\geq c_8 N^{-\frac{p}{d}} - c_6(tN^{-1})^{\ff p{2\aa}}, \quad t\geq 1,\, N\in \mathbb N.
\end{align*}
By taking  $N:=\inf\{n\in\mathbb N: n\ge \dd t^{\ff{d}{d-2\aa}}\}$  for some $\dd>0$, we can find a constant $c_9>0$ such that, for large enough $t>1,$
$$\inf_{T\ge t} \E^\nu[\W_p(\mu_0,\mu_t^B)|T<\si_\tau^B]  \ge c_9 t^{-\ff p{d-2\aa}},$$
which proves \eqref{2T1.1}.
\end{proof}

\subsection*{Acknowledgment}
Financial support by the National Natural Science Foundation of China (No. 11831014) is gratefully acknowledged.  The authors would like to thank Prof. Feng-Yu Wang for helpful comments.

\section*{Appendix}
In this part, we present some additional details on the proof of Lemma \ref{L3.2} for the convenience of the reader. We use the same notations as those in the proof of Lemma \ref{L3.2}.

\begin{proof}[Proof of \eqref{8L3.2}]
For each $m\in\mathbb{N}$, for every $r\in(0,1]$ and every $t>0$,  let
$$b_m:=\frac{\e^{-2(\lambda_m-\lambda_0)r}}{\lambda_m-\lambda_0},$$
and set
$${\rm E}:=t\E^\nu\big[\mu_0\big(|\nabla (-\mathcal{L}_0)^{-1}(\rho_{t,r}^B-1)|^2\big)|T<\sigma_\tau^B\big].$$
Then, by \eqref{1L3.2}, \eqref{3L3.2} with $f=\phi_m\phi_0^{-1}$ and \eqref{5L3.2}, we have
\begin{equation*}\begin{split}
{\rm E}&=\frac{2}{t\P^{\nu}(T<\sigma_\tau^B)}\sum_{m=1}^\infty b_m\int_0^t\! \int_{s_1}^t\!\int_0^\infty\!\int_0^\infty\!\int_0^\infty \e^{-\lambda_0 (u+v+w)}\big[\nu(\phi_0)\mu(\phi_0)\e^{-(\lambda_m-\lambda_0)v}\\
&\quad+({\rm J_1}+{\rm J_2}+{\rm J_3})(u,v,w)\big]\,\Pp_{S_{T-s_2}^B}(\d w)\Pp_{S_{s_2-s_1}^B}(\d v)\Pp_{S_{s_1}^B}(\d u)\d s_2 \d s_1\\
&=:\frac{2}{t\P^{\nu}(T<\sigma_\tau^B)}({\rm E}_1+{\rm E}_2),
\end{split}\end{equation*}
where
\begin{equation*}\begin{split}
{\rm E}_1&=\nu(\phi_0)\mu(\phi_0)\sum_{m=1}^\infty b_m\\
&\quad \times \int_0^t\! \int_{s_1}^t\!\int_0^\infty\!\int_0^\infty\!\int_0^\infty \e^{-\lambda_0 (u+w)}\e^{-\lambda_m v}
\,\Pp_{S_{T-s_2}^B}(\d w)\Pp_{S_{s_2-s_1}^B}(\d v)\Pp_{S_{s_1}^B}(\d u)\d s_2\d s_1,
\end{split}\end{equation*}
and
$${\rm E}_2=\sum_{m=1}^\infty b_m\int_0^t\d s_1 \int_{s_1}^t\Xi_T(s_1,s_2)\,\d s_2,$$
where $\Xi_T(s_1,s_2)$ is defined by \eqref{Xi}. It is clear, by \eqref{LT}, we  have
\begin{equation*}\begin{split}
&\int_0^t\!  \int_{s_1}^t\!\int_0^\infty\!\int_0^\infty\!\int_0^\infty \e^{-\lambda_0 (u+w)}\e^{-\lambda_m v}
\,\Pp_{S_{T-s_2}^B}(\d w)\Pp_{S_{s_2-s_1}^B}(\d v)\Pp_{S_{s_1}^B}(\d u)\d s_2\d s_1\\
&=\e^{-B(\lambda_0)T} \int_0^t\!\!  \d s_1\int_{s_1}^t \e^{[B(\lambda_m)-B(\lambda_0)]s_1}\e^{-[B(\lambda_m)-B(\lambda_0)]s_2}\,\d s_2\\
&=\e^{-B(\lambda_0)T}\Big(\frac{t}{B(\lambda_m)-B(\lambda_0)}-\frac{1-\e^{-[B(\lambda_m)-B(\lambda_0)]t}}{[B(\lambda_m)-B(\lambda_0)]^2}\Big),
\end{split}\end{equation*}
and hence
\begin{equation*}\begin{split}
{\rm E}_1&=\e^{-B(\lambda_0)T}\nu(\phi_0)\mu(\phi_0)\sum_{m=1}^\infty b_m\Big(\frac{t}{B(\lambda_m)-B(\lambda_0)}-\frac{1-\e^{-[B(\lambda_m)-B(\lambda_0)]t}}{[B(\lambda_m)-B(\lambda_0)]^2}\Big).
\end{split}\end{equation*}

Let $a_m:=B(\lambda_m)-B(\lambda_0)$, $m\in\mathbb{N}$, and
$$h_T:=\frac{\e^{-B(\lambda_0)T}\nu(\phi_0)\mu(\phi_0)}{\P^{\nu}(T<\sigma_\tau^B)},\quad T>0.$$
Then
\begin{equation*}\begin{split}
{\rm E}-2\sum_{m=1}^\infty \frac{b_m}{a_m}= \frac{2}{t\P^{\nu}(T<\sigma_\tau^B)} {\rm E}_2+ 2(h_T-1)\sum_{m=1}^\infty \frac{b_m}{a_m}
-\frac{2}{t}h_T\sum_{m=1}^\infty \frac{b_m}{a_m}\frac{1-\e^{-a_m t}}{a_m}.
\end{split}\end{equation*}
By \eqref{ST}, \eqref{SDHT-L} and \eqref{7L3.2}, there exists some constant $c_1>0$ such that
\begin{equation*}\begin{split}
|h_T-1|&=\frac{1}{\P^{\nu}(T<\sigma_\tau^B)}\left|\int_0^\infty \e^{-\lambda_0 s}\big[\mu(\phi_0)\nu(\phi_0)-
\nu(\phi_0P_s^0\phi_0^{-1})\big]\,\P_{S_T^B}(\d s)\right|\\
&\leq c_1\e^{-[B(\lambda_1)-B(\lambda_0)]T},\quad T>0,
\end{split}\end{equation*}
and
$$|h_T|\leq c_1,\quad T>0.$$
It is easy to see that
$$\frac{1-\e^{-a_m t}}{a_m}\leq c_2,\quad t>0,\,m\in\mathbb{N},$$
for some constant $c_2>0$, since $a_m\geq a_1>0$. Thus, we find a constant $c_3>0$ such that
\begin{equation*}\begin{split}
\left|{\rm E}-2\sum_{m=1}^\infty \frac{b_m}{a_m}\right|\leq  \frac{c_3}{t\P^\nu(T<\si_\tau^B)} |{\rm E}_2|
+ \frac{c_3}{t}\sum_{m=1}^\infty \frac{b_m}{a_m},\quad T\geq t>0,\,r\in(0,1],
\end{split}\end{equation*}
which implies \eqref{8L3.2} by taking the supremum on $T$ over $[t,\infty)$.
\end{proof}

\begin{proof}[Proof of \eqref{10L3.2+}]
Let $\beta:=\ff{d+2}{2\aa\theta}$. Note that $\beta\in(0,1)$ due to that $\theta\in(\frac{d+2}{2\aa},3)$
and $d<6\alpha-2$. Then there exists a constant $c_1>0$ such that
\begin{equation*}\begin{split}
&\int_{s_1}^t\e^{[B(\ll_1)-B(\ll_0)]s_2}[1+(T-s_2)^{-\beta}]\,\d s_2\le\int_{s_1}^t\e^{[B(\ll_1)-B(\ll_0)]s_2}[1+(t-s_2)^{-\beta}]\,\d s_2\\
&=\frac{\e^{[B(\ll_1)-B(\ll_0)]t}-\e^{[(B(\ll_1)-B(\ll_0)]s_1}}{B(\ll_1)-B(\ll_0)}+\int_0^{t-s_1}\e^{[B(\ll_1)-B(\ll_0)](t-u)}u^{-\beta}\,\d u\\
&\le\frac{\e^{[B(\ll_1)-B(\ll_0)]t}-\e^{[(B(\ll_1)-B(\ll_0)]s_1}}{B(\ll_1)-B(\ll_0)}
+\int_0^{\infty}\e^{[B(\ll_1)-B(\ll_0)](t-u)}u^{-\beta}\,\d u\\
&\le c_1\e^{[B(\ll_1)-B(\ll_0)]t},\quad T\geq t\ge s_1>0.
\end{split}\end{equation*}
Hence
\begin{equation}\begin{split}\label{A1}
&\int_0^t\d s_1\int_{s_1}^t \e^{-B(\ll_0)(s_2-s_1)}\e^{-B(\ll_1)s_1}\e^{-B(\ll_1)(T-s_2)}[1+(T-s_2)^{-\beta}]\,\d s_2\\
&=\e^{-B(\ll_1)T}\int_0^ t  \e^{-[B(\ll_1)-B(\ll_0)]s_1}\Big(\int_{s_1}^t\e^{[B(\ll_1)-B(\ll_0)]s_2}[1+(T-s_2)^{-\beta}]\,\d s_2\Big)\,\d s_1 \\
&\le c_2\e^{-B(\ll_1)T}\e^{[B(\ll_1)-B(\ll_0)]t},\quad T\ge t>0,
\end{split}\tag{A1}\end{equation}
for some constant $c_2>0$.

Similarly, we can find some constants $c_3,c_4>0$ such that
\begin{equation}\begin{split}\label{A2}
&\int_0^t\d s_1\int_{s_1}^t \e^{-B(\ll_0)(T-s_2)}\e^{-B(\ll_m)(s_2-s_1)}\e^{-B(\ll_1)s_1}\,\d s_2\\
&=\frac{\e^{-B(\ll_0)T}}{B(\ll_m)-B(\ll_0)}\int_0^t \e^{[B(\ll_m)-B(\ll_1)]s_1} \Big(\e^{-[B(\ll_m)-B(\ll_0)]s_1}-\e^{-[B(\ll_m)-B(\ll_0)]t}\Big)\,\d s_1\\
&\le\frac{\e^{-B(\ll_0)T}}{B(\ll_m)-B(\ll_0)} \int_0^t \e^{-[B(\ll_1)-B(\ll_0)]s_1}\,\d s_1\\
&\le c_3\e^{-B(\ll_0)T},\quad T\ge t>0,
\end{split}\tag{A2}\end{equation}
and
\begin{equation}\begin{split}\label{A3}
&\int_0^t\d s_1\int_{s_1}^t \e^{-B(\ll_0)s_1}\e^{-B(\ll_m)(s_2-s_1)}\e^{-B(\ll_1)(T-s_2)}\big[1+(T-s_2)^{-\beta}\big]\,\d s_2\\
&\leq\e^{-B(\ll_1)T}\int_0^t\int_{s_1}^t\e^{[B(\ll_1)-B(\ll_0)]s_1}\big[1+(t-s_2)^{-\beta}\big]\,\d s_2 \d s_1\\
&=\e^{-B(\ll_1)T}\int_0^t\e^{[B(\ll_1)-B(\ll_0)]s_1}(t-s_1)\,\d s_1\\
&\quad+\ff{\e^{-B(\ll_1)T}}{1-\beta}\int_0^t\e^{[B(\ll_1)-B(\ll_0)]s_1}(t-s_1)^{1-\beta}\,\d s_1\\
&\le c_4\e^{-B(\ll_1)T}\e^{[B(\ll_1)-B(\ll_0)]t},\quad T\ge t>0.
\end{split}\tag{A3}\end{equation}

Combining \eqref{9L3.2} and \eqref{10L3.2} together with \eqref{A1}, \eqref{A2} and \eqref{A3}, we have a constant $c_5>0$ such that
$$\e^{B(\ll_0) T}\int_0^t\d s_1\int_{s_1}^t\Xi_T(s_1,s_2)\,\d s_2\le c_5,\quad T\ge t>0,$$
which is \eqref{10L3.2+}.
\end{proof}

\end{document}